\def\SSigma_#1{\BS(#1)}
\def\({\left(}
\def\){\right)}
\def\l{\ell}
\def\Eu{\mathop{\rm Eu}\nolimits}
\def\tr{\mathop{\rm tr}\nolimits}
\def\lm{\lambda}
\def\C{\mathbb C}
\def\Z{\mathbb Z}
\def\Fold{\widetilde\Seq}
\def\Seq{\mathbf{Seq}}
\def\Top{\mathbf{Top}}
\def\GrMod{\mathbf{GrMod}}
\def\BS{\mathrm{BS}}
\def\Obj{\mathop{Obj}}
\def\Mor{\mathop{Mor}}
\def\<{\langle}
\def\>{\rangle}
\def\s{\mathbf s}
\renewcommand\phi{\varphi}
\def\bbeta{{\boldsymbol\beta}}
\def\im{\mathop{\rm im}}
\def\id{\mathrm{id}}
\renewcommand\epsilon{\varepsilon}
\def\Map{\mathop{\rm Map}}
\def\X{\mathcal X}
\def\suchthat{\mathbin{\rm |}}
\def\tH{\widetilde H}
\def\f{\mathbf f}
\def\ito{\stackrel\sim\to}
\def\pt{{\rm pt}}
\def\SL{\mathop{\rm SL}}
\def\PSL{\mathop{\rm PSL}}
\def\Stab{\mathop{\rm Stab\hspace{0.5pt}}\nolimits}
\newtheorem{theorem}{Theorem}[subsection]
\newtheorem{proposition}[theorem]{Proposition}
\newtheorem{lemma}[theorem]{Lemma}
\newtheorem{example}[theorem]{Example}
\newtheorem{corollary}[theorem]{Corollary}
\newtheorem{remark}[theorem]{Remark}
\newtheorem{definition}[theorem]{Definition}
\def\le{\leqslant}
\def\ge{\geqslant}
\renewcommand{\labelenumi}{{\rm\theenumi}}
\renewcommand{\theenumi}{{\rm(\arabic{enumi})}}
\def\={\equiv}
\def\cond#1{[#1]}
\def\2{{(2)}}
\def\ov#1#2{\genfrac{}{}{0pt}{}{#1}{#2}}
\title{Categories of Bott-Samelson varieties}
\author{Vladimir Shchigolev}
\thanks{Supported by the Russian Foundation for Basic Research grant no. 16-01-00756}
\email{shchigolev\_vladimir@yahoo.com}
\address{Financial University under the Government of the Russian Federation, 49 Leningradsky Prospekt, Moscow, Russia}
\begin{document}

\maketitle

\begin{abstract} We consider all Bott-Samelson varieties $\BS(s)$ for a fixed connected semisimple complex algebraic group with maximal torus $T$
as the class of objects of some category. The class of morphisms of this category is an extension of the class of canonical (inserting the neutral element)
morphisms $\BS(s)\hookrightarrow\BS(s')$, where $s$ is a subsequence of $s'$. Every morphism of the new category induces
a map between the $T$-fixed points but not necessarily between the whole varieties.

We construct a contravariant functor from this new category to
the category of graded $H^\bullet_T(\pt)$-modules coinciding on the objects with the usual functor $H_T^\bullet$
of taking $T$-equi\-variant cohomologies.

We also discuss the problem how to define a functor to the category of $T$-spaces
from a smaller subcategory. The exact answer is obtained for groups whose root systems have
simply laced irreducible components
by explicitly constructing morphisms between Bott-Samelson varieties (different from the canonical ones).
\end{abstract}


\section{Introduction}

Let $G$ be a connected semisimple complex algebraic group. Let $W$ be its Weyl group, $B$ be its Borel subgroup,
$T\subset B$ be its maximal torus and $\Pi$ be the set of simple reflections corresponding to $B$.
In this paper, we consider all complex varieties in the {\it metric topology}.
For any sequence
of simple reflections $s=(s_1,\ldots,s_r)$, we consider the {\it Bott-Samelson variety}
\begin{equation}\label{eq:BS}
\BS(s)=P_1\times\cdots\times P_r/B^r,
\end{equation}
where $P_i=B\cup Bs_iB$ is the parabolic subgroup for $s_i$ and $B^r$ acts as follows:
$$
(g_1,g_2,\ldots,g_r)\cdot(b_1,b_2,\ldots,b_r)=(g_1b_1,b_1^{-1}g_2b_2,\ldots,b_{r-1}^{-1}g_rb_r).
$$
We denote by $[g_1,\ldots,g_r]$ the point of $\BS(s)$ corresponding to the $r$-tuple $(g_1,\ldots,g_r)$.
It is well known that $\BS(s)$ is a smooth complex variety of dimension $r$.
As $T$ acts continuously on $\BS(s)$ via the first factor, we can consider the $T$-equivariant cohomology $H_T^\bullet(\BS(s))$.
This is a graded module over the graded ring $S=H_T^\bullet(\pt)$, where $\pt$ is the one-point \linebreak $T$-space
(we identify all one-point spaces and thus the corresponding algebras $S$).
The coefficient ring of these cohomologies is always a principal ideal domain with invertible~$2$
if the root system contains a component of type $C_n$.

The aim of this paper is to consider all Bott-Samelson varieties $\BS(s)$ For a fixed $G$ as a category.
This means that the Bott-Samelson varieties (or more exactly, their indexing sequences $s$) are the objects
of such a category and we need only to define the morphisms.

The simplest definition comes from the fact that $\BS(s)$ is a subvariety of $\BS(s')$,
whenever $s$ is a subsequence of $s'$. More exactly, let $\Seq$ be the category, whose objects are finite sequences of simple
reflections of $G$ (including the empty one) and any morphism $p:(s_1,\ldots,s_r)\to(s'_1,\ldots,s'_{r'})$ is a monotone
embedding $p:\{1,\ldots,r\}\to\{1,\ldots,r'\}$ such that $s'_{p(i)}=s_i$ for any $i$. The composition of such morphisms is the composition
in the category of sets.

We consider the covariant functor $\BS:\Seq\to \Top(T)$, where $\Top(T)$ is the category
of $T$-spaces (topological spaces with a continuous action of $T$).
This functor is already defined on objects by~(\ref{eq:BS}). 
For a morphism $p:s\to s'$, we define the canonical embedding
$$
\BS(p)([g_1,\ldots,g_r])=[g'_1,\ldots,g'_{r'}],\text{ where }
g'_j=
\left\{
\!\!
\begin{array}{ll}
g_i&\text{ if }j=p(i);\\[3pt]
e&\text{ if }j\notin\im p.
\end{array}
\right.
$$
It is natural to ask why we define $g'_j=e$ if $j\notin\im p$ and not $g'_j=s'_j$. The answer is that in the latter case,
we can not guarantee that $\BS(p)$ is a well-defined map.

In this paper, we show how to
eliminate this asymmetry by extending the set $\Mor(\Seq)$ to obtain a new category $\Fold$ with the same objects as $\Seq$.
In other words, $\Seq$ is a wide subcategory of $\Fold$. Every morphism $f:s\to s'$ of $\Fold$ induces a map
$f^T:\BS(s)^T\to\BS(s')^T$ on the $T$-fixed points (Section~\ref{morph}), although $f^T$ can not always be lifted to a continuous $T$-equi\-variant map
from $\BS(s)$ to $\BS(s')$. The maps $f^T$ for $f\in\Mor(\Fold)$ give a much larger (and more symmetric) class of maps between
the $T$-fixed point of Bott-Samelson varieties than the canonical maps $\BS(p)^T$ for $p\in\Mor(\Seq)$. 

The central idea of this paper is that in spite of the absence of a functor from $\Fold$ to $\Top(T)$, we have a
functor $\widetilde H$ from $\Fold$ to the category of graded $S$-modules $\GrMod(S)$. Thus we get the commutative diagram
\begin{equation}\label{eq:cdf}
\begin{tikzcd}
\Seq\arrow{r}{\BS}\arrow[hook]{rd}&\Top(T)\arrow{r}{H^\bullet_T}&\GrMod(S)\\
&\widetilde\Seq\arrow{ru}[swap]{\widetilde H}&
\end{tikzcd}
\end{equation}

We call $\Fold$ the {\it folding category} of $G$ for the following reason. Let $f\,{:}\,s\,{\to}\,s'$ be a mor\-phism
of $\Fold$ and $f^T:\BS(s)^T\to\BS(s')^T$ be the induced map on the $T$-fixed points.
These points can be interpreted as combinatorial galleries of Weyl chambers. If we know $f^T(\gamma)$
for one gallery $\gamma$, then we can reconstruct all other values $f^T(\gamma')$ in the following way.
Let $\gamma_1=\gamma,\gamma_2,\ldots,\gamma_n=\gamma'$ be a sequence such that $\gamma_{i+1}$ is obtained
from $\gamma_i$ by one fold at place $k_i$. Consider the sequence $\delta_1=f^T(\gamma),\delta_2,\ldots,\delta_n$ such that
$\delta_{i+1}$ is obtained from $\delta_i$ by the fold at place $p(k_i)$, where $p$ is some monotone embedding
uniquely defined by $f$ (not depending on $\gamma$). Then we have $f^T(\gamma')=\delta_n$
and this gallery depends only on $\gamma'$, not on the path to it from $\gamma$.

Still a question remains: does a functor to $\Top(T)$ assuring the commutativity exists from some smaller extension of $\Seq$ than $\Fold$?
%
%
More precisely, we ask if there exists a covariant functor $\BS':\Seq'\to\Top(T)$,
where $\Seq'$ is a subcategory of $\Fold$ containing $\Seq$, such that
the following diagram is commutative:
\begin{equation}\label{eq:z}
\begin{tikzcd}[column sep=9em]
\Seq\arrow{r}{\BS}\arrow[hook]{rd}\arrow[hook]{rdd}&\Top(T)\arrow{r}{H^\bullet_T}&\GrMod(S)\\
&\Seq'\arrow{ru}{H'}\arrow[hook]{d}\arrow[dashed]{u}{\BS'}&\\
&\widetilde\Seq\arrow{ruu}[swap]{\widetilde H}&
\end{tikzcd}
\end{equation}
Note that in the special case $\Seq'=\Seq$, the functor $\BS'$ exists and is equal to $\BS$.
The complete answer is obtained in the case of root systems with simply laced irreducible components (Theorem~\ref{theorem:cat}).

In Section~\ref{Generalities}, we set the basic notation and define the main characters
of this paper (Bott-Samelson varieties, combinatorial galleries and so on). We also discus
the coordinatization of $G$ as a Chevalley group and the cohomological criterion due to
M.\,H\"arterich~\cite{Haerterich}.

We define the folding category $\Fold$ in Section~\ref{fcFftH}. The main result here
is Theorem~\ref{theorem:1:rocaam}. By means of the functor $\widetilde H$ constructed in Section~\ref{widetildeH},
this theorem allows us to restrict \linebreak $T$-equivariant cohomologies
of Bott-Samelson varieties along morphisms of $\Fold$ in the same way as we
restrict cohomologies along usual morphisms.

In Section~\ref{Topology}, we describe those morphisms of $\Fold$ that do induce $T$-equivariant
continuous maps between Bott-Samelson varieties. We call these morphisms topological
(see Definition~\ref{def:1} and Theorem~\ref{theorem:1} for a combinatorial
criterion\footnote{We obtain even $T$-equivariant morphisms of algebraic varieties in this way.}).
It turns out that the question about the existence of a functor $H'$ making Diagram~(\ref{eq:z})
commutative is answered (for simply laced root systems) in terms of topological morphisms
(Theorem~\ref{theorem:cat}). It is easy to observe that topological morphisms are quite rare
(Examples~\ref{Ex:3}--\ref{Ex:6} and~\ref{Ex:8}). Therefore it would be interesting to find
a topological explanation for the existence of the functor $\widetilde H$.

Our technique of the topological part is based on the criterion for existence of $T$-curves
between certain points of Bott-Samelson varieties (Lemma~\ref{lemma:fcat:6.5}). To prove this result and
to lift topological morphisms to $T$-equivariant continuous maps between whole Bott-Samelson
varieties in Theorem~\ref{theorem:1}, we need to be able to calculate transition
functions between adjacent charts (Lemma~\ref{lemma:m}). We use the general idea (Proposition~\ref{lemma:1})
that the transition functions between any charts of Bott-Samelson varieties are calculated
inductively via certain sequences of elements of the Borel subgroup,
which we call transition sequences. It is controlling this sequences that allows us to produce
manageable transition functions.

We conclude the paper with Appendix. It consists of numerical examples and the proof
of one phenomenon (Corollary~\ref{corollary:2}), which is not used elsewhere in the paper.

\section{Generalities}\label{Generalities}

\subsection{Set theoretic notation} We denote the fact that $N$ is a subset of $M$, including the case $N=M$, by $N\subset M$,
reserving the notation $N\varsubsetneq M$ for the proper inclusion. We denote the cardinality of a finite set $M$ by $|M|$.
If $s$ is a sequence, then we write $|s|$ for its length and
$s_i$ for its $i$th entry. So we have $s=(s_1,\ldots,s_{|s|})$.
The identity map from a set $S$ to itself is denoted by $\id_S$.
The set of all maps from a set $X$ to a set $Y$ is denoted by $\Map(X,Y)$.

All indices are assumed integral and $[a,b]=\{a,a+1,\ldots,b\}$, $[a,b)=\{a,a+1,\ldots,b-1\}$ an so on
for any integers $a$ and $b$. For any
logical condition $\mathcal P$, we set $[\mathcal P]=1$ if $\mathcal P$ is true and $[\mathcal P]=0$ if not.
We use both notations $\phi\psi$ and $\phi\circ\psi$ for the composition.

\subsection{Root system} We fix a root system $R$ spanning its Euclidian space $E$ with product $(?,?)$.
Let $\Pi\subset R^+$ be sets of simple and positive roots respectively. The hyperplanes $L_\alpha=\{v\in E\suchthat (v,\alpha)=0\}$,
where $\alpha\in R$, define the chambers in E as the connected components of the difference $E\setminus\bigcup_{\alpha\in R}L_\alpha$.
The chambers are transitively permuted by the Weyl group~$W$. So each chamber is uniquely written as
$wC$ for $w\in W$, where $C=\{v\in E\suchthat (v,\alpha)>0\}$ is the fundamental chamber.

\begin{proposition}\label{proposition:1} Let $u,v\in W$ and $\alpha\in R$. The hyperplane $L_\alpha$ does not separate the chambers $uC$ and $vC$
if and only if the roots $u^{-1}\alpha$ and $v^{-1}\alpha$ are both positive or both negative. 
\end{proposition}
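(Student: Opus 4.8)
The plan is to convert the geometric notion of ``separation'' into a comparison of signs of a linear functional, and then to evaluate those signs using the orthogonality of the $W$-action together with the defining inequalities of the fundamental chamber. First I would record the elementary observation that each open chamber is disjoint from every wall $L_\beta$, so that on a fixed chamber the functional $x\mapsto(x,\alpha)$ has a constant, nonzero sign. Consequently $L_\alpha$ separates $uC$ and $vC$ exactly when this sign on $uC$ is opposite to the sign on $vC$, and it fails to separate them exactly when the two signs agree. The whole proposition thus reduces to computing, for an arbitrary $w\in W$, the sign of $(x,\alpha)$ for $x\in wC$.

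The key computation uses that $W$ acts by orthogonal transformations preserving $(?,?)$. Writing a point of $uC$ as $uz$ with $z\in C$, I have $(uz,\alpha)=(z,u^{-1}\alpha)$. Here I invoke the standard description of the fundamental chamber, $C=\{v\in E\suchthat(v,\beta)>0\text{ for all }\beta\in R^+\}$; since $u^{-1}\alpha$ is again a root, it is either positive or negative, and for every $z\in C$ the sign of $(z,u^{-1}\alpha)$ is accordingly $+$ if $u^{-1}\alpha\in R^+$ and $-$ if $u^{-1}\alpha$ is negative. Hence the sign of $(x,\alpha)$ on $uC$ is governed precisely by whether $u^{-1}\alpha$ is positive or negative, and the same argument applied to $v$ gives the sign on $vC$ in terms of $v^{-1}\alpha$.

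Combining the two steps, the signs on $uC$ and $vC$ coincide if and only if $u^{-1}\alpha$ and $v^{-1}\alpha$ are both positive or both negative, which is exactly the claimed criterion for $L_\alpha$ not to separate the two chambers. I do not anticipate any real obstacle: the argument is a short unwinding of definitions. The only points needing a word of care are the passage from positivity against the simple roots $\Pi$ to positivity against all of $R^+$ in the description of $C$ (immediate because each positive root is a nonnegative combination of simple roots, forcing $(z,\beta)>0$ for all $\beta\in R^+$ once it holds on $\Pi$), and the remark that chambers meet no wall, which is what makes the sign of $(x,\alpha)$ constant and nonzero on each chamber so that the dichotomy ``separates / does not separate'' is genuinely exhaustive.
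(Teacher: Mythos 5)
Your proof is correct. The paper in fact states this proposition without any proof, treating it as a standard fact about root systems, so there is no argument to compare against; your write-up is precisely the standard unwinding of definitions the paper implicitly relies on: the sign of $x\mapsto(x,\alpha)$ is constant and nonzero on each (connected) chamber, the invariance of the inner product gives $(uz,\alpha)=(z,u^{-1}\alpha)$ for $z\in C$, and the description of $C$ by positivity against all of $R^+$ (reduced to positivity against $\Pi$ as you note) converts that sign into the sign of the root $u^{-1}\alpha$. The two points of care you flag are exactly the right ones, and both are handled correctly.
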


For any root $\alpha\in R$, we denote the reflection of $E$ with respect to $L_\alpha$ by $s_\alpha$.
It is well known that $W$ is a Coxeter group with generators $\{s_\alpha\suchthat\alpha\in\Pi\}$.
We have $wL_\alpha=L_{w\alpha}$ and $ws_{\alpha}w^{-1}=s_{w\alpha}$ for any $w\in W$ and $\alpha\in R$.

\subsection{Chevalley groups}${}$\!\!\! We will consider the group $G$ constructed as a Chevalley group~\cite{Steinberg}.
It is generated by the root elements $x_\alpha(c)$, where $\alpha\in R$ and $c\in\C$.
We fix the following important elements:
$$
s_\alpha(c)=x_{\alpha}(c)x_{-\alpha}(-c^{-1})x_{\alpha}(c),\quad s_\alpha=s_\alpha(1),\quad h_\alpha(c)=s_\alpha(c)s_\alpha^{-1}.
$$
These elements are subject to the following relations:
\smallskip
{
\renewcommand{\labelenumi}{{\rm \theenumi}}
\renewcommand{\theenumi}{{\rm(R\arabic{enumi})}}
\begin{enumerate}\itemsep=10pt
\item\label{rel:1} $x_\alpha(c)x_\alpha(d)=x_\alpha(c+d)$;
\item\label{rel:2} $\displaystyle x_\alpha(c)x_\beta(d)x_\alpha(c)^{-1}x_\beta(d)^{-1}=\prod_{i,j>0}x_{i\alpha+j\beta}(\xi_{i,j}c^id^j)$ if $\alpha+\beta\ne0$;\\[-16pt]
\item\label{rel:3} $s_\alpha h_\beta(c) s_\alpha^{-1}=h_{s_\alpha\beta}(c)$;
\item\label{rel:4} $s_\alpha x_\beta(t)s_\alpha^{-1}=x_{s_{\alpha}\beta}(\sigma_{\alpha,\beta}t)$;
\item\label{rel:5} $h_\alpha(c)x_\beta(d)h_\alpha(c)^{-1}=x_\beta(c^{\<\beta,\alpha\>}d)$.
\end{enumerate}}
\medskip

\noindent
The product in~\ref{rel:2} is taken only over $i$ and $j$ such that $i\alpha+j\beta$ is a root and
the coefficients $\xi_{i,j}$ do not depend on $c$ and $d$. In~\ref{rel:4}, we have $\sigma_{\alpha,\beta}=\pm1$.
We note the following important property: $\sigma_{\alpha,\beta}=\sigma_{\alpha,-\beta}$.

The maximal torus $T$ is the subgroup of $G$ generated by all $h_\alpha(c)$ and the Borel subgroup $B$ is the subgroup of $G$
generated by $T$ and all $x_\alpha(c)$ with $\alpha>0$. The Weyl group of our root system $W$ can be identified with
the quotient $N/T$, where $N$ is the subgroup of $G$ generated by all $s_\alpha(c)$.
We will often deal with the flag variety $G/B$.
Each its $T$-fixed point can be written uniquely as $wB$ for some $w\in W$. So,
abusing notation, we will denote this point simply by $w$.


For any root $\alpha\in R$, there exists a group homomorphism from $\SL_2(\C)$ to $G$ such that
\begin{equation}\label{eq:SL:PSL}
\left(\!
\begin{array}{cc}
1&c\\
0&1
\end{array}\!
\right)\mapsto x_\alpha(c),
\qquad
\left(\!
\begin{array}{cc}
1&0\\
c&1
\end{array}\!
\right)\mapsto x_{-\alpha}(c).
\end{equation}
Thus calculations in $\SL_2(\C)$ imply that
\begin{equation}\label{eq:comm:1}
x_\alpha(c)x_{-\alpha}(d)=x_{-\alpha}\(\tfrac d{cd+1}\)x_\alpha\big(c(cd+1)\big)h_\alpha(cd+1)
\end{equation}
if $cd\ne-1$ and that
\begin{equation}\label{eq:comm:2}
x_\alpha(c)x_{-\alpha}\(-\tfrac1c\)=s_\alpha x_\alpha\(-\tfrac1c\)h_\alpha\(\tfrac1c\).
\end{equation}
if $c\ne0$.

We call a root system {\it simply laced\label{simply_laced}} if all its irreducible components
are simply laced (that is of types $A_n$, $D_n$ or $E_n$). In that case, for any two roots $\alpha$ and $\beta$ with nonzero sum, we have the following commutation formula:
\begin{equation}\label{eq:comm}
x_\alpha(c)x_\beta(d)=x_\beta(d)x_\alpha(c)x_{s_\beta\alpha}(\epsilon cd)
\end{equation}
for some $\epsilon\in\{-1,0,1\}$, where $s_\beta\alpha=\alpha+\beta$ if $\epsilon\ne0$.

\begin{remark} \rm The reader should be cautious when dealing with $s_\alpha$. It denotes either the element of the Weyl group $W$
or the element of $G$ depending on the context. We will indicate which of two we mean if ambiguity
arises.

\end{remark}

\subsection{Bott-Samelson varieties}\label{BS} The Bott-Samelson variety $\BS(s)$ for a sequence of simple roots $s=(s_1,\ldots,s_r)$ is defined by
formula~(\ref{eq:BS}) of the introduction. Every point of this variety can be written as $[g_1,\ldots,g_r]$
with the notation also introduced there. The maximal torus $T$ acts on $\BS(s)$ via the first factor: $t\cdot[g_1,\ldots,g_r]=[tg_1,\ldots,g_r]$.

For $i=0,\ldots,r$, we have the
truncation map $\tr_i:\BS(s)\to\BS(s')$, where $s'=(s_1,\ldots,s_i)$, and the projection map $\pi_i:\BS(s)\to G/B$ defined as follows:
$$
\tr_i([g_1,\ldots,g_r])=[g_1,\ldots,g_i],\qquad \pi_i([g_1,\ldots,g_r])=g_1\cdots g_iB/B.
$$
Note that $\BS(s)$ can be considered as a subvariety of $(G/B)^r$ via the embedding
\begin{equation}\label{eq:a}
\iota:\BS(s)\hookrightarrow(G/B)^r,\qquad \iota(x)=(\pi_1(x),\pi_2(x),\ldots,\pi_r(x)).
\end{equation}
All the above maps are obviously $T$-equivariant.

The set of the $T$-fixed points $\BS(s)^T$ can be identified\footnote{$[\gamma_1,\ldots,\gamma_r]$ is identified with $(\gamma_1,\ldots,\gamma_r)$.} with the set
$$
\Gamma_s=\{(\gamma_1,\ldots,\gamma_r)\suchthat\gamma_i=s_i\text{ or }\gamma_i=e\}.
$$
Its elements are called {\it combinatorial galleries}\label{cg}. The gallery of Weyl chambers corresponding
to $\gamma=(\gamma_1,\ldots,\gamma_r)\in\Gamma_s$ is $(\gamma^0C,\gamma^1C,\gamma^2C,\ldots,\gamma^rC)$,
where $\gamma^i=\gamma_1\gamma_2\ldots\gamma_i$. The common wall of the adjacent chambers $\gamma^{i-1}C$ and $\gamma^iC$
is $L_{\bbeta_i(\gamma)}$, where $\bbeta_i(\gamma)=\gamma^i(-\alpha_i)$, $s_i=s_{\alpha_i}$ and $\alpha_i$ is a simple root.
Proposition~\ref{proposition:1} implies that $L_{\bbeta_i(\gamma)}$ separates $\gamma^iC$ and $C$ if and only if
$\bbeta_i(\gamma)>0$. Hence we get the following useful criterion.

\begin{proposition}\label{proposition:2}
Let $\gamma\in\Gamma_s$, $\delta\in\Gamma_{s'}$, $i=1,\ldots,|s|$, $j=1,\ldots,|s'|$ and $\beta\in R$ be such that
$\bbeta_i(\gamma)=\pm\beta$ and $\bbeta_j(\delta)=\pm\beta$. The hyperplane $L_\beta$ does not separate the chambers
$\gamma^iC$ and $\delta^jC$ if and only if $\bbeta_i(\gamma)=\bbeta_j(\delta)$.
\end{proposition}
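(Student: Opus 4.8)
The plan is to reduce the statement directly to Proposition~\ref{proposition:1}, applied with $u=\gamma^i$ and $v=\delta^j$ and with the given root $\beta$ in place of the $\alpha$ there. That criterion is phrased entirely in terms of the signs of $u^{-1}\beta$ and $v^{-1}\beta$, so the whole argument reduces to translating the hypotheses $\bbeta_i(\gamma)=\pm\beta$ and $\bbeta_j(\delta)=\pm\beta$ into statements about these two signs, and then matching them against the equality $\bbeta_i(\gamma)=\bbeta_j(\delta)$.

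First I would unwind the definition $\bbeta_i(\gamma)=\gamma^i(-\alpha_i)$, where $\alpha_i$ is the simple root with $s_i=s_{\alpha_i}$. With $u=\gamma^i$ this reads $u^{-1}\bbeta_i(\gamma)=-\alpha_i$. Since $\alpha_i$ is simple, hence positive, the case $\bbeta_i(\gamma)=\beta$ forces $u^{-1}\beta=-\alpha_i<0$, while the case $\bbeta_i(\gamma)=-\beta$ forces $u^{-1}\beta=\alpha_i>0$. Thus the sign of $u^{-1}\beta$ records exactly which alternative of $\bbeta_i(\gamma)=\pm\beta$ occurs: it is negative precisely when $\bbeta_i(\gamma)=\beta$. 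Running the identical computation for $v=\delta^j$ and its simple root $\alpha'_j$ yields the same dictionary, namely that $v^{-1}\beta<0$ precisely when $\bbeta_j(\delta)=\beta$.

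With this dictionary the two sides of the asserted equivalence line up at once. The condition $\bbeta_i(\gamma)=\bbeta_j(\delta)$ means that either both equal $\beta$, so that $u^{-1}\beta$ and $v^{-1}\beta$ are both negative, or both equal $-\beta$, so that both are positive; in either case the two roots $u^{-1}\beta$ and $v^{-1}\beta$ share the same sign. Conversely, $\bbeta_i(\gamma)\ne\bbeta_j(\delta)$ forces one of them to be $\beta$ and the other $-\beta$, whence $u^{-1}\beta$ and $v^{-1}\beta$ have opposite signs. By Proposition~\ref{proposition:1} the ``same sign'' situation is exactly when $L_\beta$ does not separate $\gamma^iC$ and $\delta^jC$, which is what we want.

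There is no serious obstacle here, since the statement is essentially a repackaging of Proposition~\ref{proposition:1}. The only point demanding care is the sign bookkeeping forced by the minus sign in the definition $\bbeta_i(\gamma)=\gamma^i(-\alpha_i)$, which must be tracked consistently through both $\pm\beta$ cases so that the correspondence ``$u^{-1}\beta<0\iff\bbeta_i(\gamma)=\beta$'' comes out in this orientation and not the reversed one.
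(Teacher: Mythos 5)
Your proof is correct and follows essentially the same route as the paper, which presents this proposition as a direct consequence of Proposition~\ref{proposition:1}: the sign dictionary $u^{-1}\beta<0\iff\bbeta_i(\gamma)=\beta$ (coming from $\bbeta_i(\gamma)=\gamma^i(-\alpha_i)$ with $\alpha_i$ simple) is exactly the needed translation. The only cosmetic difference is that the paper phrases the intermediate observation as separation from the fundamental chamber $C$ (``$L_{\bbeta_i(\gamma)}$ separates $\gamma^iC$ and $C$ iff $\bbeta_i(\gamma)>0$'') and then compares, while you apply Proposition~\ref{proposition:1} directly to the pair $u=\gamma^i$, $v=\delta^j$.
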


The Bott-Samelson variety $\BS(s)$ is covered by Zariski open affine subvarieties (charts)
$$
U^\gamma_s=\{[x_{\gamma_1(-\alpha_1)}(c_1)\gamma_1,\ldots,x_{\gamma_r(-\alpha_r)}(c_r)\gamma_r]\suchthat c_1,\ldots,c_r\in\C\}
$$
Note that in this formula, $\gamma_i$ in the subscript denotes an element of $W$ and $\gamma_i$ in the line denotes an element
of $G$ ($e$ or $s_{\alpha_i}(1)$). For brevity, we set
$${\arraycolsep=2pt
\begin{array}{ccl}
[c_1,\ldots,c_r]^\gamma_s&=&[c]^\gamma_s=[x_{\gamma_1(-\alpha_1)}(c_1)\gamma_1,\ldots,x_{\gamma_r(-\alpha_r)}(c_r)\gamma_r],\\[7pt]
\llbracket c_1,\ldots,c_i\rrbracket^\gamma_s&=&x_{\gamma_1(-\alpha_1)}(c_1)\gamma_1\cdots x_{\gamma_i(-\alpha_i)}(c_i)\gamma_i.
\end{array}}
$$
We have $U_s^\gamma\cong\C^r$ with the following action of $T$:
$$
t\cdot[c_1,\ldots,c_r]_s^\gamma=[\bbeta_1(\gamma)(t)c_1,\ldots,\bbeta_r(\gamma)(t)c_r]_s^\gamma.
$$
We need the elements $\llbracket c_1,\ldots,c_i\rrbracket^\gamma_s$ to calculate the so-called transition sequence
(responsible for the transition functions between charts) in Proposition~\ref{lemma:1} and at other instances
where this proposition is applied. Therefore we study them in more detail.
For $\gamma\in\{e,s_\alpha\}$ with $\alpha\in\Pi$, we set $\sigma^\gamma_\beta=\sigma_{\alpha,\beta}$ if $\gamma=s_\alpha$
and $\sigma^\gamma_\beta=1$ if $\gamma=e$. If $\gamma\in\Gamma_s$, then we set
$$
\sigma_\beta^{\gamma,i}=\sigma_{\gamma_2\cdots\gamma_i(\beta)}^{\gamma_1}\sigma_{\gamma_3\cdots\gamma_i(\beta)}^{\gamma_2}
\cdots
\sigma_{\gamma_{i-1}\gamma_i(\beta)}^{\gamma_{i-2}}
\sigma_{\gamma_i(\beta)}^{\gamma_{i-1}}.
$$
Clearly, $\sigma_{-\beta}^{\gamma,i}=\sigma_\beta^{\gamma,i}$. Thus
$$
\llbracket c_1,\ldots,c_i\rrbracket_s^\gamma =
x_{\bbeta_1(\gamma)}(\sigma^{\gamma,1}_{\alpha_1}c_1)
x_{\bbeta_2(\gamma)}(\sigma^{\gamma,2}_{\alpha_2}c_2)\cdots
x_{\bbeta_i(\gamma)}(\sigma^{\gamma,i}_{\alpha_i}c_i)\gamma^i,
$$
where $\gamma\in\Gamma_s$ and $i=0,\ldots,r$.

For any root $\alpha\in R$, we define on $\Gamma_s$ the following equivalence relation:
$$
\gamma\sim_\alpha\delta\Longleftrightarrow\gamma_i=\delta_i\text{ unless }\bbeta_i(\gamma)=\pm\alpha
$$
and the following total order:
$$
\delta\vartriangleleft\gamma\Longleftrightarrow\exists k:\;\delta^k<\gamma^k\text{ (in the Bruhat order)\; and \;}\forall i<k:\;\delta^i=\gamma^i.
$$
Moreover, we define the following two sets:
$$
M_\alpha(\gamma)=\{i=1,\ldots,r\suchthat\bbeta_i(\gamma)=\pm\alpha\},\qquad
J_\alpha(\gamma)=\{i=1,\ldots,r\suchthat\bbeta_i(\gamma)=\alpha\}.
$$

Finally, we introduce the {\it folding operator}\label{fo} $\f_i$ that folds every gallery at the $i$th place.
In other words,
$$
(\f_i\gamma)_j=\left\{
\begin{array}{ll}
\gamma_is_i&\text{ if }j=i;\\
\gamma_j&\text{ otherwise }.
\end{array}
\right.
$$

It is easy to prove that

\begin{equation}\label{eq:-2}
(\f_i\gamma)^k=s_{\bbeta_i(\gamma)}^{\cond{i\le k}}\gamma^k,\qquad \bbeta_k(\f_i\gamma)=s_{\bbeta_i(\gamma)}^{\cond{i\le k}}\bbeta_k(\gamma).
\end{equation}

%

\subsection{Cohomology of Bott-Samelson varieties} Let $k$ be a principal
ideal domain with invertible $2$ if the root system contains a component of type $C_n$.
For any $T$-space $X$, we will consider its $T$-equivariant cohomology
$H^\bullet_T(X)=H^\bullet_T(X,k)$ with the ring of coefficients~$k$.
For our choice of $k$, the restriction map $H^\bullet_T(\BS(s))\to H^\bullet_T(\Gamma_s)$
is an embedding. We denote the image of this embedding by $\X(s)$ or $\X(s,k)$ if
we need to underline the ring of coefficients $k$. Similarly, we denote
the cohomology ring $H^\bullet_T(\pt,k)$ of the point by $S$ or more precisely by $S_k$.
It is known that $S$ is isomorphic to the symmetric algebra of the space $X(T)\otimes_\Z k$,
where $X(T)$ is the group of characters of $T$. Therefore $S$ is a commutative ring
with the action of $W$. Moreover, $H^\bullet_T(\BS(s))$,
$H^\bullet_T(\Gamma_s)$ and $\X(s)$ are $S$-modules.
If $X$ is a finite $T$-space with discrete topology (and thus the identical action of $T$), then
we identify $H^\bullet_T(X)$ with $\Map(X,S)$ by the equivariant Mayer-Vietoris sequence.

We will use the following criterion due to
M.\,H\"arterich\footnote{This is the formulation of~\cite{scbs} with the choice of coefficients $\Z'$
appropriate for the intersection formula~\cite[Corollary~2.5]{scbs} in the localization theorem.}.

\begin{proposition}[\mbox{\cite[Theorem 6.2]{Haerterich}}]\label{proposition:x:1}
Let $\Z'=\Z[1/2]$ if the root system $R$ contains a component of type $C_n$
and $\Z'=\Z$ otherwise.
An element $f\in H_T^\bullet(\Gamma_s,\Z')$ belongs to $\X(s,\Z')$  if and only if
$$
\sum_{\delta\in\Gamma_s,\delta\sim_\alpha\gamma,J_\alpha(\delta)\subset J_\alpha(\gamma)}(-1)^{|J_\alpha(\delta)|}f(\delta)\=0\pmod{\alpha^{|J_\alpha(\gamma)|}}
$$
for any positive root $\alpha$ and gallery $\gamma\in\Gamma_s$.
\end{proposition}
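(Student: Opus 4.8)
The plan is to combine the injectivity of restriction to the $T$-fixed points (which the excerpt records for our coefficient ring) with a Chang--Skjelbred reduction to rank-one subtori, followed by an explicit rank-one computation indexed by the roots. First I would record the module-theoretic input. Since $\tr_{r-1}$ exhibits $\BS(s_1,\dots,s_r)$ as a $\mathbb{P}^1$-bundle over $\BS(s_1,\dots,s_{r-1})$, an easy induction on $r$ shows that $H^\bullet_T(\BS(s),\Z')$ is a free (hence reflexive) graded $S$-module; by definition $\X(s,\Z')$ is the image of its restriction to $\Map(\Gamma_s,S)$. The hypothesis that $2$ be invertible in type $C_n$ is exactly what is needed to keep this module torsion-free, so that the localization step below goes through.

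Second, I would invoke the Chang--Skjelbred lemma. The $T$-weights occurring on $\BS(s)$ are the values $\bbeta_i(\gamma)$, all of which are roots; hence the only codimension-one infinitesimal isotropy subgroups are the subtori $K_\alpha\subset T$, the identity components of $\ker\alpha$, for $\alpha\in R^+$. Reflexivity of $H^\bullet_T(\BS(s))$ then yields
\[
\X(s,\Z')=\bigcap_{\alpha\in R^+}\im\big(H^\bullet_T(\BS(s)^{K_\alpha})\to\Map(\Gamma_s,S)\big).
\]
So it suffices, for each fixed $\alpha$, to describe the image coming from $\BS(s)^{K_\alpha}$ and to check that it is cut out precisely by the congruences attached to $\alpha$ in the statement.

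Third comes the analysis of $\BS(s)^{K_\alpha}$, which I regard as the geometric heart of the argument. Reading off the $T$-weights $\bbeta_i(\gamma)$ on the chart $U^\gamma_s\cong\C^r$, the coordinate $c_i$ is fixed by $K_\alpha$ exactly when $\bbeta_i(\gamma)=\pm\alpha$, that is, when $i\in M_\alpha(\gamma)$. Consequently each connected component of $\BS(s)^{K_\alpha}$ is a smooth $T$-variety of dimension $|M_\alpha(\gamma)|$ whose set of $T$-fixed points is a single $\sim_\alpha$-class, and on which the residual one-dimensional torus $T/K_\alpha$ acts through the single character $\alpha$. Such a component is an iterated $\mathbb{P}^1$-bundle whose successive fibre weights are all $\pm\alpha$, so the problem is reduced to a rank-one computation.

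Finally I would carry out this rank-one computation by induction on $m=|M_\alpha(\gamma)|$, pushing the tower one $\mathbb{P}^1$ at a time. For $m=1$ the congruence for the gallery with $\bbeta_i(\gamma)=\alpha$ reads $f(\f_i\gamma)-f(\gamma)\equiv0\pmod\alpha$, the familiar GKM edge relation, while for the folded gallery ($\bbeta_i=-\alpha$) the modulus is $\alpha^{0}=1$ and the condition is vacuous. Adding one further $\mathbb{P}^1$ and solving the new extension relation forces an alternating combination of the values at the two new fixed points, which multiplies the modulus by one more factor of $\alpha$ and recursively reproduces both the sign $(-1)^{|J_\alpha(\delta)|}$ and the constraint $J_\alpha(\delta)\subset J_\alpha(\gamma)$, the top modulus $\alpha^{|J_\alpha(\gamma)|}$ being attained by the configuration in which every $\alpha$-wall is positive. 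This combinatorial bookkeeping, together with the control of the Euler classes $\Eu$ of the normal directions to the component (again the point where invertibility of $2$ in type $C_n$ is used to avoid denominators), is where the main technical effort lies. Intersecting the resulting conditions over all $\alpha\in R^+$ reproduces exactly the system of congruences in the statement, proving the criterion.
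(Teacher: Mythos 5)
The first thing to note is that the paper contains no proof of this proposition at all: it is imported verbatim from H\"arterich \cite[Theorem~6.2]{Haerterich}, and the paper's only original contribution here is the footnote fixing the coefficient ring $\Z'$. So your attempt can only be compared against the cited source, and in outline it \emph{is} that source's argument: reduce by localization to the codimension-one subtori $K_\alpha=(\ker\alpha)^\circ$, $\alpha\in R^+$; observe that the connected components of $\BS(s)^{K_\alpha}$ are smooth $T$-varieties whose $T$-fixed points form single $\sim_\alpha$-classes and which are products of $\mathbb{P}^1$'s --- these are exactly the embeddings $v^{\alpha}_\rho\colon(G_\alpha/B_\alpha)^{\l}\hookrightarrow\BS(s)$ of \cite[Section~4]{Haerterich} that the present paper itself re-uses in the proof of Theorem~\ref{theorem:1:rocaam}; then do a rank-one computation on each component. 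Your strategy is therefore the right one and matches the way the result is actually proved.

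Two soft spots keep this from being a proof rather than a plan. First, the Chang--Skjelbred lemma over $\Z'$ is the crux, not a black box: the lemma is routine with rational coefficients, but the integral version (equivalently, running the localization theorem and its intersection/Euler-class formula over $\Z'$, cf.\ the paper's footnote and~\cite{scbs}) is exactly where the hypothesis on $2$ for type $C_n$ components enters. Your first explanation of that hypothesis --- that it keeps $H^\bullet_T(\BS(s),\Z')$ torsion-free --- is off target, since freeness already holds over $\Z$ (the variety has an algebraic cell decomposition, so the Leray--Hirsch argument needs no inverted primes); your later remark that $2$ must be inverted to control denominators coming from Euler classes is the correct one, and that step is left unproved. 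Second, the inductive rank-one computation --- that for a tower of $\mathbb{P}^1$'s with all weights $\pm\alpha$ the image of restriction is cut out precisely by the alternating congruences with sign $(-1)^{|J_\alpha(\delta)|}$, constraint $J_\alpha(\delta)\subset J_\alpha(\gamma)$ and modulus $\alpha^{|J_\alpha(\gamma)|}$ --- is asserted, not carried out, and it is the main combinatorial content of the theorem; compare Lemma~\ref{lemma:y} and formula~(\ref{eq:11}) of this paper, which record the equivalent integration-formula conditions $\sum_\mu f(\mu)\big/\prod_i\mu^i(-\alpha)\in S$ that H\"arterich actually manipulates to obtain the stated congruences. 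Until those two steps are supplied, what you have is a faithful reconstruction of the architecture of the cited proof, not an independent verification of it.
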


We have the following commutative diagram of morphisms of $S$-modules:
\begin{equation}\label{eq:-3}
\begin{tikzcd}
\X(s,\Z')\otimes_{S_{\Z'}}S\arrow{r}{\sim}\arrow[hook]{d}&\X(s)\arrow[hook]{d}\\
H^\bullet_T(\Gamma_s,\Z')\otimes_{S_{\Z'}} S\arrow{r}{\zeta}[swap]{\sim}&H_T^\bullet(\Gamma_s)
\end{tikzcd}
\end{equation}
The natural isomorphism $\zeta$ in the bottom line acts as follows
$\zeta(f\otimes\lm)(\gamma)=\zeta'(f(\gamma))\lm$, where $\zeta':S_{\Z'}\to S$ the ring homomorphism
coming from the natural ring homomorphism $\Z'\to k$.


\section{\texorpdfstring{Category $\Fold$ and Functor $\widetilde H$}{Category tilde Seq and Functor tilde H}}\label{fcFftH}

\subsection{Case of rank 2}\label{rank2}

Let $\alpha\in R$. We denote by $G_\alpha$ the subgroup of $G$ generated by all root elements
$x_\alpha(c)$ and $x_{-\alpha}(c)$. It is well known that $G_\alpha$
is isomorphic to either $\SL_2(\C)$ or $\PSL_2(\C)$, the isomorphism being given by~(\ref{eq:SL:PSL}).
The subgroup $G_\alpha$ contains its own maximal torus $T_\alpha=\{h_\alpha(c)\suchthat c\in\C\}$ and
Borel subgroup $B_\alpha$ generated by $T_\alpha$ and all root elements $x_\alpha(c)$.
Therefore for every integer $\l\ge0$, we can consider the Bott-Samelson
variety
$$
\BS^\2_{\alpha,\l}=\BS((\underbrace{s_\alpha,\ldots,s_\alpha}_{\l\text{ times.}})).
$$
for the group $G_\alpha$. The embedding of $\BS^\2_{\alpha,\l}$ into $(G_\alpha/B_\alpha)^\l$ given by~(\ref{eq:a})
in the present case is an isomorphism and we denote it by $\iota_2$ to avoid confusion in the proof
of Theorem~\ref{theorem:1:rocaam}.

We note that the `big' torus $T$ acts on $G_\alpha/B_\alpha$ and $\BS^\2_{\alpha,\l}$ by
$$
t\cdot gB_\alpha=tgt^{-1}B_\alpha,\qquad t\cdot[g_1,\ldots,g_\l]=[tg_1t^{-1},\ldots,tg_\l t^{-1}].
$$
The isomorphism $\iota_2:\BS^\2_{\alpha,\l}\ito(G_\alpha/B_\alpha)^\l$ is obviously $T$-equivariant.

We denote by $\Gamma^\2_{\alpha,\l}$ the set of $T_\alpha$-fixed points of $\BS^\2_{\alpha,\l}$
and by $\X^\2_{\alpha,\l}$ the image of the restriction
$H^\bullet_T(\BS^\2_{\alpha,\l})\to H^\bullet_T(\Gamma^\2_{\alpha,\l})$.
Note that $\Gamma^\2_{\alpha,\l}$ is also  the set of $T$-fixed points of $\BS^\2_{\alpha,\l}$.
We denote $s=s_\alpha$ for brevity ($\alpha$ is the unique simple root).

Let $K\subset[1,\l]$ and $L\subset[1,\l]$ be disjoint sets and $k:K\to\{e,s\}$ be an arbitrary function.
We consider the following subset of $\BS^\2_{\alpha,\l}$:
$$
\Sigma_{K,L}^k=\iota_2^{-1}\big(\{(g_1,\ldots,g_\l)\in(G_\alpha/B_\alpha)^\l\suchthat \forall i\in K: g_i=k_i;\;\forall i\in L: g_{i-1}=g_i\}\big),
$$
where $g_0=e$. This subset is clearly closed and $T$-equivariant.
Consider the subset $[1,\l]\setminus(K\cup L)=\{i_1<\cdots<i_m\}$ and let $\phi:(G_\alpha/B_\alpha)^\l\to(G_\alpha/B_\alpha)^m$ be the projection to
this subset: $\phi(g_1,\ldots,g_\l)=(g'_1,\ldots,g'_m)$, where $g'_j=g_{i_j}$.
We have the chain of isomorphisms
$$
\begin{tikzcd}
\Sigma_{K,L}^k\arrow{r}{\iota_2}[swap]{\sim}&\iota_2(\Sigma_{K,L}^k)\arrow{r}{\phi}[swap]{\sim}&(G_\alpha/B_\alpha)^m
\end{tikzcd}
$$
Hence the $T$-equivariant Euler class is equal to $\Eu_T(\delta,\Sigma^k_{K,L})=\prod_{i\in[1,\l]\setminus(K\cup L)}\delta^i(-\alpha)$.
Considering the commutative diagram of restrictions
$$
\begin{tikzcd}
H_T^\bullet(\BS^\2_{\alpha,\l})\arrow{r}\arrow{d}&H_T^\bullet(\Sigma_{K,L}^k)\arrow{d}&\\
H_T^\bullet(\Gamma^\2_{\alpha,\l})\arrow{r}& H_T^\bullet((\Sigma_{K,L}^k)^T)\arrow[equal]{r}&H_T^\bullet(\{\delta\in\Gamma^\2_{\alpha,\l}\suchthat \forall i\in K:\delta^i=k_i;\;\forall i\in L:\delta_i=e\})
\end{tikzcd}
$$
we get
\begin{equation}\label{eq:11}
\sum_{\ov{\forall i\in K:\mu^i=k_i}{\forall i\in L:\mu_i=e}}\frac{f(\mu)}{\prod_{i\in[1,\l]\setminus(K\cup L)}\mu^i(-\alpha)}\in S
\end{equation}
for any $f\in\X^\2_{\alpha,\l}$ (see the arguments in~\cite[Section 5]{Haerterich} or~\cite{Atiyah_Bott}).
We are going to generalize this fact as follows.





\begin{lemma}\label{lemma:y}
Let $f\in\X^\2_{\alpha,\l}$ and $k:K\to\{e,s\}$, $l:L\to\{e,s\}$ be arbitrary functions. Then
\begin{equation}\label{eq:14}
\sum_{\ov{\forall i\in K:\mu^i=k_i}{\forall i\in L:\mu_i=l_i}}\frac{f(\mu)}{\prod_{i\in[1,\l]\setminus(K\cup L)}\mu^i(-\alpha)}\in S.
\end{equation}
\end{lemma}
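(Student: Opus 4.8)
The plan is to deduce the general formula (\ref{eq:14}) from the already-established special case (\ref{eq:11}) by an induction that converts the individual-entry constraints $\mu_i=l_i$ into cumulative constraints of the type $\mu^i=k_i$ that (\ref{eq:11}) already allows. For disjoint $K,L\subset[1,\l]$ and functions $k\colon K\to\{e,s\}$, $l\colon L\to\{e,s\}$, abbreviate
$$
\Phi(K,L,k,l)=\sum_{\ov{\forall i\in K:\mu^i=k_i}{\forall i\in L:\mu_i=l_i}}\frac{f(\mu)}{\prod_{i\in[1,\l]\setminus(K\cup L)}\mu^i(-\alpha)},
$$
so that the assertion is $\Phi(K,L,k,l)\in S$ for all such data, while (\ref{eq:11}) is precisely the case $l\equiv e$. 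I will induct on the number $n=|\{i\in L\suchthat l_i=s\}|$ of places where $l$ equals $s$; the base case $n=0$ is exactly (\ref{eq:11}), which is available for every $K$, $L$ and $k$.

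For the inductive step, fix $j\in L$ with $l_j=s$ and use $\cond{\mu_j=s}=1-\cond{\mu_j=e}$ to write
$$
\Phi(K,L,k,l)=\Psi-\Phi(K,L,k,l'),
$$
where $l'$ agrees with $l$ except that $l'_j=e$, and $\Psi$ denotes the same sum with no constraint imposed at $j$ (the index $j$ still being excluded from the denominator, since $j\in L$). The term $\Phi(K,L,k,l')$ carries only $n-1$ values $s$, hence lies in $S$ by the induction hypothesis. It therefore suffices to prove $\Psi\in S$.

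The crucial point is that leaving $\mu_j$ unconstrained is the same as leaving $\mu^j$ unconstrained: the prefix $\mu^{j-1}$ is determined by the remaining data and $\mu^j=\mu^{j-1}\mu_j$ with $\mu_j\in\{e,s\}$. Splitting instead according to the value of $\mu^j$ gives $\Psi=\Psi_e+\Psi_s$, where $\Psi_e$ (resp. $\Psi_s$) carries the extra cumulative constraint $\mu^j=e$ (resp. $\mu^j=s$). Because $(K\cup\{j\})\cup(L\setminus\{j\})=K\cup L$, relabelling $j$ from an $L$-constraint to a $K$-constraint does not touch the denominator, whence
$$
\Psi_e=\Phi\big(K\cup\{j\},L\setminus\{j\},k\cup\{j\mapsto e\},l|_{L\setminus\{j\}}\big),\qquad \Psi_s=\Phi\big(K\cup\{j\},L\setminus\{j\},k\cup\{j\mapsto s\},l|_{L\setminus\{j\}}\big).
$$
Each of these has exactly $n-1$ values $s$ in its $l$-part, so both lie in $S$ by induction; thus $\Psi\in S$ and the induction closes.

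The main obstacle is conceptual rather than computational. One cannot simply repeat the Euler-class/localization computation that produced (\ref{eq:11}), because there is no closed $T$-invariant subvariety of $\BS^\2_{\alpha,\l}$ cutting out the locus $\mu_j=s$ directly: the natural ``anti-diagonal'' in $(G_\alpha/B_\alpha)^2$ is not $T$-stable, since no $T$-equivariant automorphism of $G_\alpha/B_\alpha$ interchanges its two fixed points (such a map would negate the weight $\alpha$). The inductive reduction sidesteps this by trading each individual-entry constraint $\mu_j=s$ for cumulative constraints already permitted in (\ref{eq:11}); the only bookkeeping to monitor is that the denominator is insensitive to the $K/L$ split and that every auxiliary sum genuinely loses one value $s$, which is what guarantees termination.
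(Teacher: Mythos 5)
Your proof is correct, and its inductive step is genuinely different from (and simpler than) the paper's. Both arguments induct on the number of indices $i\in L$ with $l_i=s$, with~(\ref{eq:11}) as the base case quantified over all admissible $(K,L,k)$. The paper's step, however, converts the entry constraint $\mu_p=s$ into a cumulative one: it sets $q=\max\,[0,p)\setminus L$, so that $(q,p]\subset L$, and distinguishes two cases. When $q\in K\cup\{0\}$, the chained constraints on $(q,p)$ pin $\mu^{p-1}$, so $\mu_p=s$ is equivalent to $\mu^p=k_ql_{q+1}\cdots l_p$ and a single application of the hypothesis suffices; when $q>0$ and $q\notin K$, the value $\mu^q$ is a genuine summation variable (it even appears in the denominator), and the paper applies the hypothesis twice, once with the extra cumulative constraint $\mu^p=e$ and once with $\mu^q=l_{q+1}\cdots l_{p-1}$, then takes a signed combination of~(\ref{eq:y}) and~(\ref{eq:yy}) in which the unwanted summands cancel, after expressing $\mu^p(-\alpha)$ through $\mu^q(-\alpha)$. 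You replace all of this by two elementary identities at the single index $j$: the inclusion-exclusion $\cond{\mu_j=s}=1-\cond{\mu_j=e}$, and the partition of the $j$-unconstrained sum $\Psi$ by the value of $\mu^j\in\{e,s\}$; since moving $j$ from $L$ to $K$ leaves $K\cup L$, hence the denominator, unchanged, all three auxiliary sums have the allowed shape with one fewer value $s$, and the induction closes with no case analysis and no sign bookkeeping. Nothing of substance is lost, and your route makes it transparent that the lemma is a purely formal (Boolean) consequence of~(\ref{eq:11}). Two minor points of hygiene: the split $\Psi=\Psi_e+\Psi_s$ is best justified simply as a partition of the summation set (for each fixed choice of $(\mu_i)_{i\ne j}$, the map $\mu_j\mapsto\mu^j=\mu^{j-1}\mu_j$ is a bijection of $\{e,s\}$); the prefix $\mu^{j-1}$ is determined by the other coordinates of the summation variable, not by the constraints alone, so ``determined by the remaining data'' should be read in that sense. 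Also, your closing paragraph about the anti-diagonal in $(G_\alpha/B_\alpha)^2$ not being $T$-stable is correct as motivation, but it plays no role in the proof itself.
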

\begin{proof}
Let $|l|$ denote the number of indices $p\in L$ such that $l_p=s$. We are going to prove this lemma by induction on $|l|$.
The case $|l|=0$ is dealt with by~(\ref{eq:11}).

Now suppose that $l_p=s$ for some $p\in L$. We denote $q=\max\,[0,p)\setminus L$. We have $(q,p]\subset L$.
First consider the case $q\in K\cup\{0\}$.
The condition $\mu_p=l_p=s$ is equivalent to $\mu^p=k_ql_{q+1}\cdots l_p$,
where $k_0=e$. Thus~(\ref{eq:14}) can be rewritten as follows:
$$
\sum_{\ov{\forall i\in K\cup\{p\}: \mu^i=k_i}{\forall i\in L\setminus\{p\}:\mu_i=l_i}}\frac{f(\mu)}{\prod_{i\in[1,\l]\setminus(K\cup L)}\mu^i(-\alpha)},
$$
where $k_p=k_ql_{q+1}\cdots l_p$. By the inductive hypothesis, this sum belongs to $S$.

Finally, consider the case $q>0$ and $q\notin K$. 
We define the functions $k':K\cup\{p\}\to\{e,s\}$ and $k'':K\cup\{q\}\to\{e,s\}$
by $k'_i=k''_i=k_i$ for $i\in K$, $k'_p=e$ and $k''_q=l_{q+1}\cdots l_{p-1}$.
The inductive hypothesis applied to the pair of functions $l:L\setminus\{p\}\to\{e,s\}$ and \linebreak $k':K\cup\{p\}\to\{e,s\}$ implies
\begin{equation}\label{eq:y}
\begin{array}{l}
\hspace{-30pt}
\displaystyle\sum_{\ov{\forall i\in K: \mu^i=k_i,\,\mu^q=l_{q+1}\cdots l_{p-1}}{\forall i\in L\setminus\{p\}: \mu_i=l_i,\,\mu_p=e}}\frac{f(\mu)}{\mu^q(-\alpha)\prod_{i\in[1,\l]\setminus(K\cup L\cup\{q\})}\mu^i(-\alpha)}\\[30pt]
\hspace{90pt}
\displaystyle+\sum_{\ov{\forall i\in K: \mu^i=k_i,\,\mu^q=l_{q+1}\cdots l_{p-1}s}{\forall i\in L\setminus\{p\}: \mu_i=l_i,\,\mu_p=s}}\frac{f(\mu)}{\mu^q(-\alpha)\prod_{i\in[1,\l]\setminus(K\cup L\cup\{q\})}\mu^i(-\alpha)}\in S
\end{array}
\end{equation}
and applied to the pair of functions $l:L\setminus\{p\}\to\{e,s\}$ и $k'':K\cup\{q\}\to\{e,s\}$ implies
\begin{equation}\label{eq:yy}
\begin{array}{l}
\hspace{-30pt}
\displaystyle\sum_{\ov{\forall i\in K:\mu^i=k_i,\,\mu^q=l_{q+1}\cdots l_{p-1}}{\forall i\in L\setminus\{p\}:\mu_i=l_i\,,\,\mu_p=e}}\frac{f(\mu)}{\mu^p(-\alpha)\prod_{i\in[1,\l]\setminus(K\cup L\cup\{q\})}\mu^i(-\alpha)}\\[30pt]
\hspace{90pt}
\displaystyle+\sum_{\ov{\forall i\in K:\mu^i=k_i,\,\mu^q=l_{q+1}\cdots l_{p-1}}{\forall i\in L\setminus\{p\}:\mu_i=l_i\,,\,\mu_p=s}}\frac{f(\mu)}{\mu^p(-\alpha)\prod_{i\in[1,\l]\setminus(K\cup L\cup\{q\})}\mu^i(-\alpha)}\in S.
\end{array}
\end{equation}
In the first summands of both formulas~(\ref{eq:y}) and~(\ref{eq:yy}), we have
$$
\mu^p(-\alpha)=\mu^q\mu_{q+1}\cdots\mu_{p-1}(-\alpha)=\mu^ql_{q+1}\cdots l_{p-1}(-\alpha)=\frac{l_{q+1}\cdots l_{p-1}(\alpha)}{\alpha}\mu^q(-\alpha).
$$
In the second summands of both formulas~(\ref{eq:y}) and~(\ref{eq:yy}), we have
$$
\mu^p(-\alpha)=\mu^q\mu_{q+1}\cdots\mu_{p-1}s(-\alpha)=-\mu^ql_{q+1}\cdots l_{p-1}(-\alpha)=-\frac{l_{q+1}\cdots l_{p-1}(\alpha)}{\alpha}\mu^q(-\alpha).
$$
Therefore, if we multiply~(\ref{eq:yy}) by $-\alpha/l_{q+1}\cdots l_{p-1}(\alpha)$ (which equals $\pm1$)
and add it to~(\ref{eq:y}), then the first summands cancel out and we get
\begin{multline*}
\sum_{\ov{\forall i\in K: \mu^i=k_i,\,\mu^q=l_{q+1}\cdots l_{p-1}s}{\forall i\in L\setminus\{p\}: \mu_i=l_i,\,\mu_p=s}}\frac{f(\mu)}{\mu^q(-\alpha)\prod_{i\in[1,\l]\setminus(K\cup L\cup\{q\})}\mu^i(-\alpha)}\\
+
\sum_{\ov{\forall i\in K:\mu^i=k_i,\,\mu^q=l_{q+1}\cdots l_{p-1}}{\forall i\in L\setminus\{p\}:\mu_i=l_i\,,\,\mu_p=s}}\frac{f(\mu)}{\mu^q(-\alpha)\prod_{i\in[1,\l]\setminus(K\cup L\cup\{q\})}\mu^i(-\alpha)}
\in S.
\end{multline*}
It is obvious that the sum at the left-hand side is exactly the sum
from the formulation of the lemma (divided into two sums according to the value of $\mu^q$).
\end{proof}

\subsection{$(p,w)$-pairs}\label{pw} Let $s=(s_1,\ldots,s_r)$ and $s'=(s'_1,\ldots,s'_{r'})$ be sequences of simple reflections,
$p:[1,r]\to[1,r']$ be a monotone embedding and $w\in W$. A pair of galleries $(\gamma,\delta)\in\Gamma_s\times\Gamma_{s'}$
is called a {\it $(p,w)$-pair} if
$$
\delta^{p(i)}s'_{p(i)}(\delta^{p(i)})^{-1}=w\gamma^is_i(\gamma^i)^{-1}w^{-1}
$$
for all $i=1,\ldots,r$. We shall write these equations in the more convenient form
$$
\bbeta_{p(i)}(\delta)=\pm w\bbeta_i(\gamma).
$$

\begin{example}{\rm Let $s$ and $\gamma$ be beginnings of $s'$ and $\delta$ respectively having the same lengths.
Then $(\gamma,\delta)$ is an $(e,p)$-pair, where $p$ is the natural embedding $[1,|s|]\to[1,|s'|]$.}
\end{example}

\begin{example}{\rm Let $s$ and $\gamma$ be the ends of $s'$ and $\delta$ respectively of length $r\le|s'|$.
Then $(\gamma,\delta)$ is a $(\delta^{|s'|-r},p)$-pair, where $p(i)=|s'|-r+i$.}
\end{example}

\begin{lemma}\label{lemma:x2}${}$\!\!\!
Let $\bbeta_{p(k)}(\delta)=\epsilon_kw\bbeta_k(\gamma)$ for any $k=1,\ldots,|\gamma|$, where $\epsilon_k=\pm1$.
Then $\bbeta_{p(k)}(\f_{p(i)}\delta)=\epsilon_k w\bbeta_k(\f_i\gamma)$ for any $i,k=1,\ldots,|\gamma|$.
In particular, if $(\gamma,\delta)$ is a $(p,w)$-pair, then $(\f_i\gamma,\f_{p(i)}\delta)$ is also a $(p,w)$-pair.
\end{lemma}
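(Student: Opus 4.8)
The plan is to reduce everything to the explicit formulas for $\bbeta_k(\f_i\gamma)$ recorded in~(\ref{eq:-2}) and to exploit the monotonicity of $p$. Applying~(\ref{eq:-2}) to the galleries $\delta$ and $\gamma$ separately, I would write
$$
\bbeta_{p(k)}(\f_{p(i)}\delta)=s_{\bbeta_{p(i)}(\delta)}^{\cond{p(i)\le p(k)}}\bbeta_{p(k)}(\delta),\qquad
\bbeta_k(\f_i\gamma)=s_{\bbeta_i(\gamma)}^{\cond{i\le k}}\bbeta_k(\gamma).
$$
The crucial point is that, since $p$ is a monotone embedding, the inequality $p(i)\le p(k)$ holds exactly when $i\le k$, so the two Iverson exponents $\cond{p(i)\le p(k)}$ and $\cond{i\le k}$ coincide. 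This lets me split into the cases $i>k$ and $i\le k$ and treat both sides in step.

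When $i>k$ both exponents vanish, and the claimed identity $\bbeta_{p(k)}(\f_{p(i)}\delta)=\epsilon_k w\bbeta_k(\f_i\gamma)$ reduces at once to the hypothesis $\bbeta_{p(k)}(\delta)=\epsilon_k w\bbeta_k(\gamma)$. The substantive case is $i\le k$, where both exponents equal $1$. Here I would feed the hypothesis $\bbeta_{p(i)}(\delta)=\epsilon_i w\bbeta_i(\gamma)$ (the instance $k=i$) into the reflection $s_{\bbeta_{p(i)}(\delta)}$. Since a reflection is unchanged when its root is negated, the sign $\epsilon_i$ disappears, and the conjugation rule $s_{w\beta}=w s_\beta w^{-1}$ from the Root system subsection gives $s_{\bbeta_{p(i)}(\delta)}=w\,s_{\bbeta_i(\gamma)}\,w^{-1}$.

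Combining this with the hypothesis at index $k$, I then compute
$$
\bbeta_{p(k)}(\f_{p(i)}\delta)=w\,s_{\bbeta_i(\gamma)}\,w^{-1}\cdot\epsilon_k w\bbeta_k(\gamma)
=\epsilon_k\,w\,s_{\bbeta_i(\gamma)}\bbeta_k(\gamma)=\epsilon_k\,w\,\bbeta_k(\f_i\gamma),
$$
where the last equality is~(\ref{eq:-2}) once more (the exponent being $1$ as $i\le k$). This establishes the main identity for all $i,k$. For the final assertion, note that $(\gamma,\delta)$ being a $(p,w)$-pair means precisely that the hypothesis holds for some choice of signs $\epsilon_k=\pm1$; the identity just proved then yields $\bbeta_{p(k)}(\f_{p(i)}\delta)=\pm w\bbeta_k(\f_i\gamma)$, which is exactly the defining condition for $(\f_i\gamma,\f_{p(i)}\delta)$ to be a $(p,w)$-pair. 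I do not anticipate a genuine obstacle here: the only delicate point is the bookkeeping of the Iverson exponents, and the monotonicity of $p$ disposes of it cleanly.
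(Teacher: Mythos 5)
Your proof is correct and follows essentially the same route as the paper's: both rest on formula~(\ref{eq:-2}), the equivalence $p(i)\le p(k)\Leftrightarrow i\le k$ from monotonicity of $p$, the invariance $s_{-\beta}=s_\beta$, and the conjugation rule $ws_\beta w^{-1}=s_{w\beta}$. The paper merely compresses the argument into a single chain of equalities carrying the exponent $\cond{i\le k}$ throughout, whereas you split into the cases $i>k$ and $i\le k$; this is a purely presentational difference.
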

\begin{proof} Applying~(\ref{eq:-2}) twice, we get
$$
\bbeta_{p(k)}(\f_{p(i)}\delta)=s_{\bbeta_{p(i)}(\delta)}^{\cond{p(i)\le p(k)}}\bbeta_{p(k)}(\delta)
=\epsilon_k s_{w\bbeta_i(\gamma)}^{\cond{i\le k}}w\bbeta_{k}(\gamma)=\epsilon_k ws_{\bbeta_i(\gamma)}^{\cond{i\le k}}\bbeta_{k}(\gamma)
=\epsilon_k w\bbeta_k(\f_i\gamma).
$$
\end{proof}


\subsection{\texorpdfstring{Category $\Fold$}{Category tilde Seq}}\label{morph}

The objects of this category are finite sequences of simple reflections
(including the empty one). A morphism from $s=(s_1,\ldots,s_r)$ to
$s'=(s'_1,\ldots,s'_{r'})$ is a triple $(p,w,\phi)$, where

\begin{enumerate}
\item\label{F:1} $p:[1,r]\to[1,r']$ is monotone embedding;
\item\label{F:2} $w\in W$;
\item\label{F:3} $\phi:\Gamma_s\to\Gamma_{s'}$ is a map such that $(\gamma,\phi(\gamma))$
                 is a $(p,w)$-pair and $\phi(\f_i\gamma)=\f_{p(i)}\phi(\gamma)$ for any
                 $\gamma\in\Gamma_s$ and $i=1,\ldots,r$.
\end{enumerate}
The first part of the last condition can be written in the following form:
$$
\bbeta_{p(i)}(\phi(\gamma))=\epsilon_i w\bbeta_i(\gamma)\quad\forall
i=1,\ldots,r,
$$
where $\epsilon_i=\pm1$. Lemma~\ref{lemma:x2} shows that the numbers $\epsilon_i$
do not depend on $\gamma$.
We call the sequence $(\epsilon_1,\ldots,\epsilon_r)$ and $w\in W$ respectively
the {\it sign}\label{signepsilon} and the {\it rotation}\label{rotw} of $(p,w,\phi)$.
The map between the $T$-fixed points is $(p,w,\phi)^T=\phi$.
We say that the sign is {\it positive}\label{possign} if $\epsilon_i=1$ for any $i$ and that the rotation is
{\it identical}\label{idrot} if $w=e$. Both these conditions are satisfied for
the identity morphism $(\id_{[1,r]},e,\id_{\Gamma_s})$ from $s$ to itself.

The composition of two morphisms is given by the natural formula
$$
(p',w',\phi')\circ(p,w,\phi)=(p'p,w'w,\phi'\phi).
$$
As the composition $p'p$ is monotone and $w'w\in W$, it remains
to check condition~\ref{F:3}. Let $(\epsilon_1,\ldots,\epsilon_{|s|})$ be the sign
of $(p,w,\phi)$ and $(\epsilon'_1,\ldots,\epsilon'_{|s'|})$ be the sign
of $(p',w',\phi')$. Then we have
$$
\bbeta_{p'p(i)}(\phi'\phi(\gamma))=\epsilon'_{p(i)}
w'\bbeta_{p(i)}(\phi(\gamma))=\epsilon'_{p(i)}\epsilon_iw'w\bbeta_i(\gamma),
$$
$$
\phi'\phi(\f_i(\gamma))=\phi'\big(\f_{p(i)}(\phi(\gamma))\big)=\f_{p'p(i)}(\phi'\phi(\gamma)).
$$
Hence the composition $(p',w',\phi')(p,w,\phi)$ is a morphism of sign
$(\epsilon'_{p(1)}\epsilon_1,\ldots,\epsilon'_{p(r)}\epsilon_r)$.

Property~\ref{F:3} shows that we can reconstruct the whole morphism $(p,w,\phi)$,
once we know $p$, $w$ and $\phi(\gamma)$ for some gallery $\gamma\in\Gamma_s$.
This morphism always exists if $(\gamma,\phi(\gamma))$ is chosen to be a $(p,w)$-pair.
The exact formulation is as follows.

\begin{lemma}\label{lemma:0}
Let $\gamma\in\Gamma_s$ and $\delta\in\Gamma_{s'}$ be galleries, $p:[1,|s|]\to[1,|s'|]$
be a monotone embedding and $w\in W$ such that $(\gamma,\delta)$ is a $(p,w)$-pair.
There exists a unique map\linebreak $\phi:\Gamma_s\to\Gamma_{s'}$ such that $\phi(\gamma)=\delta$
and $(p,w,\phi)$ is a morphism of $\Fold$.
\end{lemma}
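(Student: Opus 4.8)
The plan is to exploit the fact that the folding operators $\f_1,\ldots,\f_r$ act on $\Gamma_s$ by flipping coordinates: since $s_i$ is an involution in $W$, the operator $\f_i$ interchanges the two possible values $e$ and $s_i$ of the $i$th entry, so the $\f_i$ are pairwise commuting involutions and the group they generate acts simply transitively on $\Gamma_s$. Consequently, for every $\gamma'\in\Gamma_s$ there is a well-defined subset $S(\gamma')=\{i\in[1,r]\suchthat\gamma'_i\neq\gamma_i\}$ with $\gamma'=\big(\prod_{i\in S(\gamma')}\f_i\big)\gamma$, and up to reordering this is the only way to reach $\gamma'$ from $\gamma$ by foldings.

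For uniqueness, I would argue that the second part of condition~\ref{F:3}, $\phi(\f_i\gamma'')=\f_{p(i)}\phi(\gamma'')$, together with $\phi(\gamma)=\delta$, forces $\phi(\gamma')=\big(\prod_{i\in S(\gamma')}\f_{p(i)}\big)\delta$ for every $\gamma'$. Indeed, applying the relation successively along any chain of foldings that carries $\gamma$ to $\gamma'$ pushes each $\f_i$ past $\phi$ and turns it into $\f_{p(i)}$; since the $p(i)$ are distinct ($p$ being injective) the resulting $\f_{p(i)}$ commute, so the value does not depend on the order in which the foldings are applied. This pins down $\phi$ completely.

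For existence, I would take this formula as the definition: set $\phi(\gamma')=\big(\prod_{i\in S(\gamma')}\f_{p(i)}\big)\delta$. Well-definedness is immediate from the commutativity of the $\f_{p(i)}$, and $\phi(\gamma)=\delta$ since $S(\gamma)=\emptyset$. To verify the folding-compatibility part of~\ref{F:3}, note that $S(\f_i\gamma')=S(\gamma')\,\triangle\,\{i\}$, so that $\phi(\f_i\gamma')$ differs from $\phi(\gamma')$ by precisely the extra involutive factor $\f_{p(i)}$; hence $\phi(\f_i\gamma')=\f_{p(i)}\phi(\gamma')$.

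The remaining, and main, point is to check that $(\gamma',\phi(\gamma'))$ is a $(p,w)$-pair for every $\gamma'$, and this is exactly what Lemma~\ref{lemma:x2} delivers. Starting from the hypothesis that $(\gamma,\delta)$ is a $(p,w)$-pair, the ``in particular'' clause of that lemma shows that folding $\gamma$ at $i$ and $\delta$ at $p(i)$ simultaneously again produces a $(p,w)$-pair. Iterating over the indices in $S(\gamma')$ — which is legitimate since both families of foldings commute — shows that $(\gamma',\phi(\gamma'))=\big(\prod_{i\in S(\gamma')}\f_i\,\gamma,\ \prod_{i\in S(\gamma')}\f_{p(i)}\,\delta\big)$ is a $(p,w)$-pair. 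This is the only place where the $(p,w)$-pair hypothesis enters, and I expect it to be the crux of the argument; everything else is the bookkeeping of commuting coordinate flips.
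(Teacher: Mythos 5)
Your proposal is correct, and at the decisive step it takes a genuinely different (and leaner) route than the paper. The two proofs agree on the skeleton: both define $\phi$ by transporting foldings along $p$, and both obtain the $(p,w)$-pair condition for an arbitrary gallery by iterating Lemma~\ref{lemma:x2} starting from the pair $(\gamma,\delta)$. The difference is in how consistency of the definition is secured. The paper extends $\phi$ by the rule $\phi(\f_{i_n}\cdots\f_{i_1}\gamma)=\f_{p(i_n)}\cdots\f_{p(i_1)}\delta$ for \emph{arbitrary} folding words, and its main labour is the resulting well-definedness claim, that~(\ref{eq:1}) implies~(\ref{eq:2}); this is proved by converting both gallery equalities into identities among reflections in $W$ (formulas~(\ref{eq:4}) and~(\ref{eq:5}), via~(\ref{eq:-2})) and transferring one into the other by conjugation with $w$, using the monotonicity of $p$ — so the $(p,w)$-pair hypothesis enters the paper's well-definedness argument. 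You bypass all of this by exploiting that $\Gamma_s$ is a torsor under the group $(\Z/2)^{|s|}$ generated by the commuting involutions $\f_i$: defining $\phi$ on the canonical representative $\prod_{i\in S(\gamma')}\f_i$ makes well-definedness a non-issue, and folding compatibility reduces to the symmetric-difference computation $S(\f_i\gamma')=S(\gamma')\,\triangle\,\{i\}$. Your route also shows, in hindsight, that the implication~(\ref{eq:1})$\Rightarrow$(\ref{eq:2}) is purely combinatorial and needs neither the pair hypothesis nor any Weyl-group computation: a folding word fixes $\gamma$ exactly when each index occurs in it an even number of times, and injectivity of $p$ preserves these parities, so the image word fixes $\delta$. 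What each approach buys: the paper's computation stays within the reflection-product formalism it uses throughout (and records identities~(\ref{eq:4})--(\ref{eq:5}) explicitly), while your argument isolates cleanly the single place where the hypothesis that $(\gamma,\delta)$ is a $(p,w)$-pair is genuinely needed, namely Lemma~\ref{lemma:x2}.
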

\begin{proof} We want to extend $\phi$ to $\Gamma_s$ by the rule
$$
\phi(\f_{i_n}\cdots\f_{i_2}\f_{i_1}\gamma)=\f_{p(i_n)}\cdots\f_{p(i_2)}\f_{p(i_1)}\delta
$$
for any indices $i_1,\ldots,i_n\in[1,|s|]$. Obviously, we need to prove that $\phi$
given by this definition is well-defined. Once this is proved, the triple $(p,w,\phi)$ will be a morphism of $\Fold$,
as by Lemma~\ref{lemma:x2} and the definition of
a $(p,w)$-pair, we have
\begin{equation}\label{eq:3}
\bbeta_{p(i)}(\f_{p(i_n)}\cdots\f_{p(i_2)}\f_{p(i_1)}\delta)=\pm w\bbeta_i(\f_{i_n}\cdots\f_{i_2}\f_{i_1}\gamma).
\end{equation}
As the folding operators are idempotent, it suffices to prove that
\begin{equation}~\label{eq:1}
\f_{i_n}\cdots\f_{i_2}\f_{i_1}\gamma=\gamma,
\end{equation}
implies
\begin{equation}\label{eq:2}
\f_{p(i_n)}\cdots\f_{p(i_2)}\f_{p(i_1)}\delta=\delta.
\end{equation}
Applying $()^i$ to both sides of~(\ref{eq:1}), using~(\ref{eq:-2}) and cancelling $\gamma^i$, we get that~(\ref{eq:1}) is equivalent to
\begin{equation}\label{eq:4}
s_{\tau_n}^{\cond{i_n\le i}}\cdots s_{\tau_2}^{\cond{i_2\le i}}s_{\tau_1}^{\cond{i_1\le i}}=e\quad \forall i=1,\ldots,|s|,
\end{equation}
where $\tau_k=\bbeta_{i_k}(\f_{i_{k-1}}\cdots\f_{i_2}f_{i_1}\gamma)$.
Similarly,~(\ref{eq:2}) is equivalent to
\begin{equation}\label{eq:5}
s_{\sigma_n}^{\cond{p(i_n)\le j}}\cdots s_{\sigma_2}^{\cond{p(i_2)\le j}}s_{\sigma_1}^{\cond{p(i_1)\le j}}=e\quad \forall j=1,\ldots,|s'|,
\end{equation}
where $\sigma_k=\bbeta_{p(i_k)}(\f_{p(i_{k-1})}\cdots\f_{p(i_2)}\f_{p(i_1)}\delta)=\pm w\bbeta_{i_k}(\f_{i_{k-1}}\cdots\f_{i_2}\f_{i_1}\gamma)=\pm w\tau_k$.
We have applied ~(\ref{eq:3}) to obtain the middle equality.

Thus we must prove that~(\ref{eq:4}) implies~(\ref{eq:5}). Take an arbitrary $j=1,\ldots,|s'|$.
If $j$ is less than all elements $p(i_1),\ldots,p(i_n)$, then~(\ref{eq:5}) is simply the equality
$e=e$. Otherwise, take the maximal $k$
such that $p(i_k)\le j$. We have the following equivalences:
$$
p(i_m)\le j\Leftrightarrow p(i_m)\le p(i_k)\Leftrightarrow i_m\le i_k
$$
for all $m=1,\ldots,n$. Therefore, conjugating~(\ref{eq:4}) for $i=i_k$ with $w$, we get~(\ref{eq:5}).
\end{proof}

The proof of the following simple lemma is left to the reader.

\begin{lemma}\label{lemma:3}
Let $(p,w,\phi):s\to s'$ be a morphism of $\Fold$. Then $\phi$ is an embedding and $(p,w,\phi)$
is a monomorphism of $\Fold$.
For any $\gamma\in\Gamma_s$, a gallery $\delta\in\Gamma_{s'}$ belongs to $\im\phi$
if and only if $\delta_j=\phi(\gamma)_j$ for $j\in[1,|s'|]\setminus\im p$.
\end{lemma}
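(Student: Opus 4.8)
The plan is to prove the three assertions in turn, using two structural facts about $\Fold$-morphisms: the defining identity $\bbeta_{p(i)}(\phi(\gamma))=\epsilon_iw\bbeta_i(\gamma)$ with $\epsilon_i\in\{-1,1\}$ independent of $\gamma$, and the intertwining rule $\phi(\f_i\gamma)=\f_{p(i)}\phi(\gamma)$. Throughout I use that $p$ is strictly monotone (hence injective, with $p(a)\le p(b)\Leftrightarrow a\le b$), that $w$ is invertible in $W$, that each $\f_j$ changes only the $j$th entry of a gallery and is an involution, and that foldings at distinct places commute, so that any two galleries of $\Gamma_s$ differ by folding at exactly the places where their entries disagree.

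\emph{Injectivity of $\phi$.} This is the crux of the lemma. I claim that $\phi(\gamma)$ already determines $\gamma$. First, $\bbeta_i(\gamma)$ is recoverable from $\phi(\gamma)$: the defining identity gives $\bbeta_i(\gamma)=\epsilon_iw^{-1}\bbeta_{p(i)}(\phi(\gamma))$, and $\epsilon_i,w,p$ depend only on the morphism. Hence if $\phi(\gamma)=\phi(\gamma')$, then $\bbeta_i(\gamma)=\bbeta_i(\gamma')$ for all $i$. I then recover $\gamma$ from the family $(\bbeta_i(\gamma))_i$ by induction on $i$, showing $\gamma^i=(\gamma')^i$. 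The base case $\gamma^0=e=(\gamma')^0$ is trivial. For the step, the key observation is that $\bbeta_i(\gamma)=\gamma^i(-\alpha_i)$ equals $\gamma^{i-1}(-\alpha_i)$ when $\gamma_i=e$ and $-\gamma^{i-1}(-\alpha_i)$ when $\gamma_i=s_i$ (using $s_{\alpha_i}(-\alpha_i)=\alpha_i$); since a root is nonzero, these two values are distinct, so once $\gamma^{i-1}=(\gamma')^{i-1}$ is known the common value $\bbeta_i(\gamma)=\bbeta_i(\gamma')$ forces $\gamma_i=\gamma'_i$, whence $\gamma^i=(\gamma')^i$. Thus $\gamma=\gamma'$ and $\phi$ is injective; being a map of finite discrete sets, it is an embedding.

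\emph{Monomorphism.} Here I simply combine injectivity of the three data. Suppose $(p,w,\phi)\circ(q,v,\psi)=(p,w,\phi)\circ(q',v',\psi')$ for morphisms $(q,v,\psi),(q',v',\psi')\colon t\to s$. By the composition formula this reads $(pq,wv,\phi\psi)=(pq',wv',\phi\psi')$. Injectivity of $p$ as a set map yields $q=q'$ from $pq=pq'$; invertibility of $w$ yields $v=v'$ from $wv=wv'$; and injectivity of $\phi$ yields $\psi=\psi'$ from $\phi\psi=\phi\psi'$. Hence the two morphisms coincide and $(p,w,\phi)$ is a monomorphism.

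\emph{Image characterization.} Fix $\gamma\in\Gamma_s$ and $\delta\in\Gamma_{s'}$. For the forward inclusion, if $\delta=\phi(\gamma')$ I write $\gamma'$ as a product of foldings applied to $\gamma$, say $\gamma'=\f_{i_n}\cdots\f_{i_1}\gamma$; then $\delta=\f_{p(i_n)}\cdots\f_{p(i_1)}\phi(\gamma)$, so $\delta$ is obtained from $\phi(\gamma)$ by folding only at places of $\im p$, and therefore $\delta_j=\phi(\gamma)_j$ for every $j\in[1,|s'|]\setminus\im p$. For the converse, assume $\delta_j=\phi(\gamma)_j$ for all $j\notin\im p$, and set $I=\{i\in[1,|s|]\suchthat\delta_{p(i)}\ne\phi(\gamma)_{p(i)}\}$. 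Put $\gamma'=\prod_{i\in I}\f_i\,\gamma$; since the $\f_i$ commute this is well defined, and $\phi(\gamma')=\prod_{i\in I}\f_{p(i)}\,\phi(\gamma)$ toggles the entry of $\phi(\gamma)$ at each place $p(i)$, $i\in I$, and leaves all others unchanged. At places $p(i)$ with $i\in I$ this produces exactly $\delta_{p(i)}$ (the two entries lie in a two-element set and were unequal); at places $p(i)$ with $i\notin I$ the entries already agreed; and at places outside $\im p$ the foldings change nothing while $\delta$ agrees with $\phi(\gamma)$ by hypothesis. Hence $\phi(\gamma')=\delta$, so $\delta\in\im\phi$.

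I expect the injectivity step to be the only place requiring an idea, namely the sign dichotomy for $\bbeta_i$ that lets one read off $\gamma_i$ from $\gamma^{i-1}$ and $\bbeta_i(\gamma)$; the remaining two parts are then formal consequences of the composition formula and of the fact that foldings toggle entries independently.
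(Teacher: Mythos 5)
Your proof is correct. Note that the paper itself offers no proof of this lemma (it is explicitly ``left to the reader''), so there is nothing to compare against; your argument fills the gap soundly, and all three parts check out: the $\bbeta$-dichotomy correctly recovers $\gamma_i$ from $\gamma^{i-1}$ and $\bbeta_i(\gamma)$, the monomorphism part is the right formal consequence of injectivity of $p$, $w$, $\phi$ under the composition rule $(p,w,\phi)\circ(q,v,\psi)=(pq,wv,\phi\psi)$, and the image characterization via toggling is exactly right.

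One small remark: the injectivity step does not actually need the sign dichotomy for $\bbeta_i$. The same toggle bookkeeping you use for the image characterization already gives it: if $\phi(\gamma)=\phi(\gamma')$, write $\gamma'=\f_{i_n}\cdots\f_{i_1}\gamma$ with $i_1<\cdots<i_n$ the places where $\gamma$ and $\gamma'$ disagree; then $\phi(\gamma')=\f_{p(i_n)}\cdots\f_{p(i_1)}\phi(\gamma)$ differs from $\phi(\gamma)$ at exactly the $n$ distinct places $p(i_1),\ldots,p(i_n)$ (here injectivity of $p$ is used), so $n=0$ and $\gamma=\gamma'$. This makes the whole lemma a single combinatorial observation about foldings, without invoking the $(p,w)$-pair condition or the constancy of the signs $\epsilon_i$ at all; your route is equally valid but uses slightly more of the structure than necessary.
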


\subsection{Restriction of cohomologies along a morphism}
We are going to prove that morphisms of $\Fold$
give rise to restrictions on the level of cohomologies similarly to how\linebreak
$T$-equivariant continuous maps between $T$-spaces $Y\to X$ give rise to
maps $H^\bullet_T(X)\to H^\bullet_T(Y)$.

\begin{theorem}\label{theorem:1:rocaam}
Let $(p,w,\phi):s\to s'$ be a morphism of $\Fold$ and $g\in\X(s')$.
We define a map from $\Gamma_s$ to $S$ by $g_{(p,w,\phi)}(\gamma)=w^{-1}(g\circ\phi(\gamma))$. 
Then $g_{(p,\phi,w)}\in\X(s)$.
\end{theorem}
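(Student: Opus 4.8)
The plan is to verify H\"arterich's criterion (Proposition~\ref{proposition:x:1}) for the map $f:=g_{(p,w,\phi)}$. First I would reduce to the coefficient ring $\Z'$: writing $g=\zeta\big(\sum_tg_t\otimes\lm_t\big)$ with $g_t\in\X(s',\Z')$ via the bottom isomorphism of~(\ref{eq:-3}), and using that the automorphism $w^{-1}$ commutes with the specialization $\zeta'$ (both $W$-actions are induced from the $\Z$-linear action on characters), one checks $f=\zeta\big(\sum_t(g_t)_{(p,w,\phi)}\otimes w^{-1}(\lm_t)\big)$. Hence it suffices to treat $g\in\X(s',\Z')$ and prove $f\in\X(s,\Z')$. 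This compatibility genuinely has to be verified, because $g\mapsto g_{(p,w,\phi)}$ is only $w^{-1}$-semilinear rather than $S$-linear.

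Now fix a positive root $\alpha$ and a gallery $\gamma\in\Gamma_s$, and let $\beta$ be the positive root with $w\alpha=\pm\beta$. I must show that $\alpha^{|J_\alpha(\gamma)|}$ divides $\sum_\mu(-1)^{|J_\alpha(\mu)|}f(\mu)$, the sum taken over $\mu\sim_\alpha\gamma$ with $J_\alpha(\mu)\subset J_\alpha(\gamma)$. Since $f(\mu)=w^{-1}(g(\phi(\mu)))$ and the automorphism $w$ carries the ideal $(\alpha^{|J_\alpha(\gamma)|})$ onto $(\beta^{|J_\alpha(\gamma)|})$, applying $w$ turns the goal into $\beta^{|J_\alpha(\gamma)|}\mid\sum_\mu(-1)^{|J_\alpha(\mu)|}g(\phi(\mu))$. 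The key combinatorial input is that, by the defining relation $\bbeta_{p(i)}(\phi(\mu))=\epsilon_iw\bbeta_i(\mu)$ together with $\phi\f_i=\f_{p(i)}\phi$, the assignment $\mu\mapsto\phi(\mu)$ carries the galleries $\mu\sim_\alpha\gamma$ bijectively onto those $\nu\sim_\beta\phi(\gamma)$ agreeing with $\phi(\gamma)$ outside $p(M_\alpha(\gamma))$; moreover $p$ matches the $\pm\alpha$-walls of $\mu$ with the $\pm\beta$-walls of $\phi(\mu)$ up to the fixed signs $\epsilon_i$, so that the $J_\alpha$-data of $\mu$ translates into $J_\beta$-data of $\phi(\mu)$ on the positions $p(M_\alpha(\gamma))$, the remaining $\beta$-walls of $\phi(\gamma)$ staying frozen.

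The required divisibility thereby becomes a \emph{partial} localized sum for $g$ ranging only over the $\beta$-positions in $p(J_\alpha(\gamma))$, with all other $\beta$-walls of $\phi(\gamma)$ frozen to their values in $\phi(\gamma)$. To reach the hypotheses of Lemma~\ref{lemma:y} I would pass to the rank-$2$ setting of Section~\ref{rank2}: with $\l=|M_\beta(\phi(\gamma))|$, read off from $g$ a function $\tilde g$ on $\Gamma^\2_{\beta,\l}$ by $\tilde g(\nu')=g(\nu)$, where $\nu'\in\Gamma^\2_{\beta,\l}$ corresponds to the $\sim_\beta$-gallery $\nu$ (frozen outside $M_\beta(\phi(\gamma))$) carrying the same wall-signs. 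Because H\"arterich's criterion for $g$ at the root $\beta$ is supported precisely on such $\sim_\beta$-classes and $M_\beta$ is constant along them, it specializes to the rank-$2$ criterion for $\tilde g$, so $\tilde g\in\X^\2_{\beta,\l}$. Applying Lemma~\ref{lemma:y}, with the free positions those corresponding to $p(J_\alpha(\gamma))$ and the frozen ones encoded by suitable functions $k\colon K\to\{e,s\}$ and $l\colon L\to\{e,s\}$, yields $\sum\tilde g(\nu')/\prod_{\mathrm{free}}(\nu')^i(-\beta)\in S$; since every free factor $(\nu')^i(-\beta)$ equals $\pm\beta$, this is exactly the desired relation $\beta^{|J_\alpha(\gamma)|}\mid\sum_\mu(-1)^{|J_\alpha(\mu)|}g(\phi(\mu))$ up to a global sign.

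The main obstacle is the rank-$2$ reduction of the last paragraph: setting up the correspondence $\nu'\leftrightarrow\nu$ so that it preserves wall-signs and $J_\beta$-data, verifying that $\tilde g$ really lies in $\X^\2_{\beta,\l}$, and translating the constraints ``$\mu\sim_\alpha\gamma$ and $J_\alpha(\mu)\subset J_\alpha(\gamma)$'' into the correct freezing data $(K,k)$ and $(L,l)$. This last translation is exactly why the full strength of Lemma~\ref{lemma:y} is needed rather than merely~(\ref{eq:11}): the $\beta$-walls of $\phi(\gamma)$ lying outside the image of $p$ must be frozen to their possibly nontrivial values $\phi(\gamma)_j$, which is the case $l\not\equiv e$.
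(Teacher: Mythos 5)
Your proposal follows the paper's proof in structure: reduce to coefficients $\Z'$ via Diagram~(\ref{eq:-3}) and the $W$-equivariance of $\zeta'$ (the paper performs this reduction at the end rather than the beginning, but the content is identical), translate H\"arterich's criterion for $g_{(p,w,\phi)}$ through the morphism using fold-equivariance of $\phi$ (this is the content of the paper's Lemmas~\ref{lemma:4}--\ref{lemma:6}), and reduce the resulting congruence to Lemma~\ref{lemma:y}, whose full strength (nontrivial $l$) you correctly identify as essential. However, there is a genuine gap at the step you yourself call the main obstacle: the claim $\tilde g\in\X^\2_{\beta,\l}$. What your specialization argument actually yields is that $\tilde g$ satisfies certain congruences modulo powers of $\beta$ in $S_{\Z'}$. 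But $\X^\2_{\beta,\l}$ is by definition (Section~\ref{rank2}) the \emph{image} of the restriction $H^\bullet_T(\BS^\2_{\beta,\l})\to H^\bullet_T(\Gamma^\2_{\beta,\l})$ in $T$-equivariant cohomology, and you have no converse criterion identifying that image with the set of functions satisfying such congruences. Proposition~\ref{proposition:x:1} cannot be cited for this: it concerns the varieties $\BS(s)$ of $G$ indexed by sequences of \emph{simple} reflections, whereas $\beta=\pm w\alpha$ is in general a non-simple positive root, so $\BS^\2_{\beta,\l}$ is a Bott--Samelson variety of the rank-one subgroup $G_\beta$, whose own H\"arterich criterion would characterize the $T_\beta$-equivariant image over $H^\bullet_{T_\beta}(\pt)$, not the $T$-equivariant image over $S$.

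This is precisely the point the paper settles geometrically rather than combinatorially: it uses H\"arterich's $T$-equivariant embedding $v^{\alpha'}_\rho:(G_{\alpha'}/B_{\alpha'})^\l\hookrightarrow\BS(s')$, so that the relevant rank-two function is the honest pullback $g\circ v^{\alpha'}_\rho\circ\iota_2$, and its membership in $\X^\2_{\alpha',\l}$ is automatic from functoriality of restriction (Diagram~(\ref{eq:x})); no ``if'' direction of any rank-two criterion is ever needed. Your argument could be repaired either by adopting this pullback (which also fixes the parametrization of the $\sim_\beta$-class: the paper's correspondence is based at $\rho_{\min}$, not on matching wall-signs, and a change of parametrization preserving $\X^\2_{\beta,\l}$ would itself need justification), or by twisting by an element of $W$ taking $\beta$ to a simple root, identifying the resulting rank-two variety with $\BS((s_\beta,\ldots,s_\beta))$ for $G$ itself, and only then citing Proposition~\ref{proposition:x:1}. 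Some such argument must be supplied, since ``satisfies the congruences'' is not the definition of $\X^\2_{\beta,\l}$, and Lemma~\ref{lemma:y} requires genuine membership in that module.
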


Before we prove this theorem, we explain some auxiliary constructions. For $\alpha\in R$, a sequence of simple roots $s$
and $X\subset[1,|s|]$, we define the following relation on $\Gamma_s$:
$$
\gamma\sim_\alpha^X\delta\Leftrightarrow\big(\gamma_i\ne\delta_i\Rightarrow \bbeta_i(\gamma)=\pm\alpha\text{ and }i\in X\big).
$$
This relation, being the intersection
of two equivalence relations $\sim_\alpha$ and `coincide outside $X$',
is itself an equivalence relation.
Clearly, $\f_i\gamma\sim_\alpha^X\gamma$ for any $i\in X\cap M_\alpha(\gamma)$.

\begin{lemma}\label{proposition:1:new} The relation
$\gamma\sim_\alpha^X\delta$ holds if and only if $\delta=\f_{i_n}\cdots\f_{i_1}\gamma$ for some $i_1,\ldots,i_n\in X$ such that
$i_k\in M_\alpha(\f_{i_{k-1}}\cdots\f_{i_1}\gamma)$.
\end{lemma}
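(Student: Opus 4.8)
The statement is a biconditional, and the reverse implication is the quick one: if $\delta=\f_{i_n}\cdots\f_{i_1}\gamma$ with each $i_k\in X$ and $i_k\in M_\alpha(\f_{i_{k-1}}\cdots\f_{i_1}\gamma)$, then I would apply the observation $\f_i\eta\sim_\alpha^X\eta$ (valid for $i\in X\cap M_\alpha(\eta)$, noted just before the lemma) to each intermediate gallery $\eta=\f_{i_{k-1}}\cdots\f_{i_1}\gamma$ and chain these by the transitivity of $\sim_\alpha^X$ to obtain $\gamma\sim_\alpha^X\delta$. So essentially all the content is in the forward implication.

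For that direction the plan rests on one structural fact: the set $M_\alpha$ is left invariant by folding at any of its own positions, that is, $M_\alpha(\f_i\gamma)=M_\alpha(\gamma)$ whenever $i\in M_\alpha(\gamma)$. I would read this off~(\ref{eq:-2}): for $i\in M_\alpha(\gamma)$ we have $\bbeta_i(\gamma)=\pm\alpha$, hence $s_{\bbeta_i(\gamma)}=s_\alpha$, and since $s_\alpha$ interchanges $\alpha$ and $-\alpha$, the value $s_{\bbeta_i(\gamma)}^{\cond{i\le m}}\bbeta_m(\gamma)$ equals $\pm\alpha$ exactly when $\bbeta_m(\gamma)$ does, for every index $m$. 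Thus membership in $M_\alpha$ is preserved index by index.

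Now assume $\gamma\sim_\alpha^X\delta$ and let $D=\{j_1<\cdots<j_n\}$ be the set of indices where $\gamma$ and $\delta$ disagree. The definition of $\sim_\alpha^X$ gives $D\subset X\cap M_\alpha(\gamma)$. I would fold at these indices in increasing order, setting $\gamma^{(0)}=\gamma$ and $\gamma^{(k)}=\f_{j_k}\gamma^{(k-1)}$. Applying the invariance fact inductively yields $M_\alpha(\gamma^{(k)})=M_\alpha(\gamma)$ for all $k$, so each $j_k\in D\subset M_\alpha(\gamma^{(k-1)})$ and each $j_k\in X$; the required membership conditions therefore hold automatically. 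It remains to verify $\gamma^{(n)}=\delta$: since $\f_j$ alters only the $j$th entry (replacing $\gamma_j$ by $\gamma_j s_j$) and the $j_k$ are distinct, the entry at each $i\in D$ is flipped exactly once, turning $\gamma_i$ into the other element of $\{e,s_i\}$, which is precisely $\delta_i$, while the entries outside $D$ are untouched and already coincide with those of $\delta$.

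The main obstacle is really just the invariance identity $M_\alpha(\f_i\gamma)=M_\alpha(\gamma)$; everything else is bookkeeping. The delicate point it encapsulates is that folding at an index of $M_\alpha$ conjugates the later walls by $s_\alpha$ rather than by an arbitrary reflection, so it can only swap $\alpha\leftrightarrow-\alpha$ and can never push an index into or out of $M_\alpha$. Once this is established, the order of folding is irrelevant to the resulting gallery (distinct folds act on distinct entries), and increasing order is chosen only for definiteness.
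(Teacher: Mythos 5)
Your proof is correct, but it takes a genuinely different route from the paper's. The paper proves the forward implication by induction on the greatest index $i$ at which $\gamma$ and $\delta$ disagree: it folds at $i$ (noting $i\in X\cap M_\alpha(\gamma)$ directly from the definition of $\sim_\alpha^X$), observes that $\f_i\gamma$ agrees with $\delta$ at all positions $j\ge i$, and applies the inductive hypothesis to the pair $(\f_i\gamma,\delta)$ --- legitimate because $\f_i\gamma\sim_\alpha^X\gamma\sim_\alpha^X\delta$ and $\sim_\alpha^X$ is an equivalence relation. In that recursion the membership conditions $i_k\in M_\alpha(\f_{i_{k-1}}\cdots\f_{i_1}\f_i\gamma)$ come for free at each step from the definition of $\sim_\alpha^X$ applied to the current pair, so the paper never needs your invariance identity. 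You instead make the identity $M_\alpha(\f_i\gamma)=M_\alpha(\gamma)$ for $i\in M_\alpha(\gamma)$ the key lemma (correctly derived from~(\ref{eq:-2}): $s_{\bbeta_i(\gamma)}=s_\alpha$ permutes $\{\alpha,-\alpha\}$, so membership in $M_\alpha$ is preserved index by index) and then fold at all disagreement indices in a single pass; the invariance is precisely what lets you deduce the conditions $i_k\in M_\alpha(\f_{i_{k-1}}\cdots\f_{i_1}\gamma)$, which refer to the evolving gallery, from the inclusion $D\subset X\cap M_\alpha(\gamma)$, which refers to the original one. Your route costs one extra (easy) lemma but buys a non-recursive construction and the sharper observation that the folding order is immaterial; the paper's route is shorter because transitivity of the equivalence relation absorbs all of that bookkeeping.
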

\begin{proof} The above remark proves the `if'-part. Thus we only need to prove the `only if'-part.
Let $\gamma\sim_\alpha^X\delta$. We denote by $i$
the greatest number such that $\gamma_i\ne\delta_i$ or $-\infty$ if $\gamma=\delta$.
We apply induction on $i$, the case $i=-\infty$ being obvious.

Suppose now that $i\ge1$. By definition, we have $i\in X\cap M_\alpha(\gamma)$. Moreover,
$(\f_i\gamma)_j=\delta_j$ for $j\ge i$. By the inductive hypothesis, $\delta=\f_{i_n}\cdots\f_{i_1}\f_i\gamma$
for some $i_1,\ldots,i_n\in X$ such that $i_k\in M_\alpha(\f_{i_{k-1}}\cdots\f_{i_1}\f_i\gamma)$.
\end{proof}

\begin{lemma}\label{lemma:4}
Let $(p,w,\phi):s\to s'$ be a morphism of $\Fold$, $X\subset[1,|s|]$ and $\alpha\in R^+$.
For any $\gamma,\delta\in\Gamma_s$, $\gamma\sim_\alpha^X\delta$ is equivalent to
$\phi(\gamma)\sim_{w\alpha}^{p(X)}\phi(\delta)$.
\end{lemma}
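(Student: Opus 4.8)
The plan is to express both relations through the fold-sequence criterion of Lemma~\ref{proposition:1:new} and to transport such sequences across $\phi$ by means of property~\ref{F:3}. Recall that $(p,w,\phi)$ being a morphism means $\bbeta_{p(i)}(\phi(\mu))=\epsilon_i w\bbeta_i(\mu)$ for \emph{every} $\mu\in\Gamma_s$ (the sign $\epsilon_i=\pm1$ being independent of $\mu$ by Lemma~\ref{lemma:x2}), together with $\phi\f_i=\f_{p(i)}\phi$. Since $w$ is invertible and $\epsilon_i=\pm1$, this yields the pointwise equivalence $\bbeta_i(\mu)=\pm\alpha\iff\bbeta_{p(i)}(\phi(\mu))=\pm w\alpha$; in the notation of $M$, this reads $i\in M_\alpha(\mu)\iff p(i)\in M_{w\alpha}(\phi(\mu))$.

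Assume first that $\gamma\sim_\alpha^X\delta$. By Lemma~\ref{proposition:1:new} I may write $\delta=\f_{i_n}\cdots\f_{i_1}\gamma$ with $i_1,\dots,i_n\in X$ and $i_k\in M_\alpha(\mu_{k-1})$, where $\mu_k=\f_{i_k}\cdots\f_{i_1}\gamma$ and $\mu_0=\gamma$. Applying $\phi$ and using $\phi\f_i=\f_{p(i)}\phi$ repeatedly gives $\phi(\delta)=\f_{p(i_n)}\cdots\f_{p(i_1)}\phi(\gamma)$ with each $p(i_k)\in p(X)$. Moreover $\phi(\mu_{k-1})=\f_{p(i_{k-1})}\cdots\f_{p(i_1)}\phi(\gamma)$, and the pointwise equivalence above turns $i_k\in M_\alpha(\mu_{k-1})$ into $p(i_k)\in M_{w\alpha}(\phi(\mu_{k-1}))$. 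Thus the sequence $p(i_1),\dots,p(i_n)\in p(X)$ witnesses, again through Lemma~\ref{proposition:1:new}, the relation $\phi(\gamma)\sim_{w\alpha}^{p(X)}\phi(\delta)$.

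For the converse I would run the same chain backwards. If $\phi(\gamma)\sim_{w\alpha}^{p(X)}\phi(\delta)$, then Lemma~\ref{proposition:1:new} produces a fold sequence $j_1,\dots,j_n\in p(X)$ with $j_k$ in the corresponding $M_{w\alpha}$ set and $\phi(\delta)=\f_{j_n}\cdots\f_{j_1}\phi(\gamma)$. As $j_k\in p(X)$ and $p$ is injective, write $j_k=p(i_k)$ with $i_k\in X$; then $\f_{j_n}\cdots\f_{j_1}\phi(\gamma)=\phi(\f_{i_n}\cdots\f_{i_1}\gamma)$, and since $\phi$ is an embedding (Lemma~\ref{lemma:3}) I conclude $\delta=\f_{i_n}\cdots\f_{i_1}\gamma$. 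The same pointwise equivalence, read from right to left (apply $w^{-1}$ and cancel $\epsilon_{i_k}$), converts $p(i_k)\in M_{w\alpha}(\phi(\mu_{k-1}))$ back into $i_k\in M_\alpha(\mu_{k-1})$, so Lemma~\ref{proposition:1:new} yields $\gamma\sim_\alpha^X\delta$.

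The only point requiring care, and the one I would flag as the main obstacle, is that the membership conditions must hold at \emph{every} intermediate gallery $\mu_{k-1}$, not merely at the endpoints. This is exactly where I rely on the morphism relation $\bbeta_{p(i)}(\phi(\mu))=\epsilon_i w\bbeta_i(\mu)$ holding for all $\mu$ simultaneously, which is guaranteed because $(\mu,\phi(\mu))$ is a $(p,w)$-pair for every $\mu$ and the sign is gallery-independent by Lemma~\ref{lemma:x2}; coupled with $\phi\f_i=\f_{p(i)}\phi$ this makes the two fold sequences, indexed by $i_k$ and $p(i_k)$, step-by-step compatible. No positivity of $w\alpha$ is needed, since $\sim_\beta^{Y}$ and $M_\beta$ depend only on $\pm\beta$.
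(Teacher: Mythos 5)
Your proof is correct, and its forward half coincides with the paper's: express $\delta=\f_{i_n}\cdots\f_{i_1}\gamma$ via Lemma~\ref{proposition:1:new}, push the folds through $\phi$ using property~\ref{F:3}, and verify the membership conditions at each intermediate gallery via the $(p,w)$-pair relation with gallery-independent sign (Lemma~\ref{lemma:x2}). Where you genuinely diverge is the converse. The paper runs a fresh induction on the maximal index $i$ at which $\phi(\gamma)$ and $\phi(\delta)$ differ: it sets $j=p^{-1}(i)$, checks $\bbeta_j(\gamma)=\pm\alpha$, folds at $j$, and invokes Lemma~\ref{lemma:3} only in the base case $\phi(\gamma)=\phi(\delta)$ to get $\gamma=\delta$ --- in effect re-proving inside Lemma~\ref{lemma:4} the induction that already underlies Lemma~\ref{proposition:1:new}. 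You instead apply Lemma~\ref{proposition:1:new} directly in $\Gamma_{s'}$ to obtain a fold sequence $j_1,\dots,j_n\in p(X)$, pull it back through the injective map $p$, rewrite $\f_{j_n}\cdots\f_{j_1}\phi(\gamma)=\phi(\f_{i_n}\cdots\f_{i_1}\gamma)$ by repeated use of~\ref{F:3}, and conclude $\delta=\f_{i_n}\cdots\f_{i_1}\gamma$ from injectivity of $\phi$ (Lemma~\ref{lemma:3}); the membership conditions then transfer by the pointwise equivalence $\bbeta_i(\mu)=\pm\alpha\Leftrightarrow\bbeta_{p(i)}(\phi(\mu))=\pm w\alpha$. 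Both arguments use exactly the same ingredients (Lemmas~\ref{proposition:1:new} and~\ref{lemma:3}, property~\ref{F:3}, the pair relation), but your organization is more symmetric and economical, treating the characterization lemma as a black box in both directions; the paper's inductive version buys nothing extra in strength, only self-containment of the downward step. The two points you flag --- that every intermediate gallery $\f_{j_{k-1}}\cdots\f_{j_1}\phi(\gamma)$ is again of the form $\phi(\mu_{k-1})$, so the morphism relation applies there, and that positivity of $w\alpha$ is irrelevant since $\sim_\beta^Y$ and $M_\beta$ depend only on $\pm\beta$ --- are precisely what make the pullback legitimate, and the paper relies on them tacitly as well.
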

\begin{proof} Let $\gamma\sim_\alpha^X\delta$. By Proposition~\ref{proposition:1:new}, we get
$\delta=\f_{i_n}\cdots\f_{i_1}\gamma$ for some $i_1,\ldots,i_n\in X$ such that
$i_k\in M_\alpha(\f_{i_{k-1}}\cdots\f_{i_1}\gamma)$.
Applying $\phi$, we get
$$
\phi(\delta)=\f_{p(i_n)}\cdots\f_{p(i_1)}\phi(\gamma).
$$
Since
$$
\bbeta_{p(i_k)}(\f_{p(i_{k-1})}\cdots\f_{p(i_1)}\phi(\gamma))=
\bbeta_{p(i_k)}(\phi(\f_{i_{k-1}}\cdots\f_{i_1}\gamma))
=\pm w\bbeta_{i_k}(\f_{i_{k-1}}\cdots\f_{i_1}\gamma)=\pm w\alpha,
$$
we get $\phi(\delta)\sim_{w\alpha}^{p(X)}\phi(\gamma)$ by Proposition~\ref{proposition:1:new}.

Conversely, let $\phi(\gamma)\sim_{w\alpha}^{p(X)}\phi(\delta)$. We denote by $i$ the maximal number such that
$\phi(\gamma)_i\ne\phi(\delta)_i$ or $-\infty$ if $\phi(\gamma)=\phi(\delta)$. We apply induction on $i$.
If $i=-\infty$, then $\phi(\gamma)=\phi(\delta)$, whence $\gamma=\delta$ by Lemma~\ref{lemma:3}.

Now suppose that $i\ge1$. As $i\in p(X)$, we have $i=p(j)$ for some $j\in X$.
By definition, we get
$$
\pm w\alpha=\bbeta_i(\phi(\gamma))=\bbeta_{p(j)}(\phi(\gamma))=\pm w\bbeta_j(\gamma),
$$
whence $\bbeta_j(\gamma)=\pm\alpha$. Moreover,
$$
\phi(\f_j\gamma)=\f_i\phi(\gamma)\sim_{w\alpha}^{p(X)}\phi(\gamma)\sim_{w\alpha}^{p(X)}\phi(\delta)
$$
and $(\f_i\phi(\gamma))_k=\phi(\delta)_k$ for any $k\ge i$. By the inductive hypothesis, we get $\f_j\gamma\sim_\alpha^X\delta$.
It remains to notice that $\gamma\sim_\alpha^X\f_j\gamma$.
\end{proof}

\begin{lemma}\label{lemma:5}
Let $(p,w,\phi):s\to s'$ be a morphism of $\Fold$ and $\gamma\in\Gamma_s$.
For any $\tau\in\Gamma_{s'}$, the following conditions are equivalent:
\begin{enumerate}
\item\label{lemma:5:p:1} $\tau\in\im\phi$, $\phi^{-1}(\tau)\sim_\alpha\gamma$;
\item\label{lemma:5:p:2} $\tau\sim_{w\alpha}^{\im p}\phi(\gamma)$.
\end{enumerate}
\end{lemma}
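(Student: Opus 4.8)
The plan is to reduce both directions of the equivalence to Lemma~\ref{lemma:4} (applied with the full index set $X=[1,|s|]$) together with the image criterion of Lemma~\ref{lemma:3}. The guiding observation is that the relation $\sim_\alpha$ is nothing but $\sim_\alpha^{[1,|s|]}$: in the definition of $\sim_\alpha^X$ the side condition $i\in X$ is vacuous once $X=[1,|s|]$, so the two relations on $\Gamma_s$ coincide, and moreover $p([1,|s|])=\im p$. Hence Lemma~\ref{lemma:4} with $X=[1,|s|]$ reads: for $\delta\in\Gamma_s$ one has $\gamma\sim_\alpha\delta$ if and only if $\phi(\gamma)\sim_{w\alpha}^{\im p}\phi(\delta)$. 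I will use throughout that $\sim_\alpha$ and $\sim_{w\alpha}^{\im p}$ are equivalence relations, in particular that I may fix the argument order in Lemma~\ref{lemma:4} as convenient.

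For the implication $\ref{lemma:5:p:1}\Rightarrow\ref{lemma:5:p:2}$ I would set $\delta=\phi^{-1}(\tau)$, which is legitimate since $\tau\in\im\phi$. The hypothesis $\delta\sim_\alpha\gamma$ plugged into Lemma~\ref{lemma:4} (with $X=[1,|s|]$, arguments $\delta,\gamma$) yields $\phi(\delta)\sim_{w\alpha}^{\im p}\phi(\gamma)$; since $\phi(\delta)=\tau$, this is exactly $\tau\sim_{w\alpha}^{\im p}\phi(\gamma)$, i.e.~condition~\ref{lemma:5:p:2}.

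For the converse $\ref{lemma:5:p:2}\Rightarrow\ref{lemma:5:p:1}$ the point is that membership of $\tau$ in $\im\phi$ is not assumed but must be extracted from~\ref{lemma:5:p:2}. Unwinding the definition of $\tau\sim_{w\alpha}^{\im p}\phi(\gamma)$, for every index $j$ one has $\tau_j\ne\phi(\gamma)_j\Rightarrow j\in\im p$, i.e.~$\tau_j=\phi(\gamma)_j$ for all $j\in[1,|s'|]\setminus\im p$. By the image criterion of Lemma~\ref{lemma:3} this is precisely the assertion that $\tau\in\im\phi$. Once this is in hand I would again put $\delta=\phi^{-1}(\tau)$, rewrite~\ref{lemma:5:p:2} as $\phi(\delta)\sim_{w\alpha}^{\im p}\phi(\gamma)$, and invoke Lemma~\ref{lemma:4} (with $X=[1,|s|]$) in the reverse direction to conclude $\delta\sim_\alpha\gamma$; thus $\phi^{-1}(\tau)\sim_\alpha\gamma$, and together with $\tau\in\im\phi$ this is~\ref{lemma:5:p:1}.

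The only genuinely substantive step is extracting $\tau\in\im\phi$ in the converse direction, which is where Lemma~\ref{lemma:3} does the real work; everything else is a direct translation through Lemma~\ref{lemma:4}. I expect no serious obstacle beyond being careful that the side condition in $\sim_\alpha^X$ collapses correctly for $X=[1,|s|]$ and that the symmetry of the two equivalence relations is invoked legitimately.
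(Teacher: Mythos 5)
Your proof is correct and follows essentially the same route as the paper's (much terser) proof: the forward direction is Lemma~\ref{lemma:4} with $X=[1,|s|]$, and the converse first extracts $\tau\in\im\phi$ from the agreement of $\tau$ and $\phi(\gamma)$ outside $\im p$ via Lemma~\ref{lemma:3}, then applies Lemma~\ref{lemma:4} in reverse. Your write-up also correctly identifies $\sim_\alpha$ with $\sim_\alpha^{[1,|s|]}$ and $p([1,|s|])$ with $\im p$, which the paper uses implicitly.
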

\begin{proof} By Lemma~\ref{lemma:4}, we get that~\ref{lemma:5:p:1} implies~\ref{lemma:5:p:2}.
Now suppose that~\ref{lemma:5:p:2} holds. In view of Lemma~\ref{lemma:4}, it suffices to note
that $\tau\in\im p$ by Lemma~\ref{lemma:3}.
\end{proof}

For $\alpha\in R$, $\gamma\in\Gamma_s$, $X\subset[1,|s|]$ and $\epsilon:X\to\{-1,1\}$, we set
$$
J_\alpha^{X,\epsilon}(\gamma)=\{i\in X\suchthat\bbeta_i(\gamma)=\epsilon_i\alpha\}.
$$
Thus $J_\alpha(\gamma)=J_\alpha^{[1,|s|],1}(\gamma)$. 

\begin{lemma}\label{lemma:6}
Let $(p,w,\phi):s\to s'$ be a morphism of $\Fold$ having sign $\epsilon$.
Then $p(J_\alpha(\gamma))=J_{w\alpha}^{\im p,\epsilon\circ p^{-1}}(\phi(\gamma))$
for any $\alpha\in R$.
\end{lemma}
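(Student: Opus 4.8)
The plan is to prove the set equality by unfolding both sides and comparing them index by index, the only real content being the defining sign relation of the morphism. First I would recall the notation: by the definition of the sign $\epsilon=(\epsilon_1,\ldots,\epsilon_r)$ together with condition~\ref{F:3} of the definition of $\Fold$, we have
$$
\bbeta_{p(i)}(\phi(\gamma))=\epsilon_i\,w\,\bbeta_i(\gamma)\qquad\text{for all }i=1,\ldots,r,
$$
and by Lemma~\ref{lemma:x2} the signs $\epsilon_i$ do not depend on the chosen gallery $\gamma$, so this relation is available for our fixed but arbitrary $\gamma$. Since $p$ is a monotone embedding, it is injective, so every $j\in\im p$ is uniquely of the form $j=p(i)$, and the function $\epsilon\circ p^{-1}$ on $\im p$ is well defined by $(\epsilon\circ p^{-1})_{p(i)}=\epsilon_i$.

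Next I would compute the right-hand set. By definition, an element $j\in\im p$ lies in $J_{w\alpha}^{\im p,\epsilon\circ p^{-1}}(\phi(\gamma))$ if and only if $\bbeta_j(\phi(\gamma))=(\epsilon\circ p^{-1})_j\,w\alpha$. Writing $j=p(i)$ and substituting $(\epsilon\circ p^{-1})_{p(i)}=\epsilon_i$, this condition reads $\bbeta_{p(i)}(\phi(\gamma))=\epsilon_i\,w\alpha$. Using the sign relation above to replace the left-hand side, this becomes $\epsilon_i\,w\,\bbeta_i(\gamma)=\epsilon_i\,w\alpha$.

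The final step is the cancellation: since $\epsilon_i\in\{-1,1\}$ and $w\in W$ acts invertibly on $E$, the factor $\epsilon_i w$ may be cancelled, and the condition is equivalent to $\bbeta_i(\gamma)=\alpha$, that is, to $i\in J_\alpha(\gamma)$. Thus $p(i)\in J_{w\alpha}^{\im p,\epsilon\circ p^{-1}}(\phi(\gamma))$ if and only if $i\in J_\alpha(\gamma)$, and since every element of $\im p$ is of the form $p(i)$ for a unique $i$, this gives precisely $J_{w\alpha}^{\im p,\epsilon\circ p^{-1}}(\phi(\gamma))=p(J_\alpha(\gamma))$, as claimed.

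I do not expect a genuine obstacle here; the argument is a direct unwinding of definitions. The only points requiring care are bookkeeping ones: ensuring that $\epsilon\circ p^{-1}$ is legitimately defined on $\im p$ (which needs injectivity of $p$), invoking Lemma~\ref{lemma:x2} so that the sign is the same for $\phi(\gamma)$ as in the definition, and noting that the cancellation of $\epsilon_i w$ is valid because it is an invertible transformation. These are exactly the reasons the statement is phrased in terms of $J_{w\alpha}^{\im p,\epsilon\circ p^{-1}}$ rather than the unsigned $J_{w\alpha}$.
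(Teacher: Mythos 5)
Your proof is correct and is essentially the same argument as the paper's: the paper writes the identical chain of equivalences (unwind $p(J_\alpha(\gamma))$, insert the sign relation $\bbeta_{p(i)}(\phi(\gamma))=\epsilon_i w\bbeta_i(\gamma)$, cancel the invertible factor $\epsilon_i w$), just traversed from the left-hand side to the right-hand side rather than in reverse as you do. No gap; nothing further to add.
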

\begin{proof}
The result follows from the following chain of equivalences:
\begin{multline*}
i\in p(J_\alpha(\gamma))\Leftrightarrow i\in\im p\text{ \& }p^{-1}(i)\in J_\alpha(\gamma)\Leftrightarrow
i\in\im p\text{ \& }\bbeta_{p^{-1}(i)}(\gamma)=\alpha\\
\Leftrightarrow i\in\im p\text{ \& }\epsilon_{p^{-1}(i)}w\bbeta_{p^{-1}(i)}(\gamma)=\epsilon_{p^{-1}(i)}w\alpha
\Leftrightarrow i\in\im p\text{ \& }\bbeta_i(\phi(\gamma))=\epsilon_{p^{-1}(i)}w\alpha\\
\Leftrightarrow i\in J_{w\alpha}^{\im p,\epsilon\circ p^{-1}}(\phi(\gamma)).
\end{multline*}
\end{proof}

\begin{proof}[Proof of Theorem~\ref{theorem:1:rocaam}] First we consider the case $k=\Z'$ in order to
apply Proposition~\ref{proposition:x:1}. Thus it suffices to prove that
$$
w^{-1}\sum_{\delta\in\Gamma_s,\delta\sim_\alpha\gamma,J_\alpha(\delta)\subset J_\alpha(\gamma)}(-1)^{|J_\alpha(\delta)|}g\circ\phi(\delta)\=0\pmod{\alpha^{|J_\alpha(\gamma)|}}
$$
for any $\alpha\in R^+$ and $\gamma\in\Gamma_s$.
By Lemmas~\ref{lemma:4},~\ref{lemma:5}, and~\ref{lemma:6} this equivalence
can be rewritten as follows:
\begin{equation}\label{eq:10}
\sum_{\ov{\tau\in\Gamma_{s'},\tau\sim_{w\alpha}^{\im p}\rho}{J_{w\alpha}^{\im p,\epsilon\circ p^{-1}}(\tau)\subset J_{w\alpha}^{\im p,\epsilon\circ p^{-1}}(\rho)}}(-1)^{|J_{w\alpha}^{\im p,\epsilon\circ p^{-1}}(\tau)|}g(\tau)\=0\pmod{(w\alpha)^{|J_{w\alpha}^{\im p,\epsilon\circ p^{-1}}(\rho)|}},
\end{equation}
where $\rho=\phi(\gamma)$ (and $\tau=\phi(\delta)$). 

To prove this formula, we recall some constructions from~\cite{Haerterich}.
Let $r'=|s'|$ and $\alpha'$ denote the positive of two roots
$w\alpha$ and $-w\alpha$. We consider the set $M_{\alpha'}(\rho)=\{m_1<\cdots<m_\l\}$.
H\"arterich~\cite[Section 4]{Haerterich} constructs the embedding
$v_\rho^{\alpha'}:(G_{\alpha'}/B_{\alpha'})^\l\hookrightarrow\SSigma_{s'}$ by requiring that its composition 
with the map $\iota:\SSigma_{s'}\hookrightarrow(G/B)^{r'}$ defined by~(\ref{eq:a})
be equal to\footnote{Here and in the rest of the proof, we write $g$ instead of $gB$, when speaking about elements of $G/B$.}
$$
\begin{tikzcd}
(g_1,\ldots,g_\l)\arrow[mapsto]{r}{\iota\circ v^{\alpha'}_\rho}&(\rho_{\min}^1,\ldots \rho_{\min}^{m_1-1},g_1\rho_{\min}^{m_1},\ldots g_1\rho_{\min}^{m_2-1},
\ldots,g_\l\rho_{\min}^{m_\l},\ldots g_\l\rho_{\min}^{r'}),
\end{tikzcd}
$$
where $\rho_{\min}$ is the minimal element in the $\sim_\alpha$-equivalence class of $\rho$
with respect to the total order $\vartriangleleft$.
Note that $(\im v^{\alpha'}_\rho)^T=\{\tau\in\Gamma_{s'}\suchthat\tau\sim_{\alpha'}\rho\}$.
The commutative diagram (see the notation of Section~\ref{rank2})
$$
\begin{tikzcd}
\BS_{\alpha',\l}^\2\arrow{r}{\iota_2}[swap]{\sim}&(G_{\alpha'}/B_{\alpha'})^\l\arrow[hook]{r}{v^{\alpha'}_\rho}&\SSigma_{s'}\\
\Gamma^\2_{\alpha',\l}\arrow{r}{\iota_2}[swap]{\sim}\arrow[hook]{u}&\{e,s_{\alpha'}\}^\l\arrow[hook]{r}{v^{\alpha'}_\rho}\arrow[hook]{u}&\Gamma_{s'}\arrow[hook]{u}
\end{tikzcd}
$$
yields the following commutative diagram of cohomologies:
\begin{equation}\label{eq:x}
\begin{tikzcd}
H_T^\bullet(\BS_{\alpha',\l}^\2,\Z')\arrow{d}&\arrow{l}[swap]{\iota_2^*}{\sim}H_T^\bullet((G_{\alpha'}/B_{\alpha'})^\l,\Z')\arrow{d}&\arrow{l}[swap]{(v^{\alpha'}_\rho)^*}H_T^\bullet(\SSigma_{s'},\Z')\arrow{d}\\
H_T^\bullet(\Gamma^\2_{\alpha',\l},\Z')&\arrow{l}[swap]{\iota_2^*}{\sim}H_T^\bullet(\{e,s_{\alpha'}\}^\l,\Z')&H_T^\bullet(\Gamma_{s'},\Z')\arrow{l}[swap]{(v^{\alpha'}_\rho)^*}
\end{tikzcd}
\end{equation}
Hence we get that $g\circ v^{\alpha'}_\rho\circ\iota_2$ belongs to the leftmost
vertical arrow.
We want to use the substitutions $v_\rho^{\alpha'}\circ\iota_2(\lm)=\rho$ and
$\tau=v_\rho^{\alpha'}\circ\iota_2(\mu)$, where $\lm,\mu\in\Gamma_{\alpha',\l}^\2$ in~(\ref{eq:10}).
Our next aim is to write the summation in this formula in the variables $\lm$ and $\mu$.

Let $s'_i=s_{\alpha'_i}$, where $\alpha'_i$ is a simple root, and $\epsilon$ be the sign of
$(p,w,\phi)$. We set
$$
\begin{array}{l}
A=\{j=1,\ldots,\l\suchthat m_j\in\im p\},\\[10pt]
\xi:A\to\{-1,1\},\quad \xi_j=\dfrac{\epsilon_{p^{-1}(m_j)}w\alpha}{\rho_{\min}^{m_j}(\alpha'_{m_j})},\\[16pt]
B(\nu)=\{j\in A\;\suchthat\;\nu^j(-\alpha')=\xi_j\alpha'\}=\{j\in A\;\suchthat\;\nu^j=s_{\alpha'}^{(\xi_j+1)/2}\}
\end{array}
$$
for $\nu\in\Gamma_{\alpha',\l}^\2$ in the last formula.

First, we calculate $\iota\circ v_\rho^{\alpha'}\circ\iota_2(\lm)$ as follows:
\begin{equation}\label{eq:-xx}
\!\!\!
\begin{array}{l}
\begin{tikzcd}
(\lm_1,\ldots,\lm_\l)\arrow[mapsto]{r}{\iota_2}&(\lm^1,\ldots,\lm^\l)&
\end{tikzcd}
\\
\begin{tikzcd}
\arrow[mapsto]{r}{\iota\circ v_\rho^{\alpha'}}&(\rho_{\min}^1,\ldots,\rho_{\min}^{m_1-1},\lm^1\rho_{\min}^{m_1},\ldots\lm^1\rho_{\min}^{m_2-1},\ldots,\lm^\l\rho_{\min}^{m_\l},\ldots,\lm^\l\rho_{\min}^{r'})
=(\rho^1,\rho^2,\ldots,\rho^{r'}).
\end{tikzcd}\!\!\!
\end{array}\!\!\!\!\!\!\!\!
\end{equation}
This formula and the similar formula for $\iota\circ v_\rho^{\alpha'}\circ\iota_2(\mu)$ imply
$$
\rho_i=
\left\{
\begin{array}{ll}
(\rho_{\min})_i&\text{ if }i\notin\{m_1,\ldots,m_\l\};\\
(\rho_{\min})_i&\text{ if }i=m_j\text{ and }\lm_j=e;\\
s_{\alpha'_i}(\rho_{\min})_i&\text{ if }i=m_j\text{ and }\lm_j=s_{\alpha'},
\end{array}
\right.\quad
\tau_i=
\left\{
\begin{array}{ll}
(\rho_{\min})_i&\text{ if }i\notin\{m_1,\ldots,m_\l\};\\
(\rho_{\min})_i&\text{ if }i=m_j\text{ and }\mu_j=e;\\
s_{\alpha'_i}(\rho_{\min})_i&\text{ if }i=m_j\text{ and }\mu_j=s_{\alpha'}.
\end{array}
\right.
$$
Hence we get that the restriction $\tau\sim_{w\alpha}^{\im p}\rho$ from the summation of~(\ref{eq:10})
is equivalent to the following one:
$$
\lm_j\ne\mu_j\Longrightarrow j\in A.
$$

Now let us calculate $J_{w\alpha}^{\im p,\epsilon\circ p^{-1}}(\rho)$.
%
For any $j\in A$, we have $\rho^{m_j}=\lm^j\rho_{\min}^{m_j}$ by~(\ref{eq:-xx}), whence
\begin{multline*}
\bbeta_{m_j}(\rho)=\epsilon_{p^{-1}(m_j)}w\alpha
\Leftrightarrow
\lm^j\rho_{\min}^{m_j}(-\alpha'_{m_j})=\epsilon_{p^{-1}(m_j)}w\alpha\Leftrightarrow\\
\Leftrightarrow\lm^j\left(-\alpha'\frac{\rho_{\min}^{m_j}(\alpha'_{m_j})}{\alpha'}\right)=\epsilon_{p^{-1}(m_j)}w\alpha
\Leftrightarrow\lm^j\left(-\alpha'\right)=\frac{\epsilon_{p^{-1}(m_j)}w\alpha}{\rho_{\min}^{m_j}(\alpha'_{m_j})}\alpha'
=\xi_j\alpha'.
\end{multline*}
In this calculation, we used ${\rho_{\min}^{m_j}(\alpha'_{m_j})}/{\alpha'}=\pm1$.
From the above chain of equivalences, we get
\begin{multline*}
J_{w\alpha}^{\im p,\epsilon\circ p^{-1}}(\rho)=
\{i\in \im p\suchthat\bbeta_i(\rho)=\epsilon_{p^{-1}(i)}w\alpha\}\\[6pt]
=\{i\in \im p\cap M_{\alpha'}(\rho)\suchthat\bbeta_i(\rho)=\epsilon_{p^{-1}(i)}w\alpha\}
=m_{\{j\in A\suchthat\bbeta_{m_j}(\rho)=\epsilon_{p^{-1}(m_j)}w\alpha \}}
=m_{B(\lm)}.
\end{multline*}
Similarly, $J_{w\alpha}^{\im p,\epsilon\circ p^{-1}}(\tau)=m_{B(\mu)}$.
Therefore the inclusion $J_{w\alpha}^{\im p,\epsilon\circ p^{-1}}(\tau)\subset J_{w\alpha}^{\im p,\epsilon\circ p^{-1}}(\rho)$
is equivalent to $B(\mu)\subset B(\lm)$.
Thus we can rewrite~(\ref{eq:10}) as follows:
\begin{equation}\label{eq:xx}
\sum_{\ov{\ov{\mu\in\Gamma_{\alpha',\l}^\2}{\mu_j\ne\lm_j\Rightarrow j\in A}}{ B(\mu)\subset B(\lm)}}\frac{f(\mu)}{(-1)^{|B(\mu)|}\alpha'^{\,|B(\lm)|}}\in S_{\Z'}.
\end{equation}
for any $f$ belonging to the image of the restriction $H^\bullet_T(\BS_{\alpha',\l}^\2,\Z')\to H^\bullet_T(\Gamma_{\alpha',\l}^\2,\Z')$.
Actually $f=g\circ v^{\alpha'}_\rho\circ\iota_2$, see the explanation immediately after Diagram~(\ref{eq:x}).

This fact follows from Lemma~\ref{lemma:y} for $\alpha=\alpha'$ if we set
$$
L=\{1,\ldots,\l\}\setminus A,\quad  K=\{j\in A\suchthat \lm^j=s_{\alpha'}^{(1-\xi_j)/2}\}
$$
and define the functions $l:L\to\{e,s_{\alpha'}\}$ and $k:K\to\{e,s_{\alpha'}\}$ by: $l_j=\lm_j$ and $k_j=\lm^j$.

We note that the inclusion $B(\mu)\subset B(\lm)$ is equivalent to
$$
\mu^j=s_{\alpha'}^{(\xi_j+1)/2}\Rightarrow \lm^j=s_{\alpha'}^{(\xi_j+1)/2},
$$
which in its turn is equivalent to
$$
\lm^j=s_{\alpha'}^{(1-\xi_j)/2}\Rightarrow \mu^j=\lm^j. 
$$
The last condition can be written as $\forall j\in K: \mu^j=k_j$.

It remains to calculate the denominator in the sum of Lemma~\ref{lemma:y} and compare
it with the denominator of~(\ref{eq:xx}). We get
\begin{multline*}
\prod_{j\in[1,\l]\setminus(K\cup L)}\mu^j(-\alpha')=\prod_{j\in B(\lm)}\mu^j(-\alpha)
=\prod_{j\in B(\lm)\setminus B(\mu)}\mu^j(-\alpha')\cdot \prod_{j\in B(\mu)}\mu^j(-\alpha')\\
=\prod_{j\in B(\lm)\setminus B(\mu)}s_{\alpha'}^{(1-\xi_j)/2}(-\alpha')\cdot\prod_{j\in B(\mu)}s_{\alpha'}^{(\xi_j+1)/2}(-\alpha')\\
=(-1)^{|B(\mu)|}\prod_{j\in B(\lm)}s_{\alpha'}^{(1-\xi_j)/2}(-\alpha')
=\omega(-1)^{|B(\mu)|}{\alpha'}^{\,|B(\lm)|},
\end{multline*}
where $\omega=\pm1$ does not depend on $\mu$. This proves~(\ref{eq:xx}).

We can now come back to the general case. We apply Diagram~(\ref{eq:-3}).
Any element $g\in\X(s')$ has the following form $g=\zeta\Big(\sum_{i=1}^nf_i\otimes\lm_i\Big)$,
where $f_i\in\X(s',\Z')$ and $\lm_i\in S$. As the homomorphism $\zeta':S_{\Z'}\to S$ is $W$-invariant,
we get
$$
g_{(p,w,\phi)}=\zeta\(\sum_{i=1}^n(f_i)_{(p,w,\phi)}\otimes w^{-1}\lm_i\).
$$
Indeed, for any $\gamma\in\Gamma_s$, we have
\begin{multline*}
\displaystyle
g_{(p,w,\phi)}(\gamma)=w^{-1}g(\phi(\gamma))=w^{-1}\zeta\Big(\sum_{i=1}^nf_i\otimes\lm_i\Big)(\phi(\gamma))
=\sum_{i=1}^nw^{-1}\zeta'\big(f_i(\phi(\gamma))\big)\cdot w^{-1}\lm_i\\
\displaystyle
\!=\sum_{i=1}^n\zeta'\big(w^{-1}f_i(\phi(\gamma))\big)\cdot w^{-1}\lm_i
=\!\sum_{i=1}^n\zeta'((f_i)_{(p,w,\phi)}(\gamma))\cdot w^{-1}\lm_i
=\zeta\(\sum_{i=1}^n(f_i)_{(p,w,\phi)}\otimes w^{-1}\lm_i\)\!(\gamma).\!\!\!\!\!
\end{multline*}
It follows from the first part of the proof that $(f_i)_{(p,w,\phi)}\in\X(s,\Z')$. Hence $g\in\X(s)$.
\end{proof}

This theorem has the following corollary. Consider the field $Q$ of quotients of $S$ and the natural
product $\Map(\Gamma_s,Q)\times\Map(\Gamma_s,Q)\to Q$ defined by
$(f,g)=\sum_{\gamma\in\Gamma_s}f(\gamma)g(\gamma)$. We define the dual $S$-module
$$
D\X(s)=\{f\in\Map(\Gamma_s,Q)\suchthat(f,g)\in S\;\forall g\in\X(s)\}.
$$

\begin{corollary}
Let $(p,w,\phi):s\to s'$ be a morphism of $\Fold$ and $f\in D\X(s)$.
We define the map from $\Gamma_{s'}$ to $Q$ by
$$
f^{(p,w,\phi)}(\delta)=
\left\{
\begin{array}{ll}
wf(\gamma)&\text{ if }\delta=\phi(\gamma);\\
0&\text{ if }\delta\notin\im\phi.
\end{array}
\right.
$$
Then $f^{(p,w,\phi)}\in D\X(s')$.
\end{corollary}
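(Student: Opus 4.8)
The plan is to reduce the statement to Theorem~\ref{theorem:1:rocaam} by rewriting the pairing $(f^{(p,w,\phi)},g')$, for an arbitrary $g'\in\X(s')$, as a pairing over $\Gamma_s$ to which that theorem applies. First I would recall that $\phi$ is an embedding (Lemma~\ref{lemma:3}), so that each $\delta\in\im\phi$ equals $\phi(\gamma)$ for a unique $\gamma\in\Gamma_s$. Since $f^{(p,w,\phi)}$ vanishes off $\im\phi$, the sum defining the pairing collapses to
$$
(f^{(p,w,\phi)},g')=\sum_{\gamma\in\Gamma_s}wf(\gamma)\cdot g'(\phi(\gamma)).
$$

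Next I would exploit that each $w\in W$ acts on $S$, and hence on its field of fractions $Q$, as a ring automorphism. Applying $w^{-1}$ to the displayed identity and using that $w^{-1}$ is multiplicative gives
$$
w^{-1}\big((f^{(p,w,\phi)},g')\big)=\sum_{\gamma\in\Gamma_s}f(\gamma)\cdot w^{-1}\big(g'(\phi(\gamma))\big)=\sum_{\gamma\in\Gamma_s}f(\gamma)\,g'_{(p,w,\phi)}(\gamma)=(f,g'_{(p,w,\phi)}),
$$
where $g'_{(p,w,\phi)}$ is exactly the function introduced in Theorem~\ref{theorem:1:rocaam}.

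By Theorem~\ref{theorem:1:rocaam}, $g'_{(p,w,\phi)}\in\X(s)$; since $f\in D\X(s)$, the right-hand side $(f,g'_{(p,w,\phi)})$ lies in $S$. Hence $w^{-1}\big((f^{(p,w,\phi)},g')\big)\in S$, and applying the automorphism $w$, which stabilises $S$, yields $(f^{(p,w,\phi)},g')\in S$. As $g'\in\X(s')$ was arbitrary, this proves $f^{(p,w,\phi)}\in D\X(s')$. The only point requiring care is the interplay between the $W$-action and the pairing; once it is observed that $W$ acts on $Q$ by ring automorphisms preserving $S$, the computation is a direct substitution and there is no genuine obstacle beyond this bookkeeping.
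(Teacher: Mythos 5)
Your proposal is correct and follows essentially the same route as the paper: collapse the pairing over $\Gamma_{s'}$ to a sum over $\Gamma_s$ using the vanishing of $f^{(p,w,\phi)}$ off $\im\phi$ and the injectivity of $\phi$, then identify the result (up to the $W$-action, which acts by ring automorphisms preserving $S$) with $(f,g'_{(p,w,\phi)})$ and invoke Theorem~\ref{theorem:1:rocaam}. The paper's proof is the same computation written in a single chain, $(f^{(p,w,\phi)},g)=\sum_{\gamma}wf(\gamma)\cdot g(\phi(\gamma))=w(f,g_{(p,w,\phi)})\in S$; your version merely makes explicit the bookkeeping with $w^{-1}$ and the citation of Lemma~\ref{lemma:3}.
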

\begin{proof} For any $g\in\X(s')$, we have
$$
(f^{(p,w,\phi)},g)=\sum_{\gamma\in\Gamma_s}f^{(p,w,\phi)}(\phi(\gamma))\cdot g(\phi(\gamma))
=\sum_{\gamma\in\Gamma_s}wf(\gamma)\cdot g(\phi(\gamma))=w(f,g_{(p,w,\phi)})\in S.
$$
\end{proof}


\subsection{\texorpdfstring{Functor $\widetilde H$}{Functor tilde H}}\label{widetildeH}

First we describe the embedding $\Seq\hookrightarrow\Fold$.
It acts identically on objects and identifies a morphism $p:s\to s'$ with the morphism $(p,e,\phi^p)$,
where
$$
\phi^p(\gamma)_j=
\left\{
\!\!
\begin{array}{ll}
\gamma_i&\text{ if }j=p(i);\\[3pt]
e&\text{ if }j\notin\im p.
\end{array}
\right.
$$
Note that this morphism is of identical rotation and positive sign.

Now we construct the functor $\tH$. 
From the commutativity of Diagram~(\ref{eq:cdf}), we obtain $\tH(s)=H^\bullet_T(\SSigma_s)$.
It remains to define $\tH$ on morphisms.

Let $(p,w,\phi):s\to s'$ be a morphism of $\Fold$. We define $\tH((p,w,\phi))$ uniquely by claiming
that the following diagram be commutative:
\begin{equation}\label{eq:zz}
\begin{tikzcd}[column sep=6em]
H_T^\bullet(\SSigma_{s'})\arrow{r}{\tH((p,w,\phi))}\arrow[hook]{d}&H_T^\bullet(\SSigma_s)\arrow[hook]{d}\\
H_T^\bullet(\Gamma_{s'})\arrow{r}{?_{(p,w,\phi)}}&H_T^\bullet(\Gamma_s)
\end{tikzcd}
\end{equation}
Here the morphism $?_{(p,w,\phi)}$ of the bottom row comes from Theorem~\ref{theorem:1:rocaam},
which guarantees the existence of $\tH((p,w,\phi))$. The uniqueness follows from
the fact that the right vertical arrow is an embedding.

We need to prove that $\tH$ is a functor. It obviously takes identical morphisms to identical morphisms.
To prove that $\tH$ takes
compositions to compositions (in the reversed order), consider the diagram
$$
\begin{tikzcd}[column sep=6em]
H_T^\bullet(\SSigma_{s''})\arrow[swap]{r}{\tH((p',w',\phi'))}\arrow[hook]{d}\arrow[bend left=10pt]{rr}{\tH((p',w',\phi')\circ(p,w,\phi))}&H_T^\bullet(\SSigma_{s'})\arrow[hook]{d}\arrow{r}[swap]{\tH((p,w,\phi))}&H_T^\bullet(\SSigma_s)\arrow[hook]{d}\\
H_T^\bullet(\Gamma_{s''})\arrow{r}{?_{(p',w',\phi')}}\arrow[bend right=10pt]{rr}[swap]{?_{(p'p,w'w,\phi'\phi)}}&H_T^\bullet(\Gamma_{s'})\arrow{r}{?_{(p,w,\phi)}}&H_T^\bullet(\Gamma_s)
\end{tikzcd}
$$
for two morphisms $(p,w,\phi):s\to s'$ and $(p',w',\phi'):s'\to s''$ of $\Fold$.
Here all rectangles and triangles except the top one are commutative. As the vertical arrows are injective the top triangle is also
commutative.

To prove that Diagram~(\ref{eq:cdf}) is commutative, we need to prove that
for any morphism $p:s\to s'$ of $\Seq$, the diagram
$$
\begin{tikzcd}[column sep=6em]
H_T^\bullet(\SSigma_{s'})\arrow{r}{\BS(p)^*}\arrow[hook]{d}&H_T^\bullet(\SSigma_s)\arrow[hook]{d}\\
H_T^\bullet(\Gamma_{s'})\arrow{r}{?_{(p,e,\phi^p)}}&H_T^\bullet(\Gamma_s)
\end{tikzcd}
$$
is commutative. This is however true as $g_{(p,e,\phi^p)}=g\circ\BS(p)|_{\Gamma_s}$
for any $g\in H_T^\bullet(\Gamma_{s'})$, which in turn follows from $\phi^p=\BS(p)|_{\Gamma_s}$.

\section{Topology and Intermediate Categories}\label{Topology}

\subsection{Transition sequences} We discuss here how to determine if a point $[c]^\gamma_s\in U^\gamma_s$ 
belongs to a different chart $U^\delta_s$ and if so how to compute its new coordinates.
In the proof of the following proposition, we use the operations $\tr_i$ and $\pi_i$ defined in Section~\ref{BS}.

\begin{proposition}\label{lemma:1}
Let $s$ be a sequence of simple reflections, where $s_i=s_{\alpha_i}$ for some $\alpha_i\in\Pi$,
$\gamma,\delta\in\Gamma_s$ and $[c]_s^\gamma$ be a point of $U^\gamma_s$. We try to define
the elements $b_0,\ldots,b_{|s|}\in B$ and the numbers $d_1,\ldots,d_{|s|}\in\C$ inductively as follows:
\begin{enumerate}
\item\label{lemma:1:step:1} $b_0=e$;
\item\label{lemma:1:step:2} $b_ix_{\gamma_{i+1}(-\alpha_{i+1})}(c_{i+1})\gamma_{i+1}=x_{\delta_{i+1}(-\alpha_{i+1})}(d_{i+1})\delta_{i+1}b_{i+1}$;
\end{enumerate}
The following conditions are equivalent:
{\renewcommand{\labelenumi}{{\rm\theenumi}}
\renewcommand{\theenumi}{{\rm(\roman{enumi})}}
\begin{enumerate}
\item\label{lemma:1:cond:1} The above algorithm does not stop prematurely;
\item\label{lemma:1:cond:2} $[c]_s^\gamma\in U^\delta_s$.
\end{enumerate}}
If these conditions are satisfied, then $[c]_s^\gamma=[d]_s^\delta$ and
$b_i=(\llbracket d_1,\ldots,d_i\rrbracket^\delta)^{-1}\llbracket c_1,\ldots,c_i\rrbracket^\gamma$.
\end{proposition}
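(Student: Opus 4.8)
The plan is to prove, by induction on $i=0,1,\ldots,|s|$, the following combined statement: the algorithm successfully produces $b_0,\ldots,b_i\in B$ and $d_1,\ldots,d_i\in\C$ (does not stop before step $i$) if and only if $\tr_i([c]^\gamma_s)$ lies in the chart $U^{\delta'}_{s'}$ of $\BS(s')$, where $s'=(s_1,\ldots,s_i)$ and $\gamma',\delta'$ are the length-$i$ truncations of $\gamma,\delta$; and that in this case $\tr_i([c]^\gamma_s)=[d_1,\ldots,d_i]^{\delta'}_{s'}$ together with $b_i=(\llbracket d_1,\ldots,d_i\rrbracket^\delta_s)^{-1}\llbracket c_1,\ldots,c_i\rrbracket^\gamma_s$. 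Two observations make the bookkeeping transparent. First, relation~\ref{lemma:1:step:2} says precisely that the tuples $(x_{\gamma_j(-\alpha_j)}(c_j)\gamma_j)_j$ and $(x_{\delta_j(-\alpha_j)}(d_j)\delta_j)_j$ are related by the $B^{|s|}$-action of~(\ref{eq:BS}) with parameters $b_j^{-1}$; hence as soon as the algorithm terminates we get $[c]^\gamma_s=[d]^\delta_s$, which is the implication \ref{lemma:1:cond:1}$\Rightarrow$\ref{lemma:1:cond:2}. Second, solving~\ref{lemma:1:step:2} for $b_{i+1}$ and telescoping against $b_0=e$ yields the closed formula for $b_i$ automatically (each $b_i$ being in $B$ exactly because the algorithm did not stop), so the only real content is the solvability of each step and its identification with chart membership.

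For the inductive step I isolate a rank-one statement. Fixing $\alpha=\alpha_{i+1}\in\Pi$, set $g=b_i\,x_{\gamma_{i+1}(-\alpha)}(c_{i+1})\gamma_{i+1}$. Since $b_i\in B$ and both $x_{\gamma_{i+1}(-\alpha)}(c_{i+1})$ and $\gamma_{i+1}$ lie in $P_{i+1}$, we have $g\in P_{i+1}$, so $gB$ is a point of $P_{i+1}/B\cong\mathbb P^1$. For $\delta_{i+1}\in\{e,s_\alpha\}$ the map $d\mapsto x_{\delta_{i+1}(-\alpha)}(d)\delta_{i+1}B$ is an injective parametrization of a standard affine chart $V_{\delta_{i+1}}$ of this $\mathbb P^1$, and $V_{\delta_{i+1}}$ is precisely $\mathbb P^1$ minus the single $T$-fixed point $\delta_{i+1}s_\alpha B$. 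Consequently step~\ref{lemma:1:step:2} at level $i+1$ is solvable for some $(d_{i+1},b_{i+1})\in\C\times B$ if and only if $gB\in V_{\delta_{i+1}}$; in that case $d_{i+1}$ is unique, and $b_{i+1}=(x_{\delta_{i+1}(-\alpha)}(d_{i+1})\delta_{i+1})^{-1}g$ automatically lies in $B$ because $gB=x_{\delta_{i+1}(-\alpha)}(d_{i+1})\delta_{i+1}B$. The explicit value of $d_{i+1}$, if needed later, is read off from the commutation formulas~(\ref{eq:comm:1}) and~(\ref{eq:comm:2}), but existence needs only this coset argument.

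It remains to thread the induction, matching this local solvability with membership of $\tr_{i+1}([c]^\gamma_s)$ in the corresponding chart. Using the isomorphism $U^{\delta'}_{s'}\cong\C^i$, the chart coordinates of a point are unique; applying $\tr_i$ shows that if $\tr_{i+1}([c]^\gamma_s)$ lies in its chart with coordinates $d'_1,\ldots,d'_{i+1}$, then $d'_j=d_j$ for $j\le i$ by the inductive hypothesis. Expanding the $B^{i+1}$-equivalence between the truncated $\gamma$- and $\delta$-tuples, the parameters for the first $i$ slots are then forced by uniqueness to equal $b_j^{-1}$, so the carry into slot $i+1$ is $b_i^{-1}$ and the last equation to solve is exactly step~\ref{lemma:1:step:2} at level $i+1$. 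Hence $\tr_{i+1}([c]^\gamma_s)$ lies in its chart if and only if the algorithm reaches step $i+1$, closing the induction; taking $i=|s|$ gives \ref{lemma:1:cond:1}$\Leftrightarrow$\ref{lemma:1:cond:2} and the stated formulas. The main obstacle, and the one point requiring care, is this threading: one must verify that the apparent freedom in the $B$-parameters is genuinely rigidified by the uniqueness of chart coordinates, so that no alternative choice of the earlier $b_j$ could rescue a step that the canonical recursion fails; by contrast the rank-one lemma is elementary $\mathbb P^1$-geometry.
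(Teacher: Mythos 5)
Your proposal is correct and follows essentially the same route as the paper's own proof: an induction along the sequence in which truncation ($\tr_i$) plus uniqueness of chart coordinates forces the already-computed coordinates $d_1,\ldots,d_i$ to agree with those of the target chart, after which the $(i+1)$-st equation is exactly step~(2) and is solvable with $b_{i+1}\in B$. The only cosmetic difference is in how that last equation is extracted: the paper applies the projection $\pi_{i+1}:\BS(s)\to G/B$ to the equality $[c]^\gamma_s=[a]^\delta_s$, whereas you expand the $B^{i+1}$-action slot by slot and invoke uniqueness of its parameters (your rank-one $\mathbb P^1$ lemma is sound but becomes redundant once this threading is done).
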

\begin{proof} We apply the induction on $|s|$. The implication~\ref{lemma:1:cond:1}$\Rightarrow$\ref{lemma:1:cond:2} is obvious. Let us suppose now
that~\ref{lemma:1:cond:2} holds. We write $[c]^\gamma_s=[a]^\delta_s$ for the corresponding coordinates.
Suppose that our algorithm has already produced $b_0,\ldots,b_i$ and $d_1,\ldots,d_i$,
where $i<|s|$. By the inductive hypothesis,
$\displaystyle
[c_1,\ldots,c_i]_{(s_1,\ldots,s_i)}^{(\gamma_1,\ldots,\gamma_i)}=[d_1,\ldots,d_i]_{(s_1,\ldots,s_i)}^{(\delta_1,\ldots,\delta_i)}
$
and
$$
b_i=
\Big(\llbracket d_1,\ldots,d_i\rrbracket_{(s_1,\ldots,s_i)}^{(\delta_1,\ldots,\delta_i)}\Big)^{-1}\llbracket c_1,\ldots,c_i\rrbracket_{(s_1,\ldots,s_i)}^{(\gamma_1,\ldots,\gamma_i)}
=(\llbracket d_1,\ldots,d_i\rrbracket_s^\delta)^{-1}\llbracket c_1,\ldots,c_i\rrbracket_s^\gamma.
$$
Applying $\tr_i$ to $[c]^\gamma_s=[a]^\delta_s$, we get
$$
[d_1,\ldots,d_i]_{(s_1,\ldots,s_i)}^{(\delta_1,\ldots,\delta_i)}=[c_1,\ldots,c_i]^{(\gamma_1,\ldots,\gamma_i)}_{(s_1,\ldots,s_i)}=[a_1,\ldots,a_i]^{(\delta_1,\ldots,\delta_i)}_{(s_1,\ldots,s_i)}
$$
Hence $a_j=d_j$ for $j\le i$. Therefore, applying $\pi_{i+1}$ to $[c]^\gamma_s=[a]^\delta_s$, we get
$$
\llbracket c_1,\ldots,c_i\rrbracket_s^\gamma x_{\gamma_{i+1}(-\alpha_{i+1})}(c_{i+1})\gamma_{i+1}B=
\llbracket d_1,\ldots,d_i\rrbracket_s^\delta x_{\delta_{i+1}(-\alpha_{i+1})}(a_{i+1})\delta_{i+1}B,
$$
which allows us to compute $d_{i+1}=a_{i+1}$ and
$$
b_{i+1}=(x_{\delta_{i+1}(-\alpha_{i+1})}(a_{i+1})\delta_{i+1})^{-1}b_i x_{\gamma_{i+1}(-\alpha_{i+1})}(c_{i+1})\gamma_{i+1}=
(\llbracket d_1,\ldots,d_{i+1}\rrbracket^\delta)^{-1}\llbracket c_1,\ldots,c_{i+1}\rrbracket^\gamma.
$$
\end{proof}

\begin{remark}\rm
It follows from the Bruhat decomposition that $d_{i+1}$ and $b_{i+1}$ in step~\ref{lemma:1:step:2} are defined uniquely
(if they exist).
\end{remark}

The sequence $(b_1,\ldots,b_{|s|})$ in this proposition is called the {\it transition sequence}\label{ts}
of the point $[c]_s^\gamma$ from $U^\gamma_s$ to $U^\delta_s$. Computation of these sequences is
our main technical tool, when dealing with different charts of Bott-Samelson varieties.

The proof of the following simple corollary is left to  the reader.

\begin{corollary}\label{lemma:ww} Let $s$ be a sequence of simple roots, $\gamma\in\Gamma_s$ and $i=1,\ldots,|s|$.
If a point $[c]_s^\gamma$ belongs to $U_s^{\f_i\gamma}$, then $c_i\ne0$ and
the $k$th coordinate of this point in $U_s^{\f_i\gamma}$ is $c_k$ for $k<i$ and is $c_i^{-1}$ for $k=i$.
\end{corollary}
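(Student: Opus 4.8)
The plan is to run the transition-sequence algorithm of Proposition~\ref{lemma:1} with $\delta=\f_i\gamma$ and to read off the first $i$ coordinates $d_1,\dots,d_i$ of the point in $U_s^{\f_i\gamma}$. Since $[c]_s^\gamma$ is assumed to lie in $U_s^{\f_i\gamma}$, the equivalence in Proposition~\ref{lemma:1} guarantees that the algorithm does not stop prematurely and produces elements $b_0,\dots,b_{|s|}\in B$ and numbers $d_1,\dots,d_{|s|}\in\C$ with $[c]_s^\gamma=[d]_s^{\f_i\gamma}$; thus it suffices to compute these $d_k$ for $k\le i$.

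First I would dispose of the positions $k<i$. Here $(\f_i\gamma)_k=\gamma_k$, both as elements of $W$ and as the group elements $e$ or $s_{\alpha_k}(1)$, so step~\ref{lemma:1:step:2} reads $b_{k-1}\,x_{\gamma_k(-\alpha_k)}(c_k)\gamma_k=x_{\gamma_k(-\alpha_k)}(d_k)\gamma_k\,b_k$. Starting from $b_0=e$ and invoking the uniqueness in the Bruhat decomposition (the Remark after Proposition~\ref{lemma:1}), a straightforward induction yields $d_k=c_k$ and $b_k=e$ for every $k<i$; in particular $b_{i-1}=e$. This already gives the asserted coordinates for $k<i$.

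The heart of the matter is the single folded position $i$. With $b_{i-1}=e$, step~\ref{lemma:1:step:2} becomes $x_{\gamma_i(-\alpha_i)}(c_i)\gamma_i=x_{(\f_i\gamma)_i(-\alpha_i)}(d_i)\,(\f_i\gamma)_i\,b_i$, and I would treat the two cases separately, writing $\alpha=\alpha_i$. If $\gamma_i=e$, the left-hand side is $x_{-\alpha}(c_i)$ while $(\f_i\gamma)_i=s_\alpha(1)$, so the equation is $x_\alpha(-d_i)x_{-\alpha}(c_i)=s_\alpha(1)b_i$; matching the left-hand side against~(\ref{eq:comm:2}) forces $c_i\ne0$, and then $d_i=c_i^{-1}$ with $b_i=x_\alpha(c_i)h_\alpha(-c_i)\in B$. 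If $\gamma_i=s_i$, the left-hand side is $x_\alpha(c_i)s_\alpha(1)$ while $(\f_i\gamma)_i=e$, so the equation is $x_\alpha(c_i)s_\alpha(1)=x_{-\alpha}(d_i)b_i$; expanding $s_\alpha(1)=x_\alpha(1)x_{-\alpha}(-1)x_\alpha(1)$ and applying~(\ref{eq:comm:1}), whose hypothesis $cd\ne-1$ translates into $c_i\ne0$, followed by relation~\ref{rel:5} to move the torus factor to the right, again gives $d_i=c_i^{-1}$ and an explicit $b_i\in B$.

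The main obstacle is precisely this position-$i$ computation: one has to recognise that the left-hand side lands in the relevant Bruhat cell exactly when $c_i\ne0$, and to extract the new coordinate $d_i=c_i^{-1}$ from the $\SL_2$ identities~(\ref{eq:comm:1})--(\ref{eq:comm:2}). Once this is settled, the non-premature termination supplied by Proposition~\ref{lemma:1} forces $c_i\ne0$, and combining this with the inductive computation for $k<i$ yields $d_k=c_k$ for $k<i$ and $d_i=c_i^{-1}$, as claimed; note that the coordinates $d_k$ for $k>i$ play no role in the statement and need not be computed.
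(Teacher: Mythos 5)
Your proof is correct and follows exactly the intended route: the paper explicitly leaves this corollary to the reader as a consequence of Proposition~\ref{lemma:1}, and your argument (run the transition-sequence algorithm with $\delta=\f_i\gamma$, get $d_k=c_k$, $b_k=e$ for $k<i$ from Bruhat uniqueness, then extract $c_i\ne0$ and $d_i=c_i^{-1}$ at the folded position from the $\SL_2$ identities~(\ref{eq:comm:1})--(\ref{eq:comm:2})) is precisely what the author has in mind, since the same position-$i$ computations reappear verbatim as equations~(\ref{eq:xminusalpha}) and~(\ref{eq:xalpha}) in the proof of Corollary~\ref{corollary:1}.
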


The following lemma performs step~\ref{lemma:1:step:2} of Proposition~\ref{lemma:1} in an important special case.

\begin{lemma}\label{lemma:x:1}
Suppose that the root system of $G$ is simply laced. Let $\alpha$ be a simple root, $\gamma\in\{e,s_\alpha\}$, $c\in\C$
and
$$
b=t_0x_{\beta_1}(a_1)t_1\cdots t_{m-1}x_{\beta_m}(a_m)t_m,
$$
where $t_0,\ldots,t_m\in T$ and $\beta_i>0$.
Suppose that there exist $c'\in\C$ and $b'\in B$ such that
$$
bx_{\gamma(-\alpha)}(c)\gamma=x_{\gamma(-\alpha)}(c')\gamma b'.
$$
Then $b'$ is a product of some elements of $T$ and root elements of $B$ having the form $x_{\beta_i}(a_i')$ or
$x_{\gamma\beta_i}(a_i'')$.
\end{lemma}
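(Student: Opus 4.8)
The plan is to prove this by commuting the single factor $x_{\gamma(-\alpha)}(c)$ to the left through all of $b$ and conjugating the result by $\gamma$ only at the very end. Write $\delta=\gamma(-\alpha)$. If I can show that pushing $x_\delta(c)$ leftward through $b$ yields $b\,x_\delta(c)=x_\delta(c')\,\tilde b$ for some $c'\in\C$ and some $\tilde b$ that is a product of torus elements and root elements $x_{\beta_i}(\cdot)$ and $x_{s_\alpha\beta_i}(\cdot)$, then substituting into the hypothesis gives $x_\delta(c')\,\tilde b\,\gamma=x_\delta(c')\,\gamma\,b'$, whence $b'=\gamma^{-1}\tilde b\,\gamma$. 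Conjugation by $\gamma$ sends $T$ to $T$ by (R3) and each $x_\zeta(\cdot)$ to $x_{\gamma\zeta}(\pm\cdot)$ by (R4); since $\gamma\in\{e,s_\alpha\}$ and $s_{\gamma(-\alpha)}=s_\alpha$, applying this to the roots $\beta_i$ and $s_\alpha\beta_i$ of $\tilde b$ produces root elements for $\gamma\beta_i$ and $\gamma s_\alpha\beta_i=s_\alpha\gamma\beta_i$, which are exactly the two shapes $x_{\gamma\beta_i}$ and $x_{\beta_i}$ named in the statement. So everything reduces to the leftward commutation.

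I would carry out that commutation by induction on the number $m$ of root factors, pushing $x_\delta$ past $b$ one factor at a time starting from the right. Passing $x_\delta$ through a torus factor merely rescales its parameter by (R5) and creates nothing new. Passing $x_\delta$ through a root factor $x_{\beta_i}(a_i)$ with $\beta_i+\delta\ne0$ is governed by the simply laced commutation formula \eqref{eq:comm}: it reproduces $x_{\beta_i}(a_i)$ and spawns a single new factor with root $s_\delta\beta_i=s_\alpha\beta_i$, which is again a positive root because $s_\alpha$ permutes $R^+\setminus\{\alpha\}$ and here $\beta_i\ne\alpha$. The structural point that makes the induction close is that every spawned factor is created to the \emph{right} of the travelling factor $x_\delta$, so $x_\delta$ never has to be commuted through a factor it has itself produced; hence when it reaches the far left, $\tilde b$ is a product of torus elements and root elements for the roots $\beta_i$ and $s_\alpha\beta_i$ only.

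The one genuinely delicate point, and the step I expect to be the main obstacle, is the collision $\beta_i=\alpha$. When $\gamma=s_\alpha$ we have $\delta=\alpha=\beta_i$, so the two factors lie in the same root group and (R1) simply adds the parameters, leaving no residue. When $\gamma=e$ we have $\delta=-\alpha=-\beta_i$, so $x_{\beta_i}$ and $x_\delta$ lie in opposite root groups and I must use \eqref{eq:comm:1} in place of \eqref{eq:comm}; its output $x_{-\alpha},x_\alpha,h_\alpha$ stays within the allowed shapes (an $x_\alpha=x_{\beta_i}$ factor and a torus factor), so $x_\delta$ continues its journey to the left undisturbed. The dangerous alternative is \eqref{eq:comm:2}, which occurs precisely when the running parameters multiply to $-1$ and which injects a Weyl element $s_\alpha$, pushing the product out of $B$; this is exactly the configuration in which no $b'\in B$ can exist, so the standing hypothesis that $b'$ exists in $B$ rules it out. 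Granting this, the induction goes through, and after the final conjugation by $\gamma$ described in the first paragraph, every root element of $b'$ has the asserted form while its torus factors remain in $T$ by (R3).
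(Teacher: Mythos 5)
Your overall mechanism --- walking the factor $x_{\gamma(-\alpha)}(c)$ through $b$ one factor at a time, with all residues spawned on the right of the travelling factor, and conjugating by $\gamma$ only at the end --- is essentially the paper's computation, and your treatment of torus factors, of $\beta_i\ne\alpha$ via~(\ref{eq:comm}), of the non-collision case $\beta_i=\alpha$ via~(\ref{eq:comm:1}), and of the whole case $\gamma=s_\alpha$ is sound. The gap is your dismissal of the collision case. You assert that the firing of~(\ref{eq:comm:2}) ``is exactly the configuration in which no $b'\in B$ can exist,'' and you invoke the standing hypothesis to rule it out. That assertion is false: whether~(\ref{eq:comm:2}) fires depends on the chosen factorization of $b$, not on the element $b$ itself. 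Concretely, take $\gamma=e$, $c=-1$ and $b=x_{\alpha}(1)\,x_{\alpha}(1)$ read as a product with $m=2$, $\beta_1=\beta_2=\alpha$, $a_1=a_2=1$. Your travelling factor $x_{-\alpha}(-1)$ collides with the rightmost factor, since $1\cdot(-1)=-1$. Yet $b=x_\alpha(2)$ and $2\cdot(-1)\ne-1$, so~(\ref{eq:comm:1}) gives $b\,x_{-\alpha}(-1)=x_{-\alpha}(1)\,x_\alpha(-2)\,h_\alpha(-1)$: a decomposition $x_{-\alpha}(c')b'$ with $b'\in B$ (of the required shape, even) does exist. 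So the hypothesis of the lemma does not exclude collisions, and your induction stops exactly where the real work of the case $\gamma=e$ begins; as written, your argument would ``prove'' that the example above cannot occur.

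What the paper does at a collision is the delicate content of the lemma. After~(\ref{eq:comm:2}) replaces the travelling $x_{-\alpha}(\hat c)$ by $s_\alpha x_\alpha(\hat c)h_\alpha(-\hat c)$, the process continues in the $x_\alpha(\cdot)s_\alpha$-mode of the case $\gamma=s_\alpha$, ending with $b\,x_{-\alpha}(c)=x_\alpha(\tilde c)s_\alpha\tilde b$. Only the outcome $\tilde c=0$ contradicts the existence of $b'\in B$ (by the Bruhat decomposition); if $\tilde c\ne0$ --- which happens precisely when factors to the left of the collision with $\beta_j=\alpha$ feed a nonzero parameter into the travelling element, as in the example --- one rewrites $x_\alpha(\tilde c)s_\alpha=x_{-\alpha}(\tilde c^{-1})\,x_\alpha(-\tilde c)\,t$ with $t\in T$, recovering a legitimate decomposition; its extra factor $x_\alpha(-\tilde c)$ is of the allowed shape because a collision forces $\alpha=\beta_j$ for some $j$. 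This two-sided case analysis after the collision is what your proposal is missing, and it cannot be bypassed by appealing to the hypothesis.
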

\begin{proof} 
First consider the case $\gamma=s_\alpha$. We are going to successfully carry factors $x_{\beta_i}(a_i)$ of $b$ over
elements of the form $x_\alpha(\hat c)s_\alpha$. If $\beta_i=\alpha$, then $x_{\beta_i}(a_i)x_\alpha(\hat c)s_\alpha=x_\alpha(a_i+\hat c)s_\alpha$.
We take then the previous factor $x_{\beta_{i-1}}(a_{i-1})$ if $i>0$.

If $\beta_i\ne\alpha$, then by~(\ref{eq:comm}), we get
$$
x_{\beta_i}(a_i)x_\alpha(\hat c)s_\alpha=
x_\alpha(\hat c)x_{\beta_i}(a_i)x_{s_\alpha\beta_i}(\epsilon\hat c\beta_i)s_\alpha
=x_\alpha(\hat c)s_\alpha x_{s_\alpha\beta_i}(\pm a_i)x_{\beta_i}(\pm\epsilon\hat c\beta_i)
$$
for some $\epsilon\in\{-1,0,1\}$.
As $\beta_i\in R^+\setminus\{\alpha\}$, we get $s_\alpha\beta_i>0$.

Now consider the case $\gamma=e$. We need to successfully carry factors
$x_{\beta_i}(a_i)$ over elements of the form $x_{-\alpha}(\hat c)$. If $\beta_i\ne\alpha$, then by~(\ref{eq:comm}), we get
$$
x_{\beta_i}(a_i)x_{-\alpha}(\hat c)=x_{-\alpha}(\hat c)x_{\beta_i}(a_i)x_{s_\alpha\beta_i}(\epsilon a_i\hat c)
$$
for some $\epsilon\in\{-1,0,1\}$. As $\beta_i\ne\alpha$, we get $s_\alpha\beta_i>0$.

Now suppose that $\beta_i=\alpha$. If $a_i\hat c\ne-1$, then~(\ref{eq:comm:1}) implies
$$
x_{\beta_i}(a_i)x_{-\alpha}(\hat c)=x_{-\alpha}\(\tfrac{\hat c}{a_i\hat c+1}\)x_{\beta_i}(a_i(a_i\hat c+1))h_\alpha(a_i\hat c+1).
$$
If, on the contrary, $a_i\hat c=-1$, then by~(\ref{eq:comm:2}), we get
$$
x_{\beta_i}(a_i)x_{-\alpha}\(\hat c\)=s_\alpha x_{\beta_i}(\hat c)h_\alpha(-\hat c).
$$
We are now in a case similar to the case $\gamma=s_\alpha$ considered above. As a result, we get
$$
bx_{-\alpha}(c)=x_\alpha(\tilde c)s_\alpha\tilde b
$$
for some $\tilde c\in\C$ and $\tilde b$ having the same form as $b'$ in the formulation of the lemma.
If $\tilde c\ne0$, then (as follows from the calculations of the case $\gamma=s_\alpha$) we have
$\alpha=\beta_j$ for some $j$ and the factor $x_{\beta_j}(a'_j)$ is absent in the product $\tilde b$.
Hence
$$
bx_{-\alpha}(c)=x_\alpha(\tilde c)s_\alpha\tilde b=x_\alpha(\tilde c)s_\alpha(-\tilde c)t\tilde b
=x_{-\alpha}(\tilde c^{-1})x_{\beta_j}(-\tilde c)t\tilde b.
$$
for some $t\in T$. It remains to set $c'=\tilde c^{-1}$ and $b'=x_{\beta_j}(-\tilde c)t\tilde b$.

Finally, in the case $\tilde c=0$, we get
$$
s_\alpha x_\alpha(\pm c')s_\alpha^{-1}b'=x_{-\alpha}(c')b'=bx_{-\alpha}(c)=s_\alpha\tilde b
$$
Cancelling out $s_\alpha$, we get $x_\alpha(\pm c')s_\alpha^{-1}b'=\tilde b$, which contradicts the Bruhat decomposition.
\end{proof}

\begin{corollary}\label{corollary:1}
Suppose that the root system of $G$ is simply laced. Let $(b_1,\ldots,b_r)$ be the transition sequence of $x$
from $U^\gamma_s$ to $U^{\f_i\gamma}_s$. Then for $k=i,\ldots,r$, every
$b_k$ is a product of elements of $T$ and the root elements of the form $x_{\tau_k\cdots\tau_{i+1}\alpha_i}(a)$,
where $\tau_j\in\{e,s_j\}$ and $\tau_k\cdots\tau_{i+1}\alpha_i>0$. Moreover, $b_k=e$ for $k<i$.
\end{corollary}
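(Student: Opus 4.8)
The plan is to run the transition algorithm of Proposition~\ref{lemma:1} for the pair $\gamma$, $\delta=\f_i\gamma$ and to track the shape of the elements $b_k$ by induction on $k$, feeding each step into Lemma~\ref{lemma:x:1}. Since $\f_i\gamma$ agrees with $\gamma$ in every position except the $i$th, the recursion splits into three regimes: $k<i$, $k=i$, and $k>i$. I would dispose of the region $k<i$ first. For $j+1<i$ the entries $\gamma_{j+1}$ and $\delta_{j+1}$ coincide, both as Weyl group and as group elements, and the two galleries define the same root $\bbeta_{j+1}$. Starting from $b_0=e$, step~\ref{lemma:1:step:2} is then solved by $d_{j+1}=c_{j+1}$, $b_{j+1}=e$, and by the uniqueness noted after Proposition~\ref{lemma:1} this is the only solution. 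Hence $b_k=e$ for all $k<i$, which is the last assertion of the corollary and also provides $b_{i-1}=e$, the starting point of the induction.

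Next comes the base case $k=i$, the only place where the two galleries differ and where the non-degeneracy enters. Since $x\in U^{\f_i\gamma}_s$, Corollary~\ref{lemma:ww} gives $c_i\ne0$. With $b_{i-1}=e$, step~\ref{lemma:1:step:2} reads $x_{\gamma_i(-\alpha_i)}(c_i)\gamma_i=x_{\delta_i(-\alpha_i)}(d_i)\delta_i b_i$, where $\delta_i=(\f_i\gamma)_i$ is the toggle of $\gamma_i$ inside $\{e,s_{\alpha_i}\}$. In either case $\gamma_i=e$ or $\gamma_i=s_{\alpha_i}$ this is a computation inside the rank-one group $G_{\alpha_i}$ which, using $c_i\ne0$ and relation~(\ref{eq:comm:2}), solves for $d_i$ and exhibits $b_i$ as a product of a torus element and a single factor $x_{\alpha_i}(\cdot)$. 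Since $\alpha_i=\tau_k\cdots\tau_{i+1}\alpha_i$ for $k=i$ (an empty product of reflections) and $\alpha_i>0$, this is exactly the asserted form at $k=i$.

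Finally I would run the inductive step for $k>i$. Here $\delta_k=\gamma_k$, so step~\ref{lemma:1:step:2} takes the form $b_{k-1}\,x_{\gamma_k(-\alpha_k)}(c_k)\gamma_k=x_{\gamma_k(-\alpha_k)}(d_k)\gamma_k\,b_k$, which is precisely the situation of Lemma~\ref{lemma:x:1} with $\alpha=\alpha_k$, $\gamma=\gamma_k\in\{e,s_{\alpha_k}\}$, $b=b_{k-1}$, $c'=d_k$ and $b'=b_k$; the required $b'$ exists because the point lies in $U^{\f_i\gamma}_s$, so the algorithm does not stop. By the inductive hypothesis $b_{k-1}$ is a product of torus elements and root elements $x_{\theta}(\cdot)$ with $\theta=\tau_{k-1}\cdots\tau_{i+1}\alpha_i>0$, so Lemma~\ref{lemma:x:1} presents $b_k$ as a product of torus elements and root elements whose roots lie among $\theta$ and $\gamma_k\theta$, that is, of the form $x_{\tau_k\cdots\tau_{i+1}\alpha_i}(\cdot)$ with $\tau_k\in\{e,s_k\}$. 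Positivity is automatic: for $\gamma_k=e$ the root is $\theta>0$, and for $\gamma_k=s_{\alpha_k}$ inspection of the proof of Lemma~\ref{lemma:x:1} shows that the reflected root $s_{\alpha_k}\theta$ is produced only when $\theta\ne\alpha_k$, in which case $s_{\alpha_k}\theta>0$.

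The only genuinely delicate point is the base case $k=i$: one must keep straight the two meanings of $\gamma_i$ (Weyl element in the subscript, group element on the line) and verify that the toggle to $\delta_i$ together with $c_i\ne0$ really produces a factor supported on the positive root $\alpha_i$ rather than on $-\alpha_i$. Once this is set up correctly, the passage from $b_{k-1}$ to $b_k$ is a direct application of Lemma~\ref{lemma:x:1}, and the bookkeeping of the roots $\tau_k\cdots\tau_{i+1}\alpha_i$ is routine.
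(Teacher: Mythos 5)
Your proposal is correct and follows essentially the same route as the paper: establish $c_i\ne 0$ via Corollary~\ref{lemma:ww}, handle the base case $k=i$ by the explicit rank-one computations (the paper's equations~(\ref{eq:xminusalpha}) and~(\ref{eq:xalpha})), and then obtain each $b_k$ for $k>i$ by induction through Lemma~\ref{lemma:x:1}, with the region $k<i$ giving $b_k=e$ trivially. The only cosmetic differences are that you spell out the $k<i$ and positivity bookkeeping that the paper leaves implicit (positivity is in fact already built into the conclusion of Lemma~\ref{lemma:x:1}, since ``root elements of $B$'' are positive root elements).
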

\begin{proof}
Let $s_i=s_{\alpha_i}$ for $\alpha_i\in\Pi$ and $x=[c]^\gamma_s$. As $x\in U^{\f_i\gamma}_s$, we get $c_i\ne0$ by Corollary~\ref{lemma:ww}.
If $\gamma_i=e$, then
\begin{equation}\label{eq:xminusalpha}
x_{-\alpha_i}(c_i)=x_{\alpha_i}(c_i^{-1})s_{\alpha_i}(-c_i^{-1})x_{\alpha_i}(c_i^{-1})=x_{\alpha_i}(c_i^{-1})s_itx_{\alpha_i}(c_i^{-1})
\end{equation}
for some $t\in T$. If $\gamma_i=s_i$, then
\begin{equation}\label{eq:xalpha}
x_{\alpha_i}(c_i)s_i=x_{\alpha_i}(c_i)s_{\alpha_i}(-c_i)t'=x_{\alpha_i}(c_i)s_{\alpha_i}(-c_i)x_{\alpha_i}(c_i)x_{\alpha_i}(-c_i)t'
=x_{-\alpha}(c_i^{-1})x_{\alpha_i}(-c_i)t'
\end{equation}
for some $t'\in T$. It remains to apply Lemma~\ref{lemma:x:1}.
\end{proof}

\subsection{$T$-curves} Let $X$ be a complex algebraic variety with an algebraic action of a complex torus $T$.
A {\it $T$-curve} is by definition the closure of a one-dimensional orbit of $T$. We say that a $T$-curve {\it connects} $T$-fixed
points $x$ and $y$ if and only if it contains both these points.

\begin{proposition}\label{proposition:T-curve} Let $X$ and $Y $ be complex algebraic varieties with algebraic actions of $T$ and
$\psi:X\to Y$ be a $T$-equivariant map continuous in the metric topology. If two points $x,x'\in X^T$ are connected by a $T$-curve on $X$,
then their images $\psi(x)$ and $\psi(x')$ either coincide or are connected by a $T$-curve on $Y$.
\end{proposition}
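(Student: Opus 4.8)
The plan is to follow the one-dimensional orbit whose closure is the given $T$-curve and control its image under $\psi$ by $T$-equivariance. Let $C$ be a $T$-curve on $X$ connecting $x$ and $x'$, so that $C=\overline{T\cdot z}$ for some $z\in X$ with a one-dimensional orbit $T\cdot z\cong T/\Stab_T(z)$, and with $x,x'\in C$. First I would identify the image orbit: since $\psi(t\cdot z)=t\cdot\psi(z)$ for all $t\in T$, we have $\psi(T\cdot z)=T\cdot\psi(z)$, and moreover $\Stab_T(z)\subset\Stab_T(\psi(z))$. Consequently $T\cdot\psi(z)\cong T/\Stab_T(\psi(z))$ has dimension at most $\dim(T\cdot z)=1$, so it is either a single ($T$-fixed) point or a one-dimensional orbit. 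Note also that $\psi(x)$ and $\psi(x')$ are $T$-fixed points of $Y$, again by equivariance.

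The key topological input is that for a constructible subset of a complex algebraic variety the Zariski closure and the metric closure coincide. An orbit is constructible, being the image of the morphism $T\to X$, $t\mapsto t\cdot z$, so the two notions of $\overline{T\cdot z}$ agree, and likewise for $\overline{T\cdot\psi(z)}$. This is exactly what lets me bridge the metric continuity of $\psi$ with the algebraic definition of a $T$-curve, via the elementary inclusion $\psi(\overline{A})\subset\overline{\psi(A)}$ valid for any continuous map and any set $A$ in the metric topology.

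Now I split into the two cases produced by the dimension count. If $\dim(T\cdot\psi(z))=0$, then $\psi(z)$ is $T$-fixed and $\psi(T\cdot z)=\{\psi(z)\}$; applying the inclusion above with $A=T\cdot z$ gives $\psi(C)\subset\overline{\{\psi(z)\}}=\{\psi(z)\}$, whence $\psi(x)=\psi(x')$ and we land in the first alternative. If instead $\dim(T\cdot\psi(z))=1$, set $C'=\overline{T\cdot\psi(z)}$, which is a $T$-curve on $Y$ by definition. The same inclusion with $A=T\cdot z$ yields $\psi(C)\subset\overline{\psi(T\cdot z)}=\overline{T\cdot\psi(z)}=C'$, and since $x,x'\in C$ we conclude $\psi(x),\psi(x')\in C'$. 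Thus $C'$ contains both image points, i.e. it connects $\psi(x)$ and $\psi(x')$; if these happen to coincide we are again in the first alternative. In every case the asserted dichotomy holds.

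The step I expect to be the main obstacle is precisely the interplay of the two topologies: the hypothesis supplies continuity only in the metric topology, while a $T$-curve is defined as an orbit closure and is naturally Zariski-closed. This is resolved by the coincidence of Zariski and metric closures for the constructible orbits $T\cdot z$ and $T\cdot\psi(z)$; once that is in hand, the remainder is a formal consequence of $T$-equivariance together with the comparison of stabilizers that bounds the dimension of the image orbit.
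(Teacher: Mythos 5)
Your proof is correct and takes essentially the same route as the paper's: both rest on the stabilizer inclusion $\Stab_T(z)\subset\Stab_T(\psi(z))$ forced by equivariance, a dimension count bounding the image orbit's dimension by one, and the coincidence of metric and Zariski closures for the constructible orbits combined with the inclusion $\psi(\overline{A})\subset\overline{\psi(A)}$ for continuous maps. The only difference is presentational: you invoke the orbit-stabilizer dimension formula directly and split into cases on $\dim T\psi(z)$, whereas the paper assumes $\psi(x)\ne\psi(x')$ and extracts the same dimension comparison from fiber-dimension theorems applied to the orbit maps $T\to\overline{Tz}$ and $T\to\overline{T\psi(z)}$.
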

\begin{proof} We suppose that $\psi(x)\ne\psi(x')$. Let $z$ be a point of $X$ such that $\dim Tz=1$ and $x,x'\in A=\overline{Tz}$.
Consider the morphism $\lm:T\to A$ defined by $\lm(t)=tz$. It is obviously dominant.
By~\cite[Theorem 4.1]{Hum}, every irreducible component of $\lm^{-1}(z)=\Stab_T (z)$ has dimension $\dim T-1$.
We consider the closure $B=\overline{T\psi(z)}$ and denote by $n$ its dimension.
By continuity, we get $\psi(x),\psi(x')\in\psi(\overline{Tz})\subset\overline{\psi(Tz)}=B$.
Thus $B$ is not a point and $n\ge1$.

Let $\mu:T\to B$ be the dominant morphism defined by $\mu(t)=t\psi(z)$.
We consider the Zariski open subset $U\subset T\psi(z)\subset B$ as in~\cite[Theorem~4.3]{Hum}.
By part (b) of this theorem, the dimension of every irreducible component
of $\mu^{-1}(t\psi(z))=\Stab_T(t\psi(z))$ is $\dim T-n$ as soon as $t\psi(z)\in U$.
As $\Stab_T(t\psi(z))$ and $\Stab_T(\psi(z))$ are conjugated the same is true about irreducible components
of the latter variety. We clearly have $\Stab_T(z)\subset\Stab_T\psi(z)$ and every irreducible component
of the former variety is contained in some irreducible component of the latter.
Hence $n=1$. 
\end{proof}

\begin{remark}\rm
In this proposition, the map $\psi$ is not supposed to be a morphism of algebraic varieties,
only continuous and $T$-equivariant. In its proof,
we used the coincidence of the metric and Zariski closures for constructible subsets of algebraic varieties.
\end{remark}

We are going to prove a criterion for the existence of $T$-curves connecting
certain points of Bott-Samelson varieties.

\begin{lemma}\label{lemma:fcat:6.5}
Let $\gamma\in\Gamma_s$ and $i=1,\ldots,|s|$. A $T$-curve on $\SSigma_s$ connecting $\gamma$ and $\f_i\gamma$ exists if and only if
there is no $j>i$ such that $\bbeta_i(\gamma)=-\bbeta_j(\gamma)$.
\end{lemma}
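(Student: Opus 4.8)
The plan is to reduce the problem to a rank-one computation by confining every relevant $T$-curve to one of H\"arterich's coordinate subvarieties. Let $\alpha'$ be the positive root with $\bbeta_i(\gamma)=\pm\alpha'$, and let $S=(\ker\alpha')^\circ\subset T$ be the connected codimension-one subtorus on which $\alpha'$ vanishes. Since $\gamma$ and $\f_i\gamma$ agree outside position $i$ and $\bbeta_i(\gamma)=\pm\alpha'$, we have $\f_i\gamma\sim_{\alpha'}\gamma$, so both galleries are $T$-fixed points of the image $Z=\im v^{\alpha'}_\gamma$ of the embedding $v^{\alpha'}_\gamma\colon\BS^\2_{\alpha',\l}\cong(G_{\alpha'}/B_{\alpha'})^\l\hookrightarrow\BS(s)$ recalled in the proof of Theorem~\ref{theorem:1:rocaam}, where $M_{\alpha'}(\gamma)=\{m_1<\cdots<m_\l\}$. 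I would first record that $Z$ is exactly the connected component of the fixed locus $\BS(s)^S$ through $\gamma$: indeed $S$ centralizes $G_{\alpha'}$, so $S$ acts trivially on $Z$ and $Z\subset\BS(s)^S$; both $Z$ and the smooth component $\BS(s)^S_\gamma$ are irreducible of dimension $\l=|M_{\alpha'}(\gamma)|$ (the number of $S$-invariant tangent weights $\bbeta_k(\gamma)\in\C\alpha'$ at $\gamma$), and $Z$ is closed because $\BS^\2_{\alpha',\l}$ is complete; hence they coincide.

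Next I would show that any $T$-curve $C$ connecting $\gamma$ and $\f_i\gamma$ lies in $Z$. Applying the $T$-equivariant projection $\pi_i\colon\BS(s)\to G/B$ and using $(\f_i\gamma)^i=s_{\bbeta_i(\gamma)}\gamma^i$ from~(\ref{eq:-2}), the image $\pi_i(C)$ is a $T$-invariant irreducible curve joining the distinct fixed points $\gamma^i$ and $s_{\alpha'}\gamma^i$ of $G/B$ (cf. Proposition~\ref{proposition:T-curve}); the only $T$-curve between two such points has weight $\pm\alpha'$. Consequently the weight of the one-dimensional $T$-action on $C$ is a nonzero multiple of $\alpha'$, so $S$ fixes $C$ pointwise, i.e. $C\subset\BS(s)^S$. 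As $C$ is connected and passes through $\gamma$, we get $C\subset\BS(s)^S_\gamma=Z$. Thus $\gamma$ and $\f_i\gamma$ are connected by a $T$-curve in $\BS(s)$ if and only if they are connected by one inside $Z\cong(G_{\alpha'}/B_{\alpha'})^\l$.

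It remains to settle the rank-one question in $(G_{\alpha'}/B_{\alpha'})^\l\cong(\mathbb P^1)^\l$, on which $T$ acts through the single character $\alpha'$, diagonally with the same weight on every factor. Here two $T$-fixed points are connected by a $T$-curve precisely when they are comparable in the coordinatewise order $0<\infty$ (orienting each factor so that its weight $-\alpha'$ fixed point is $0$): the closure of a one-dimensional orbit realizes exactly the pairs that differ with a consistent orientation. Reading off the factor at position $m_n$, the fixed point $\gamma$ occupies the end of weight $\bbeta_{m_n}(\gamma)\in\{\pm\alpha'\}$, while by~(\ref{eq:-2}) the fixed point $\f_i\gamma$ occupies the end of weight $\bbeta_{m_n}(\gamma)$ for $m_n<i$ and the opposite end for $m_n\ge i$. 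Hence the two points differ in precisely the factors with $m_n\ge i$, and they are comparable if and only if all the signs $\bbeta_{m_n}(\gamma)$ with $m_n\ge i$ coincide, that is, if and only if there is no $j>i$ with $\bbeta_j(\gamma)=-\bbeta_i(\gamma)$. This is the asserted criterion.

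I expect the main obstacle to be the containment step $C\subset Z$: making precise both that the weight of an arbitrary connecting curve is forced to be proportional to $\alpha'$ and that H\"arterich's subvariety is genuinely the whole $S$-fixed component through $\gamma$, so that no curve can escape it. Once these two facts are in place, the remaining product-of-$\mathbb P^1$'s analysis is elementary.
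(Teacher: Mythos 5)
Your strategy is genuinely different from the paper's, and in outline it works. The paper's proof of Lemma~\ref{lemma:fcat:6.5} stays entirely inside the charts: it runs the transition-sequence algorithm of Proposition~\ref{lemma:1} (via Corollary~\ref{lemma:ww} and the rank-one relations (\ref{eq:comm:1}), (\ref{eq:comm:2})) to show that when $\bbeta_i(\gamma)=-\bbeta_j(\gamma)$ for a minimal $j>i$, any candidate orbit closure is forced to have nonzero $i$th and $j$th coordinates and hence misses one of the two fixed points, and in the converse direction it exhibits the connecting curve $\overline{T[0,\ldots,0,c_i,0,\ldots,0]^\gamma_s}$ explicitly. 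You instead localize at the subtorus $S=(\ker\alpha')^\circ$, identify H\"arterich's subvariety $Z=\im v^{\alpha'}_\gamma$ with the connected component of $\BS(s)^S$ through $\gamma$, trap every connecting $T$-curve inside $Z$ (via its image under $\pi_i$ in $G/B$ and the centrality of $S$ in $T$), and finish with an elementary orbit-closure computation in $(G_{\alpha'}/B_{\alpha'})^\l\cong(\mathbb{P}^1)^\l$. Your route is shorter and more conceptual and explains why only the places in $M_{\alpha'}(\gamma)$ matter; its cost is that it imports standard facts the paper never uses (smoothness of torus-fixed loci together with $T_x(X^S)=(T_xX)^S$, and the classification of $T$-curves in $G/B$), whereas the paper's computation is self-contained and its by-products (Corollary~\ref{corollary:1} and the control of transition sequences) are reused later in Lemma~\ref{lemma:m} and Theorem~\ref{theorem:1}.

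There is, however, one genuine gap, located exactly where the reduction must be converted into the stated sign criterion. You assert that in the factor of $(\mathbb{P}^1)^\l$ indexed by $m_n$, the gallery $\gamma$ sits at the end whose tangent weight is $\bbeta_{m_n}(\gamma)$. Unwinding $v^{\alpha'}_\gamma$ (formula (\ref{eq:-xx})): the $n$th coordinate of a gallery $\tau\sim_{\alpha'}\gamma$ is $\lm^n=\tau^{m_n}(\gamma_{\min}^{m_n})^{-1}\in\{e,s_{\alpha'}\}$, the ends $eB_{\alpha'}$ and $s_{\alpha'}B_{\alpha'}$ carry weights $-\alpha'$ and $\alpha'$, and $\bbeta_{m_n}(\tau)=\lm^n\big(\bbeta_{m_n}(\gamma_{\min})\big)$. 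So your assertion is equivalent to $\bbeta_{m_n}(\gamma_{\min})=-\alpha'$ for every $n$, i.e.\ to $J_{\alpha'}(\gamma_{\min})=\emptyset$, and this is precisely where the $\vartriangleleft$-minimality of $\gamma_{\min}$ must be used: if $\bbeta_{m_n}(\gamma_{\min})=\alpha'$, then $\gamma_{\min}^{m_n}(\alpha_{m_n})<0$, so $(\f_{m_n}\gamma_{\min})^{m_n}=\gamma_{\min}^{m_n}s_{\alpha_{m_n}}<\gamma_{\min}^{m_n}$ in the Bruhat order while $(\f_{m_n}\gamma_{\min})^k=\gamma_{\min}^k$ for $k<m_n$; thus $\f_{m_n}\gamma_{\min}$ lies in the same $\sim_{\alpha'}$-class and is strictly $\vartriangleleft$-smaller, a contradiction. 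This is not cosmetic: what your $(\mathbb{P}^1)^\l$ analysis literally gives is that $\gamma$ must agree with $\gamma_{\min}$ at all differing places $\{n\suchthat m_n\ge i\}$ or disagree at all of them, and turning that into constancy of the sign of $\bbeta_{m_n}(\gamma)$ requires the reference signs $\bbeta_{m_n}(\gamma_{\min})$ to be uniform; with mixed orientations you would land on a different (wrong) criterion. Two further imprecisions are harmless: from $\Stab_T(z)\subset\Stab_T(\pi_i(z))=\ker\alpha'$ and Proposition~\ref{proposition:1.0} you get that $\alpha'$ is an integer multiple of the weight of the curve (not conversely), which still forces the connected group $S$ to act trivially on it; and the set of factors where $\gamma$ and $\f_i\gamma$ differ is $\{n\suchthat m_n\ge i\}$ by (\ref{eq:-2}), as you say.
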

\begin{proof} We set $r=|s|$.
Suppose that some $T$-curve connects points $\gamma$ and $\f_i\gamma$ but
$\bbeta_i(\gamma)=-\bbeta_j(\gamma)$ for some $j\in(i,r]$.
We take $j$ minimal satisfying this property.
Let $z$ be an arbitrary not $T$-stable point on our curve. Then our $T$-curve is equal to $\overline{Tz}$.

As $Tz$ is dense in $\overline{Tz}$, we get $tz\in U^\gamma_s$ for some $t\in T$. But $U^\gamma_s$ is $T$-stable,
whence $z\in U^\gamma_s$. Similarly $z\in U^{\f_i\gamma}_s$. Let us write $z=[c]^\gamma_s=[d]^{\f_i\gamma}_s$
for the corresponding coordinates. Corollary~\ref{lemma:ww} 
shows that $c_k=d_k$ for $k<i$ and $d_i=c_i^{-1}$.
If $c_j$ were not equal to $0$, then the product of the $i$th and the $j$th coordinates of any point of the orbit $Tz$
would be equal to $c_ic_j\ne0$. Thus the point $\gamma=[0]^\gamma_s$ would not belong to the closure of $Tz$ in $U^\gamma_s$
and hence also to the closure $\overline{Tz}$ in the ambient space $\SSigma_s$. Therefore, we assume $c_j=0$ in what follows.

Moreover, as $\dim Tz=1$, the roots of either set $\{\bbeta_k(\gamma)\suchthat c_k\ne0\}$ and $\{\bbeta_k(\f_i\gamma)\suchthat d_k\ne0\}$
lie on the same line and are thus equal to either $\alpha$ or $-\alpha$, where $\alpha=\bbeta_i(\gamma)$.

By the minimality of $j$, we get $\bbeta_k(\gamma)=\alpha$ for any $k\in M_\alpha(\gamma)\cap[i,j)$. Let $(b_1,\ldots,b_r)$ be the transition sequence of $z$
from $U^\gamma_s$ to $U^{\f_i\gamma}_s$. For each integer $l\ge i$, we denote by $l'$ the maximal element
of $M_\alpha(\gamma)$ not greater than $l$.

We are going to prove inductively on $l=i,\ldots,j-1$ that
$
b_l=t_lx_{\gamma_l\cdots\gamma_{l'+1}\alpha_{l'}}(u_l),
$
where $u_l\ne0$ and $t_l\in T$.
As $c_k=d_k$ for $k<i$, we get $b_1=\cdots=b_{i-1}=e$. Thus the case $l=i$ follows
from~(\ref{eq:xminusalpha}) or~(\ref{eq:xalpha}).

We suppose now that the inductive claim holds for $l=i,\ldots,j-2$ and are going to prove it for $l+1$.
Fist consider the case $l+1\notin M_\alpha(\gamma)$. In this case, $c_{l+1}=0$ and we get
$$
b_l\gamma_{l+1}=t_lx_{\gamma_l\cdots\gamma_{l'+1}\alpha_{l'}}(u_l)\gamma_{l+1}
=t_l\gamma_{l+1}x_{\gamma_{l+1}\gamma_l\cdots\gamma_{l'+1}\alpha_{l'}}(\pm u_l).
$$
Thus it suffices to prove that $\gamma_{l+1}\gamma_l\cdots\gamma_{l'+1}\alpha_{l'}>0$.
Suppose the contrary holds. From $b_l\in B$ and $u_l\ne0$, we know $\gamma_l\cdots\gamma_{l'+1}\alpha_{l'}>0$.
Hence $\gamma_{l+1}=s_{l+1}$ and $\gamma_l\cdots\gamma_{l'+1}\alpha_{l'}=\alpha_{l+1}$.
It follows from this equality that $\bbeta_{l+1}(\gamma)=-\bbeta_{l'}(\gamma)=-\alpha$, which contradicts
the fact $l+1\notin M_\alpha(\gamma)$. Note that we have additionally proved that $d_{l+1}=0$.

Now consider the case $l+1\in M_\alpha(\gamma)$. 
It follows from the equality $\bbeta_{l'}(\gamma)=\alpha=\bbeta_{l+1}(\gamma)$ that $\gamma_l\cdots\gamma_{l'+1}\alpha_{l'}=\gamma_{l+1}(\alpha_{l+1})$.
Hence we can write $b_l=t_lx_{\gamma_{l+1}(\alpha_{l+1})}(u_l)$. If $u_lc_{l+1}\ne-1$, then by~(\ref{eq:comm:1}), 
we get
\begin{multline*}
x_{\gamma_{l+1}(\alpha_{l+1})}(u_l)x_{\gamma_{l+1}(-\alpha_{l+1})}(c_{l+1})\gamma_{l+1}\\
=x_{\gamma_{l+1}(-\alpha_{l+1})}\(\tfrac{c_{l+1}}{u_lc_{l+1}+1}\)x_{\gamma_{l+1}(\alpha_{l+1})}(u_l(u_lc_{l+1}+1))h_{\gamma_{l+1}(\alpha_{l+1})}(u_lc_{l+1}+1)\gamma_{l+1}\\
=x_{\gamma_{l+1}(-\alpha_{l+1})}\(\tfrac{c_{l+1}}{u_lc_{l+1}+1}\)\gamma_{l+1}x_{\alpha_{l+1}}(\pm u_l(u_lc_{l+1}+1))h_{\alpha_{l+1}}(u_lc_{l+1}+1).
\end{multline*}
This proves the inductive claim for $l+1$ as $(l+1)'=l+1$. Now consider the case $u_lc_{l+1}=-1$. From~(\ref{eq:comm:2}), 
we get
%
\begin{equation}\label{eq:p}
\begin{array}{l}
b_lx_{\gamma_{l+1}(-\alpha_{l+1})}(c_{l+1})\gamma_{l+1}=t_lx_{\gamma_{l+1}(\alpha_{l+1})}(u_l)x_{\gamma_{l+1}(-\alpha_{l+1})}(c_{l+1})\gamma_{l+1}\\[6pt]
\hspace{98pt}=t_ls_{\gamma_{l+1}(\alpha_{l+1})}x_{\gamma_{l+1}(\alpha_{l+1})}(c_{l+1})h_{\gamma_{l+1}(\alpha_{l+1})}(-c_{l+1})\gamma_{l+1}\\[6pt]
\hspace{208pt}=t_l\gamma_{l+1}s_{\alpha_{l+1}}(\pm1)x_{\alpha_{l+1}}(\pm c_{l+1})h_{\alpha_{l+1}}(-c_{l+1}).
\end{array}
\end{equation}
On the other hand, by the definition of the transition sequence we have
$$
b_lx_{\gamma_{l+1}(-\alpha_{l+1})}(c_{l+1})\gamma_{l+1}=x_{\gamma_{l+1}(-\alpha_{l+1})}(d_{l+1})\gamma_{l+1}b_{l+1}
=\gamma_{l+1}s_{\alpha_{l+1}}x_{\alpha_{l+1}}(\pm d_{l+1})s_{\alpha_{l+1}}^{-1}b_{l+1}.
$$
Comparing this with~(\ref{eq:p}) and cancelling out $\gamma_{l+1}s_{\alpha_{l+1}}$, we come to a contradiction with
the Bruhat decomposition. Thus the case $u_lc_{l+1}=-1$ is impossible.

Finally, let us look at $b_{j-1}$. Similarly, to our previous calculation the equality
$\bbeta_j(\gamma)=-\alpha=-\bbeta_{(j-1)'}(\gamma)$ implies that
$\gamma_{j-1}\gamma_{j-2}\cdots\gamma_{(j-1)'+1}(\alpha_{(j-1)'})=\gamma_j(-\alpha_j)$.
Hence we get $b_{j-1}=t_{j-1}x_{\gamma_j(-\alpha_j)}(u_{j-1})$. As $b_{j-1}\in B$ and $u_{j-1}\ne0$,
we get $\gamma_j=s_j$. The following calculation:
$$
b_{j-1}\gamma_j=t_{j-1}x_{\alpha_j}(u_{j-1})s_j=x_{\alpha_j}(\alpha_j(t_{j-1})u_{j-1})s_j\cdot s_j^{-1}t_{j-1}s_j
$$
proves that $d_j=\alpha_j(t_{j-1})u_{j-1}\ne0$ (and $b_j=s_j^{-1}t_{j-1}s_j$). We get $d_id_j\ne0$. Similarly
to the case $c_j\ne0$, we get that $\f_i\gamma=[0]^{\f_i\gamma}_s$ does not belong to the closure $\overline{Tz}$.

Let us now suppose that there is no $j>i$ such that $\bbeta_i(\gamma)=-\bbeta_j(\gamma)$.
We take some $c_i\in\C^*$ and consider the point $z=[0,\ldots,0,c_i,0,\ldots,0]^\gamma_s$ (with $c_i$ at the $i$th place).
Taking any root $\tau$ such that $\<\bbeta_i(\gamma),\tau\>>0$, we get
$$
\lim_{c\to0} h_\tau(c)z=[0,\ldots0,c^{\<\bbeta_i(\gamma),\tau\>}c_i,0,\ldots,0]^\gamma=[0]^\gamma_s=\gamma.
$$
We claim that $z\in U^{\f_i\gamma}_s$ and $z=[0,\ldots,0,c_i^{-1},0,\ldots,0]^{\f_i\gamma}_s$.
Let us perform the algorithm of Proposition~\ref{lemma:1}.
It obviously produces $d_1=\cdots=d_{i-1}=0$ and $b_0=\cdots=b_{i-1}=e$.
We will prove by induction on $j=i,\ldots,r$ that $d_i=c_i^{-1}$, $d_{i+1}=\cdots=d_j=0$ and
$b_j=t_jx_{\gamma_j\cdots\gamma_{i+1}\alpha_i}(u_j)$, where $t_j\in T$ and $u_j\ne0$.
The case $j=i$ follows from~(\ref{eq:xminusalpha}) or~(\ref{eq:xalpha}).
Let us suppose that the claim is true for $j<r$ and prove it for $j+1$. As we have
$$
x_{\gamma_j\cdots\gamma_{i+1}\alpha_i}(u_j)\gamma_{j+1}=\gamma_{j+1}x_{\gamma_{j+1}\gamma_j\cdots\gamma_{i+1}\alpha_i}(\pm u_j),
$$
it suffices to prove that $\gamma_{j+1}\gamma_j\cdots\gamma_{i+1}\alpha_i>0$. As we have already seen,
the negativity of the root $\gamma_{j+1}\gamma_j\cdots\gamma_{i+1}\alpha_i$ would imply
$\bbeta_i(\gamma)=-\bbeta_{j+1}(\gamma)$, which is a contradiction.
\end{proof}

\subsection{Topological morphisms} We consider now a special class of morphisms of $\Fold$.

\begin{definition}\label{def:1}
A morphism $(p,w,\phi):s\to s'$ of $\widetilde\Seq$ is called topological if there exists a continuous
$T$-equivariant map $f:\SSigma_s\to\SSigma_{s'}$ such that $f|_{\Gamma_s}=\phi$.
\end{definition}

\begin{definition}\label{def:2} A morphism $(p,w,\phi):s\to s'$ of $\widetilde\Seq$ is called curve preserving if
for any points $\gamma$ and $\delta$ of $\Gamma_s$ connected by a $T$-curve on $\SSigma_s$,
the points $\phi(\gamma)$ and $\phi(\delta)$ are connected by a $T$-curve on $\SSigma_{s'}$.
If this condition is only known to hold for $\delta=\f_i\gamma$, then $(p,w,\phi)$ is called
weakly curve preserving.
\end{definition}

The proof of the following lemma is left to the reader.

\begin{lemma}
The composition of topological {\rm(}resp. curve preserving, weakly curve preserving, with identical rotation, with positive sign{\rm)}
morphisms of $\Fold$ is so. Any identity morphism of $\Fold$ is topological, curve preserving, of identical rotation and
of positive sign.
\end{lemma}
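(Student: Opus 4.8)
The plan is to prove the six assertions one at a time, since each property is defined by an independent condition; in every case the argument amounts to unwinding the composition law $(p',w',\phi')\circ(p,w,\phi)=(p'p,w'w,\phi'\phi)$ recorded in Section~\ref{morph}, together with the single structural fact~\ref{F:3} that $\phi(\f_i\gamma)=\f_{p(i)}\phi(\gamma)$.

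For topological morphisms, suppose $(p,w,\phi):s\to s'$ and $(p',w',\phi'):s'\to s''$ are lifted by continuous $T$-equivariant maps $f:\BS(s)\to\BS(s')$ and $f':\BS(s')\to\BS(s'')$ with $f|_{\Gamma_s}=\phi$ and $f'|_{\Gamma_{s'}}=\phi'$. I would simply take $f'\circ f$, which is again continuous and $T$-equivariant. Since $f(\Gamma_s)=\phi(\Gamma_s)\subset\Gamma_{s'}$, restricting to fixed points gives $(f'\circ f)|_{\Gamma_s}=f'|_{\Gamma_{s'}}\circ\phi=\phi'\phi$, which is exactly the fixed-point map of the composite morphism. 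Hence $f'f$ witnesses that the composite is topological.

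For the curve-preserving and weakly curve-preserving cases I would chain the two conditions. If $\gamma,\delta\in\Gamma_s$ are joined by a $T$-curve on $\BS(s)$, then $\phi(\gamma),\phi(\delta)$ are joined by one on $\BS(s')$ (first morphism), hence $\phi'(\phi(\gamma)),\phi'(\phi(\delta))$ are joined by one on $\BS(s'')$ (second morphism); these equal $(\phi'\phi)(\gamma)$ and $(\phi'\phi)(\delta)$, so the composite is curve preserving. The weak version is the one point where care is needed, and I expect it to be the only real subtlety: there I may invoke the hypothesis only for pairs of the form $\gamma,\f_i\gamma$. The key observation is that property~\ref{F:3} forces the image pair to be again of single-fold shape, namely $\phi(\gamma)$ and $\f_{p(i)}\phi(\gamma)=\phi(\f_i\gamma)$. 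Thus the weak hypothesis for $(p',w',\phi')$ applies verbatim at index $p(i)$, and, using $\phi'(\f_{p(i)}\rho)=\f_{p'p(i)}\phi'(\rho)$ for $\rho=\phi(\gamma)$, I recover a $T$-curve between $(\phi'\phi)(\gamma)$ and $\f_{p'p(i)}(\phi'\phi)(\gamma)=(\phi'\phi)(\f_i\gamma)$.

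The two numerical closure properties are immediate from the composition formula: the rotation of $(p'p,w'w,\phi'\phi)$ is $w'w$, which equals $e$ when $w=w'=e$; and its sign is $(\epsilon'_{p(i)}\epsilon_i)_i$, whose entries are all $1$ when both signs are positive. Finally, for the identity morphism $(\id_{[1,r]},e,\id_{\Gamma_s})$ I take the lift $f=\id_{\BS(s)}$ to obtain topologicality, the curve-preserving property holds trivially since $\phi=\id$, the rotation is $e$ by definition, and the sign is positive because $\bbeta_i(\gamma)=\bbeta_i(\gamma)$ gives $\epsilon_i=1$ for every $i$, as already noted in Section~\ref{morph}. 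None of the steps presents a genuine obstacle; the only delicate point, as indicated, is ensuring in the weak case that the single-fold shape is preserved under $\phi$, which is precisely the content of~\ref{F:3}.
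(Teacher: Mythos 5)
Your proof is correct; the paper in fact leaves this lemma to the reader, and your argument --- composing the continuous $T$-equivariant lifts for topologicality, chaining the curve-preservation conditions, using condition~\ref{F:3} (i.e.\ $\phi(\f_i\gamma)=\f_{p(i)}\phi(\gamma)$) so that single-fold pairs stay in single-fold shape for the weak case, and reading off rotation $w'w$ and sign $(\epsilon'_{p(i)}\epsilon_i)_i$ from the composition formula --- is exactly the routine verification the paper intends. The one genuinely delicate point (that the weak hypothesis of the second morphism is applicable at index $p(i)$) is precisely the one you identified and resolved correctly.
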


By Proposition~\ref{proposition:T-curve}, any topological morphism is curve preserving and thus weakly curve preserving.
Lemma~\ref{lemma:fcat:6.5} provides us with a combinatorial criterion for a morphism of $\Fold$ to be weakly curve preserving.
We often use the following special case of this criterion.

\begin{lemma}\label{lemma:w}
Let $(p,w,\phi):s\to s'$ be a morphism of $\Fold$, $\gamma\in\Gamma_s$, $i=1,\ldots,|s|$ and $q=p(i)+1,\ldots,|s'|$ be
such that $q\notin\im p$ and $\bbeta_q(\phi(\gamma))=-\bbeta_{p(i)}(\phi(\gamma))$.
If $\bbeta_i(\gamma)=-\bbeta_k(\gamma)$ holds for no $k$ satisfying $p(i)<p(k)<q$,
then the morphism $(p,w,\phi)$ is not weakly curve preserving.
\end{lemma}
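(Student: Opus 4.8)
The plan is to exhibit a single fold witnessing the failure of weak curve preservation in the sense of Definition~\ref{def:2}: I will produce a gallery $\gamma^*\in\Gamma_s$ such that $\gamma^*$ and $\f_i\gamma^*$ are joined by a $T$-curve on $\SSigma_s$, while their images $\phi(\gamma^*)$ and $\phi(\f_i\gamma^*)=\f_{p(i)}\phi(\gamma^*)$ (the last equality by condition~\ref{F:3}) are not joined by any $T$-curve on $\SSigma_{s'}$. Both connectivity questions are decided by the criterion of Lemma~\ref{lemma:fcat:6.5}. Write $\alpha=\bbeta_i(\gamma)$ and $\alpha'=\bbeta_{p(i)}(\phi(\gamma))$; the hypothesis reads $\bbeta_q(\phi(\gamma))=-\alpha'$, and by the $(p,w)$-pair relation every image position $p(k)$ with $k\in M_\alpha(\gamma)$ carries a wall $\bbeta_{p(k)}(\phi(\gamma))=\pm w\alpha=\pm\alpha'$.

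The gallery $\gamma$ itself need not satisfy the source criterion, since there may be indices $j>i$ with $\bbeta_j(\gamma)=-\alpha$; the point is that the hypothesis forces all such ``bad'' indices past $q$. Indeed, I split the indices $k\in M_\alpha(\gamma)$ with $k>i$ into the near ones, $p(k)<q$, and the far ones, $p(k)>q$ (recall $q\notin\im p$). For a near index the hypothesis gives $\bbeta_k(\gamma)\ne-\alpha$, hence $\bbeta_k(\gamma)=\alpha$; only the far indices can spoil the criterion. Since $p$ is monotone, every far index exceeds every near index and exceeds $i$, so I may correct the far signs without touching anything below. Concretely, I fold $\gamma$ successively at far indices; by~(\ref{eq:-2}) such a fold reflects only walls at positions $\ge$ the fold site, and the suffix flips it induces on the far signs generate all sign patterns, so I can reach a gallery $\gamma^*$ in which every far index also carries the wall $+\alpha$. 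Because each fold occurs at a source index $>i$ whose image exceeds $q$, equation~(\ref{eq:-2}) shows that $\bbeta_i$, the walls at all near indices, the wall $\bbeta_{p(i)}(\phi(\cdot))=\alpha'$, and the wall $\bbeta_q(\phi(\cdot))=-\alpha'$ are all left unchanged.

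Now $\gamma^*$ has $\bbeta_i(\gamma^*)=\alpha$ and no index $j>i$ with $\bbeta_j(\gamma^*)=-\alpha$, so by Lemma~\ref{lemma:fcat:6.5} a $T$-curve joins $\gamma^*$ and $\f_i\gamma^*$ on $\SSigma_s$. On the other hand $\bbeta_q(\phi(\gamma^*))=-\alpha'=-\bbeta_{p(i)}(\phi(\gamma^*))$ with $q>p(i)$, so the same lemma forbids a $T$-curve joining $\phi(\gamma^*)$ and $\f_{p(i)}\phi(\gamma^*)=\phi(\f_i\gamma^*)$ on $\SSigma_{s'}$. This is the required counterexample, whence $(p,w,\phi)$ is not weakly curve preserving. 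The only delicate point, and the reason the hypothesis is phrased through the interval $p(i)<p(k)<q$, is precisely the need to repair the source criterion without disturbing the witness $q$ in the target: the hypothesis guarantees that every index requiring correction maps strictly beyond $q$, so the corrective folds act by reflections supported above $q$ and leave the walls at $p(i)$ and $q$ intact.
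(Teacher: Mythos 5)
Your proof is correct and takes essentially the same route as the paper's: the paper runs a backward induction on the minimal index $k>i$ with $\bbeta_k(\gamma)=-\bbeta_i(\gamma)$, folding at that index (whose image is forced past $q$ by the hypothesis), which unrolled is exactly your greedy sequence of sign-correcting folds at the ``far'' indices, with Lemma~\ref{lemma:fcat:6.5} applied on both source and target just as you do. The only difference is presentational: the paper packages the fold sequence as an induction rather than as an explicit construction of the witness gallery $\gamma^*$.
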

\begin{proof}
Let $k$ be the minimal index such that $k>i$ and $\bbeta_i(\gamma)=-\bbeta_k(\gamma)$ or $+\infty$ if there is no such $k$.
We will prove the lemma by backward induction on $k$, starting with the case $k=+\infty$ obvious by~Lemma~\ref{lemma:fcat:6.5}.
Suppose now that $k<+\infty$. 
By the hypothesis of the lemma, we have $p(k)>q$.
Consider the gallery $\widetilde\gamma=\f_k\gamma$. We have
$$
\bbeta_q(\phi(\widetilde\gamma))=
\bbeta_q(\f_{p(k)}\phi(\gamma))=
\bbeta_q(\phi(\gamma))=-\bbeta_{p(i)}(\phi(\gamma))=-\bbeta_{p(i)}(\f_{p(k)}\phi(\gamma))=
-\bbeta_{p(i)}(\phi(\widetilde\gamma)).
$$
Clearly, the minimal index $\tilde k$ such that $\tilde k>i$ and $\bbeta_i(\widetilde\gamma)=-\bbeta_{\tilde k}(\widetilde\gamma)$
(or $+\infty$ if there is no such $\tilde k$) is greater than $k$.
Thus replacing $\gamma$ with $\widetilde\gamma$, we obtain that $(p,w,\phi)$ is not weakly curve preserving by induction.
\end{proof}

Our next aim is to prove a criterion for a morphism of $\Fold$ to be topological. Unfortunately,
our technique based on $T$-curves allows us to prove such a criterion only if
the root system of $G$ is simply laced.

\begin{proposition}\label{proposition:1.0} Let $\lambda,\mu:T\to\C^*$ be characters such that $\ker\lm\subset\ker\mu$.
Then $\mu=n\lm$ for some integer $n$.
\end{proposition}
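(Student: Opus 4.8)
The plan is to reduce the statement to elementary facts about the character lattice $X(T)$, which is a free abelian group of rank $d=\dim T$. If $\lm$ is the trivial character, then $\ker\lm=T\subset\ker\mu$ forces $\mu$ to be trivial as well, so $\mu=0=0\cdot\lm$; hence I may assume $\lm\ne0$. Writing $X(T)$ additively, every nonzero element factors uniquely as $\lm=e\lm_1$, where $e\ge1$ is an integer and $\lm_1\in X(T)$ is primitive (indivisible). The key structural input is that a primitive element of a free abelian group extends to a $\Z$-basis; this is standard and follows from the theory of elementary divisors (Smith normal form).

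Using this, I would choose a basis $\varepsilon_1,\ldots,\varepsilon_d$ of $X(T)$ with $\varepsilon_1=\lm_1$, which amounts to fixing an isomorphism of algebraic groups $T\cong(\C^*)^d$, $t\mapsto(t_1,\ldots,t_d)$, under which $\lm_1(t)=t_1$ and therefore $\lm(t)=t_1^e$. In these coordinates $\ker\lm=\{t\in T\suchthat t_1^e=1\}$ is the product of the group of $e$th roots of unity in the first factor with the entire subtorus $(\C^*)^{d-1}$ spanned by the remaining factors. Writing the second character as the Laurent monomial $\mu(t)=t_1^{b_1}\cdots t_d^{b_d}$ with $b_i\in\Z$ (every algebraic character of $(\C^*)^d$ has this form), the hypothesis $\ker\lm\subset\ker\mu$ says precisely that $\mu(t)=1$ whenever $t_1^e=1$.

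To extract the exponents I would test this on two families of elements of $\ker\lm$. First, setting $t_1=1$ and letting $t_2,\ldots,t_d$ range over $\C^*$ gives $t_2^{b_2}\cdots t_d^{b_d}=1$ identically, which forces $b_2=\cdots=b_d=0$. Then, taking $t_1=\zeta$ a primitive $e$th root of unity and $t_2=\cdots=t_d=1$ gives $\zeta^{b_1}=1$, so $e\mid b_1$. Writing $b_1=en$ with $n\in\Z$, we obtain $\mu(t)=t_1^{en}=(t_1^e)^n=\lm(t)^n$ for all $t$, that is $\mu=n\lm$, as required.

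The computation itself is immediate; the only point demanding care is the reduction in the first two paragraphs, namely the passage to coordinates in which $\lm$ becomes the $e$th power of a coordinate function. I would stress that the non-connectedness of $\ker\lm$ (the torsion contributed by the $e$th roots of unity) is not a nuisance but exactly the feature producing the divisibility $e\mid b_1$, and hence the integrality of $n$: had one used only the identity component $(\ker\lm)^\circ=(\C^*)^{d-1}$ one would recover $b_2=\cdots=b_d=0$ but merely conclude that $\mu$ is a \emph{rational} multiple of $\lm$.
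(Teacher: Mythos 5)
Your proof is correct and complete. Note that the paper states Proposition~\ref{proposition:1.0} without any proof at all, treating it as a standard fact about characters of algebraic tori, so there is no argument of the author's to compare yours against; what you have written is a legitimate filling-in of that omission. Your route is the natural one: factor $\lm=e\lm_1$ with $\lm_1$ primitive, extend $\lm_1$ to a $\Z$-basis of $X(T)$ (Smith normal form), pass to the resulting coordinates $T\cong(\C^*)^d$ in which $\lm(t)=t_1^e$, and then pin down the exponents of $\mu(t)=t_1^{b_1}\cdots t_d^{b_d}$ by testing on the subtorus $\{t_1=1\}$ (forcing $b_2=\cdots=b_d=0$) and on a primitive $e$th root of unity in the first coordinate (forcing $e\mid b_1$). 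The only external inputs are the equivalence between bases of $X(T)$ and isomorphisms $T\cong(\C^*)^d$, and the fact that a primitive vector extends to a basis; both are standard and correctly invoked. Your closing observation is also exactly right and worth keeping: the torsion part of $\ker\lm$ is what yields the integrality of $n$, while the identity component alone would only give $\mu\in\mathbb{Q}\lm$ inside $X(T)\otimes_\Z\mathbb{Q}$; this distinction matters in the paper's application (Lemma~\ref{lemma:8}), where the conclusion $\bbeta_j(\delta)=\pm\bbeta_i(\gamma)$ is extracted from $n\in\Z$ together with the fact that both characters are roots.
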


\begin{lemma}\label{lemma:8}
Let $(p,w,\phi):s\to s'$ be a topological morphism.
Then $(p,e,\phi):s\to s'$ is a morphism of $\Fold$ of positive sign.
\end{lemma}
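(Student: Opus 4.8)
The plan is to prove that, in fact, $\bbeta_{p(i)}(\phi(\gamma))=\bbeta_i(\gamma)$ for every $i$ and every $\gamma\in\Gamma_s$. Once this is shown, the pair $(\gamma,\phi(\gamma))$ is a $(p,e)$-pair, the folding condition $\phi(\f_i\gamma)=\f_{p(i)}\phi(\gamma)$ is inherited from $(p,w,\phi)$ (it does not involve $w$), so $(p,e,\phi)$ is a morphism of $\Fold$; and its sign is positive precisely because $\bbeta_{p(i)}(\phi(\gamma))=+\bbeta_i(\gamma)$. Writing the defining relation of the given morphism as $\bbeta_{p(i)}(\phi(\gamma))=\epsilon_iw\bbeta_i(\gamma)$, where by Lemma~\ref{lemma:x2} the sign $\epsilon_i=\pm1$ does not depend on $\gamma$, the target identity is equivalent to $\epsilon_iw\bbeta_i(\gamma)=\bbeta_i(\gamma)$.

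The heart of the argument treats those pairs $(\gamma,i)$ for which a $T$-curve connects $\gamma$ and $\f_i\gamma$, that is (by Lemma~\ref{lemma:fcat:6.5}) for which no $j>i$ satisfies $\bbeta_i(\gamma)=-\bbeta_j(\gamma)$. Let $f\colon\BS(s)\to\BS(s')$ be a continuous $T$-equivariant lift of $\phi$. Since $\phi$ is injective (Lemma~\ref{lemma:3}), we have $\phi(\gamma)\ne\f_{p(i)}\phi(\gamma)=\phi(\f_i\gamma)$, so by Proposition~\ref{proposition:T-curve} the source curve is carried by $f$ to a genuine $T$-curve $C'$ connecting $\phi(\gamma)$ and $\f_{p(i)}\phi(\gamma)$. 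I would then match oriented $T$-weights. Taking the generic point $z=[0,\dots,c_i,\dots,0]_s^\gamma$ (nonzero entry at place $i$) of the source curve, its stabilizer is exactly $\ker\bbeta_i(\gamma)$ and, by the explicit $T$-action on the chart $U_s^\gamma$ of Section~\ref{BS}, one has $t\cdot z\to\gamma$ as $\bbeta_i(\gamma)(t)\to0$. On the target side, working in the chart $U_{s'}^{\phi(\gamma)}$ one checks that $C'$ has weight $\pm\bbeta_{p(i)}(\phi(\gamma))$: its generic point must have a nonzero coordinate at place $p(i)$, since otherwise it could not degenerate to $\f_{p(i)}\phi(\gamma)$ (cf. Corollary~\ref{lemma:ww}). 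The stabilizer inclusion $\ker\bbeta_i(\gamma)=\Stab_T(z)\subset\Stab_T(f(z))=\ker\bbeta_{p(i)}(\phi(\gamma))$, together with Proposition~\ref{proposition:1.0} and the fact that both are roots, forces $\bbeta_{p(i)}(\phi(\gamma))=\pm\bbeta_i(\gamma)$; finally the orientation—both $t\cdot z\to\gamma$ and, by continuity of $f$, $t\cdot f(z)\to\phi(\gamma)$ as $\bbeta_i(\gamma)(t)\to0$—excludes the minus sign and yields $\bbeta_{p(i)}(\phi(\gamma))=\bbeta_i(\gamma)$.

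To remove the hypothesis that the $T$-curve exists, I would reduce the general case to the previous one by folding at places $>i$. If some $j>i$ has $\bbeta_j(\gamma)=-\bbeta_i(\gamma)$, take the minimal such $j$ and replace $\gamma$ by $\f_j\gamma$: by~\eqref{eq:-2} this leaves $\bbeta_i$ unchanged (as $j>i$) and turns $\bbeta_j$ into $+\bbeta_i(\gamma)$, while by minimality no place strictly between $i$ and $j$ is altered, so the least place $>i$ carrying $-\bbeta_i(\gamma)$ strictly increases. A backward induction on this place (bounded by $|s|$) terminates at a gallery $\gamma''$ with $\bbeta_i(\gamma'')=\bbeta_i(\gamma)$ for which the curve connecting $\gamma''$ and $\f_i\gamma''$ exists. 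The previous paragraph then gives $\bbeta_{p(i)}(\phi(\gamma''))=\bbeta_i(\gamma'')$, whence $\epsilon_iw\bbeta_i(\gamma)=\epsilon_iw\bbeta_i(\gamma'')=\bbeta_i(\gamma'')=\bbeta_i(\gamma)$; since $\epsilon_i$ and $w$ are independent of the gallery, this identity, and therefore $\bbeta_{p(i)}(\phi(\gamma))=\bbeta_i(\gamma)$, holds for the original $\gamma$ as well.

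I expect the main obstacle to be the weight-matching of the second paragraph: because $f$ is only assumed continuous and $T$-equivariant, not a morphism of varieties, one cannot differentiate and must instead read the weight of each curve off its orbit stabilizer and the explicit charts, and—crucially—pin down the \emph{sign} of the weight from the orientation of the degeneration rather than merely from its kernel. Proposition~\ref{proposition:1.0} is precisely what converts the stabilizer inclusion into an honest proportionality of the two roots.
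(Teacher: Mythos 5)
Your proposal is correct and takes essentially the same route as the paper: the same test point $z$ with a single nonzero coordinate at place $i$, the same stabilizer inclusion $\Stab_T(z)\subset\Stab_T(f(z))$ combined with Proposition~\ref{proposition:1.0} to get $\bbeta_{p(i)}(\phi(\gamma))=\pm\bbeta_i(\gamma)$, the same continuity/degeneration argument (a blowing-up coordinate at place $p(i)$) to exclude the minus sign, and the same reduction of the general case by folding at places $j>i$. The only cosmetic difference is that you route through Proposition~\ref{proposition:T-curve} to produce the image $T$-curve and then place $f(z)$ in the two target charts by density, whereas the paper gets $f(z)\in U^{\phi(\gamma)}_{s'}\cap U^{\phi(\f_i\gamma)}_{s'}$ directly from the $T$-stable open neighbourhoods $U^{\gamma}_s\cap f^{-1}(U^{\phi(\gamma)}_{s'})$ and $U^{\f_i\gamma}_s\cap f^{-1}(U^{\phi(\f_i\gamma)}_{s'})$, never needing that proposition.
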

\begin{proof} We set $r=|\s|$, 
$s_i=s_{\alpha_i}$ and $s'_j=s_{\alpha'_j}$ for simple roots $\alpha_i$ and $\alpha'_j$.
Let $f:\SSigma_s\to\SSigma_{s'}$ be a continuous $T$-equivariant map such that $f|_{\Gamma_s}=\phi$.

Suppose that a gallery $\gamma\in\Gamma_s$ and an index $i=1,\ldots,r$ are
such that $\bbeta_i(\gamma)\ne-\bbeta_j(\gamma)$
for any $j>i$. We set $\widetilde\gamma=\f_i\gamma$, $\delta=\phi(\gamma)$,
$\tilde\delta=\phi(\widetilde\gamma)$ and $j=p(i)$.

Consider a point $z=[0,\ldots,0,c_i,0,\ldots,0]_s^\gamma$ for some $c_i\ne0$ (with $c_i$ at the $i$th place).
The calculations of Lemma~\ref{lemma:fcat:6.5} prove that $z=[0,\ldots,0,c_i^{-1},0,\ldots,0]_s^{\widetilde\gamma}$.
Hence $\gamma,\widetilde\gamma\in\overline{Tz}$.
The set $U^\gamma_s\cap f^{-1}(U^\delta_{s'})$ is a $T$-equivariant open neighbourhood of $\gamma$.
As $\gamma\in\overline{Tz}$, we get $tz\in U^\gamma_s\cap f^{-1}(U^\delta_{s'})$ for some $t\in T$.
By the $T$-equivariance of $U^\gamma_s\cap f^{-1}(U^\delta_{s'})$, we get
$z\in U^\gamma_s\cap f^{-1}(U^\delta_{s'})$. Similar arguments with
$U^{\widetilde\gamma}_s\cap f^{-1}(U^{\tilde\delta}_{s'})$ imply that
$$
z\in U^\gamma_s\cap f^{-1}(U^\delta_{s'})\cap U^{\widetilde\gamma}_s\cap f^{-1}(U^{\tilde\delta}_{s'}).
$$
Let $f(z)=[d]_{s'}^\delta$. By Corollary~\ref{lemma:ww} we get $d_j\ne0$, as $f(z)\in U^{\tilde\delta}_{s'}$.
Hence, we get
$$
\ker\bbeta_i(\gamma)=\Stab_T(z)\subset\Stab_T(f(z))\subset\ker\bbeta_j(\delta).
$$
Proposition~\ref{proposition:1.0} implies $\bbeta_j(\delta)=\epsilon_i\bbeta_i(\gamma)$ for some $\epsilon_i=\pm1$.

Suppose that $\bbeta_j(\delta)=-\bbeta_i(\gamma)$. We get
$
h_{\bbeta_i(\gamma)}(c)z=[0,\ldots,0,c^2c_i,0,\ldots,0]^\gamma,
$
whence
$
\lim_{c\to0}h_{\bbeta_i(\gamma)}(c)z=[0]_s^\gamma=\gamma.
$
On the other hand,
$$
f(h_{\bbeta_i(\gamma)}(c)z)=
h_{\bbeta_i(\gamma)}(c)f(z)=h_{-\bbeta_j(\delta)}(c)f(z)=[\ldots,c^{-2}d_j,\ldots]_s^\delta,
$$
As $d_j\ne0$, the point $f(h_{\bbeta_i(\gamma)}(c)z)$ does not converge to
any point within $U^\delta_{s'}$ including $\delta=f(\gamma)=f(\lim_{c\to0}h_{\bbeta_i(\gamma)}(c)z)$.
This contradicts the continuity of $f$.
We have thus proved $\bbeta_{p(i)}(\phi(\gamma))=\bbeta_i(\gamma)$.

Now let us take an arbitrary gallery $\gamma\in\Gamma_s$ and an index $i=1,\ldots,r$.
There are obviously indices $i<i_1<\cdots<i_k\le r$ such that
$\bbeta_i(\gamma')\ne-\bbeta_j(\gamma')$ for any $j>i$, where $\gamma'=\f_{i_k}\cdots\f_{i_1}\gamma$.
We proved above that $\bbeta_{p(i)}(\phi(\gamma'))=\bbeta_i(\gamma')$.
However, $\bbeta_i(\gamma')=\bbeta(\gamma)$ and
$\bbeta_{p(i)}(\phi(\gamma'))=\bbeta_{p(i)}(\f_{p(i_k)}\cdots\f_{p(i_1)}\phi(\gamma))
=\bbeta_{p(i)}(\phi(\gamma))$, whence $\bbeta_{p(i)}(\phi(\gamma))=\bbeta_i(\gamma)$.
\end{proof}

\begin{lemma}\label{lemma:m} Suppose that the root system of $G$ is simply laced.
Let $s=(s_{\alpha_1},\ldots,s_{\alpha_r})$ and $s'=(s'_{\alpha'_1},\ldots,s_{\alpha'_{r'}})$ be sequences of simple reflections,
where $\alpha_1,\ldots,\alpha_r,\alpha'_1,\ldots,\alpha'_{r'}$ are simple roots.
Let $(p,e,\phi):s\to s'$ be a weakly curve preserving
morphism of $\Fold$ of positive sign. Let $[c]_s^\gamma=[\tilde c]_s^{\f_i\gamma}$ be a point of
the intersection $U_s^\gamma\cap U_s^{\f_i\gamma}$. We define
$$
d_{p(k)}=\sigma_{\alpha'_{p(k)}}^{\phi(\gamma),p(k)}\sigma_{\alpha_k}^{\gamma,k}c_k,\quad
\tilde d_{p(k)}=\sigma_{\alpha'_{p(k)}}^{\phi(\f_i\gamma),p(k)}\sigma_{\alpha_k}^{\f_i\gamma,k}\tilde c_k
$$
for $k=1,\ldots,r$ and $d_l=\tilde d_l=0$ for $l\in[1,r']\setminus \im p$. Then we have
$[d]_{s'}^{\phi(\gamma)}=[\tilde d]_{s'}^{\phi(\f_i\gamma)}$.
\end{lemma}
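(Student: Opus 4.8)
The plan is to verify the identity by running, in the target variety $\BS(s')$, the chart-transition algorithm of Proposition~\ref{lemma:1} for the point $[d]_{s'}^{\phi(\gamma)}$ into the chart $U_{s'}^{\phi(\f_i\gamma)}$, and to show that it produces exactly the coordinates $\tilde d$. Since $\phi(\f_i\gamma)=\f_{p(i)}\phi(\gamma)$, the galleries $\phi(\gamma)$ and $\phi(\f_i\gamma)$ differ only at the position $p(i)$, and the coordinates vanish, $d_l=\tilde d_l=0$, for $l\notin\im p$. Using the embedding $\iota$ of~(\ref{eq:a}), two points of $\BS(s')$ coincide once all their projections $\pi_l$ agree; because the coordinates vanish and the two galleries agree off $\im p$, the projections automatically agree between consecutive image positions (and before the first one, where both products are just $\phi(\gamma)^l$). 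Hence everything reduces to checking equality modulo $B$ of the partial products $\llbracket d_1,\ldots,d_{p(k)}\rrbracket_{s'}^{\phi(\gamma)}$ and $\llbracket \tilde d_1,\ldots,\tilde d_{p(k)}\rrbracket_{s'}^{\phi(\f_i\gamma)}$ for each $k=1,\ldots,r$.

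First I would pass to the straightened form of these products. Because $(p,e,\phi)$ has identical rotation and positive sign, $\bbeta_{p(k)}(\phi(\gamma))=\bbeta_k(\gamma)$, and the factor $\sigma_{\alpha'_{p(k)}}^{\phi(\gamma),p(k)}$ in the definition of $d_{p(k)}$ is inserted precisely so that the $p(k)$th straightened factor of $[d]_{s'}^{\phi(\gamma)}$ equals the $k$th straightened factor $x_{\bbeta_k(\gamma)}(\sigma_{\alpha_k}^{\gamma,k}c_k)$ of $[c]_s^\gamma$, all factors off $\im p$ being trivial. Writing $P_k$ for this common product of root elements, this yields $\llbracket d_1,\ldots,d_{p(k)}\rrbracket_{s'}^{\phi(\gamma)}=P_k\,\phi(\gamma)^{p(k)}$ and $\llbracket c_1,\ldots,c_k\rrbracket_s^\gamma=P_k\,\gamma^k$, with analogous formulas $\llbracket \tilde d_1,\ldots,\tilde d_{p(k)}\rrbracket_{s'}^{\phi(\f_i\gamma)}=\tilde P_k\,\phi(\f_i\gamma)^{p(k)}$ and $\llbracket \tilde c_1,\ldots,\tilde c_k\rrbracket_s^{\f_i\gamma}=\tilde P_k\,(\f_i\gamma)^k$ on the folded side (using that the sign and rotation are preserved under folding by Lemma~\ref{lemma:x2}). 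Feeding in the given source identity $\llbracket c_1,\ldots,c_k\rrbracket_s^\gamma B=\llbracket \tilde c_1,\ldots,\tilde c_k\rrbracket_s^{\f_i\gamma}B$, i.e. $b_k\in B$ for the source transition element $b_k=(\llbracket \tilde c_1,\ldots,\tilde c_k\rrbracket_s^{\f_i\gamma})^{-1}\llbracket c_1,\ldots,c_k\rrbracket_s^\gamma$ of Proposition~\ref{lemma:1}, the target claim becomes the single assertion that $\tilde v_k^{-1}b_k v_k\in B$, where $v_k=(\gamma^k)^{-1}\phi(\gamma)^{p(k)}$ and $\tilde v_k=((\f_i\gamma)^k)^{-1}\phi(\f_i\gamma)^{p(k)}$.

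Next I would simplify this conjugation. Applying~(\ref{eq:-2}) to both $\f_i\gamma$ and $\f_{p(i)}\phi(\gamma)$ and using $\bbeta_{p(i)}(\phi(\gamma))=\bbeta_i(\gamma)$ shows that $v_k$ and $\tilde v_k$ have the same image $\bar v_k$ in $W$, hence differ by an element of $T$. Since conjugation by $T$ preserves $B$, and since by Corollary~\ref{corollary:1} the element $b_k$ is a product of torus elements and positive root elements $x_\rho(a)$ with $\rho=\tau_k\cdots\tau_{i+1}\alpha_i$ an explicit positive translate of $\alpha_i$ (and $b_k=e$ for $k<i$), the problem collapses to the positivity statement $\bar v_k^{-1}\rho>0$ for every such $\rho$. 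Concretely, this is the assertion that running the target transition algorithm never sends a tracked root element negative, and I would prove it by the same inductive scheme as in Lemma~\ref{lemma:fcat:6.5} and Corollary~\ref{corollary:1}: track each $B_l$ as a product of $T$ and positive root elements, establish $D_l=0$ for $l\notin\im p$, and verify $D_{p(i)}=\tilde d_{p(i)}$ via Corollary~\ref{lemma:ww} (the base step, where $c_i\ne0$ forces $d_{p(i)}\ne0$) and $D_{p(k)}=\tilde d_{p(k)}$ at the later image positions, the latter being pure $\sigma$-sign bookkeeping supplied by the straightened-form normalization of the previous paragraph.

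The hard part is the positivity itself, and this is exactly where the weakly-curve-preserving hypothesis enters. The induction can only break at a position $q\notin\im p$ with $q>p(i)$ at which a tracked root element is turned negative, and, as in Lemma~\ref{lemma:fcat:6.5}, this forces $\bbeta_q(\phi(\gamma))=-\bbeta_{p(i)}(\phi(\gamma))$, a target ``opposite wall'' lying outside the image. By Lemma~\ref{lemma:w}, since $(p,e,\phi)$ is weakly curve preserving (Definition~\ref{def:2}), every such $q$ must be preceded by a genuine source opposite wall $\bbeta_{k_0}(\gamma)=-\bbeta_i(\gamma)$ with $p(i)<p(k_0)<q$. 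The crux of the argument is then to show that, at the image position $p(k_0)<q$, this source wall has already transformed the offending root factor in $b_{\,\cdot\,}$ in the controlled way dictated by Corollary~\ref{corollary:1}, so that it is no longer the simple root $\alpha'_q$ that position $q$ would negate; equivalently, the interleaving of source and target opposite walls forced by Lemma~\ref{lemma:w} keeps every tracked root positive all the way to $q$. Verifying this matching is the main obstacle. Once it is in place, the vanishing $D_l=0$ off $\im p$ together with the equalities $D_{p(k)}=\tilde d_{p(k)}$ yield $[d]_{s'}^{\phi(\gamma)}=[\tilde d]_{s'}^{\phi(\f_i\gamma)}$, as desired.
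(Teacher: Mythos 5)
There is a genuine gap. You set up the same reduction as the paper — running the transition algorithm of Proposition~\ref{lemma:1} in $\BS(s')$, writing the image-position transition elements as $\tilde v_k^{-1}b_kv_k$ with $v_k=(\gamma^k)^{-1}\phi(\gamma)^{p(k)}$, checking via~(\ref{eq:-2}) that $v_k$ and $\tilde v_k$ agree in $W$, invoking Corollary~\ref{corollary:1} for the structure of $b_k$, and recognizing that everything hinges on a positivity statement policed by Lemma~\ref{lemma:w} — but you then declare the positivity itself (``Verifying this matching is the main obstacle'') to be open. That positivity \emph{is} the proof; everything before it is bookkeeping. The paper settles it with machinery your sketch does not contain: (a) a strengthened induction hypothesis $(\dagger)$ asserting that every root occurring in $b'_l$ has the explicit form $\tau_l\cdots\tau_{p(i)+1}\alpha'_{p(i)}$ with $\tau_m\in\{e,s'_m\}$ and, crucially, $\tau_m=\delta_m$ for $m\notin\im p$; (b) using that form to build auxiliary galleries — a gallery $\tau\in\Gamma_{s'}$ and its preimage $\xi=\phi^{-1}(\tau)$ in the off-image case, and a gallery $\tau=\f_{i_1}\cdots\f_{i_q}\gamma\in\Gamma_s$ in the image case — together with a careful choice of indices $j$ and $q$, so that Lemma~\ref{lemma:w} can be applied \emph{to these modified galleries}, not to $\gamma$ itself; and (c) in the image case, a chain of identities converting the required inequality $(\delta^{p(k)})^{-1}\gamma^k\tau_k\cdots\tau_{i+1}\alpha_i>0$ into a purely geometric implication about $L_\beta$ not separating chambers $\tau^iC,\tau^kC$ versus $\phi(\tau)^{p(i)}C,\phi(\tau)^{p(k)}C$, proven by its own induction on $k-i$ using Proposition~\ref{proposition:2}. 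Your proposed mechanism (``the source wall has already transformed the offending root factor'') is not how the argument actually closes, and without $(\dagger)$ you cannot even produce the galleries to which Lemma~\ref{lemma:w} must be applied.

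A secondary error: in your first paragraph you claim that agreement of the projections at positions $l\notin\im p$ is ``automatic'' because the coordinates vanish there and the galleries agree off $\im p$. It is not: at such a position $b'_{l+1}=\delta_{l+1}^{-1}b'_l\delta_{l+1}$, and conjugation by $\delta_{l+1}=s'_{l+1}$ can push a positive root element out of $B$ — precisely the case $\tau_l\cdots\tau_{p(i)+1}\alpha'_{p(i)}=\alpha'_{l+1}$ that the paper must exclude using the weakly-curve-preserving hypothesis. Your own fourth paragraph concedes this, contradicting the first; the off-image positions are where half of the work lies, not a triviality.
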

\begin{proof}
We set $\delta=\phi(\gamma)$, $\widetilde\gamma=\f_i\gamma$ and $\tilde\delta=\phi(\f_i\gamma)$. 
Let $(b_1,\ldots,b_r)$ be the transition sequence of $[c]_s^\gamma$ from $U^\gamma_s$ to $U^{\widetilde\gamma}_s$.
We define the sequence $(b'_1,\ldots,b'_{r'})$ of elements of $G$ inductively as follows:
$$
b_0=e,\qquad b'_lx_{\delta_{l+1}(-\alpha'_{l+1})}(d_{l+1})\delta_{l+1}=x_{\tilde\delta_{l+1}(-\alpha'_{l+1})}(\tilde d_{l+1})\tilde\delta_{l+1}b'_{l+1}.
$$
Noninductively these elemts are given by
\begin{equation}\label{eq:19}
b'_{l+1}=(\llbracket \tilde d_1,\ldots,\tilde d_{l+1}\rrbracket^{\tilde\delta}_{s'})^{-1}\llbracket d_1,\ldots,d_{l+1}\rrbracket^\delta_{s'}.
\end{equation}
The definition of elements $b'_l$ resembles that of Proposition~\ref{lemma:1}.
To prove that $(b_1,\ldots,b'_{r'})$ is really the transition sequence of $[d]_{s'}^{\delta}$
form $U_{s'}^\delta$ to $U_{s'}^{\tilde\delta}$, it suffuces to prove that $b'_1,\ldots,b'_{r'}\in B$.
The required equality $[d]_{s'}^{\delta}=[\tilde d]_{s'}^{\tilde\delta}$ will then immediately
follow from Proposition~\ref{lemma:1}.

We will prove inductively on $l=p(i),\ldots,r'$ the following stronger assertion:
\medskip
{\renewcommand{\labelenumi}{$(\dagger)$}
\renewcommand{\theenumi}{$(\dagger)$}
\begin{enumerate}
\item\label{lemma:m:pp:1} $b'_l$ is a product of elements of $T$ and
      the root elements $x_{\tau_l\cdots\tau_{p(i)+1}\alpha'_{p(i)}}(a)$ such that
      $\tau_k\in\{e,s'_k\}$, $\tau_k=\delta_k$ for $k\notin\im p$ and $\tau_l\cdots\tau_{p(i)+1}\alpha'_{p(i)}>0$.
\end{enumerate}}
By Corollary~\ref{lemma:ww}, we have $c_i=\tilde c_i^{-1}$.
Moreover, $\sigma_{\alpha_i}^{\gamma,i}=\sigma_{\alpha_i}^{\widetilde\gamma,i}$ and $\sigma_{\alpha'_{p(i)}}^{\delta,p(i)}=\sigma_{\alpha'_{p(i)}}^{\tilde\delta,p(i)}$.
Hence $\tilde d_{p(i)}=d_{p(i)}^{-1}$ and~\ref{lemma:m:pp:1} for $l=p(i)$ follows from~(\ref{eq:xminusalpha}) and~(\ref{eq:xalpha}).

Suppose now that $p(i)\le l<r'$ and that~\ref{lemma:m:pp:1} holds for $l$.
We will prove this assertion for $l+1$. First consider the case $l+1\notin\im p$.
We have $d_{l+1}=\tilde d_{l+1}=0$ by definition, whence
$$
b'_{l+1}=\delta_{l+1}^{-1}b'_l\delta_{l+1}.
$$
If $\delta_{l+1}=e$, then $b'_{l+1}=b'_l\in B$. This $b'_{l+1}$ has the form required in~\ref{lemma:m:pp:1}.
Suppose now that $\delta_{l+1}=s'_{l+1}$. If $\tau_l\cdots\tau_{p(i)+1}\alpha'_{p(i)}$ is a root as in~\ref{lemma:m:pp:1}
different from $\alpha'_{l+1}$, then we get
$$
(s'_{l+1})^{-1}x_{\tau_l\cdots\tau_{p(i)+1}\alpha'_{p(i)}}(a)s'_{l+1}=x_{\delta_{l+1}\tau_l\cdots\tau_{p(i)+1}\alpha'_{p(i)}}(\pm a),
$$
where $\delta_{l+1}\tau_l\cdots\tau_{p(i)+1}\alpha'_{p(i)}>0$.

Suppose now that
\begin{equation}\label{eq:m:15}
\tau_l\cdots\tau_{p(i)+1}\alpha'_{p(i)}=\alpha'_{l+1}
\end{equation}
for some root $\tau_l\cdots\tau_{p(i)+1}\alpha'_{p(i)}$ as in~\ref{lemma:m:pp:1}.
We are going to prove that this case is impossible. We define the gallery $\tau\in\Gamma_{s'}$
by completing the already existing values $\tau_{p(i)+1},\ldots,\tau_l$ by
$\tau_m=\delta_m$ for $m\in[1,r']\setminus(p(i),l]$.
We set $\beta=\bbeta_{l+1}(\tau)$ and write~(\ref{eq:m:15}) as follows
\begin{equation}\label{eq:m:16}
\bbeta_{p(i)}(\tau)=-\bbeta_{l+1}(\tau)=-\beta.
\end{equation}
Let $j$ be the maximal index of $\im p\cap[p(i),l+1]$ such that $\bbeta_j(\tau)=\pm\beta$. Note that $j$ is well-defined
by~(\ref{eq:m:16}).

We denote by $q$ the minimal index such that $q>j$ and the cambers $\tau^jC$ and $\tau^qC$ are separated by $L_\beta$.
As $\tau_{l+1}=s'_{l+1}$, the gallery $\tau^jC,\ldots,\tau^{l+1}C$ crosses
the hyperplane $L_\beta$, whence $q$ is well-defined and $q\le l+1$. We get
\begin{equation}\label{eq:17}
\bbeta_q(\tau)=-\bbeta_j(\tau)=\pm\beta.
\end{equation}
Hence $q\notin\im p$ by the choice of $j$.

Let us come back to $\Gamma_s$. We set $\xi=\phi^{-1}(\tau)$ and $u=p^{-1}(j)$. If $\bbeta_v(\xi)=\pm\bbeta_u(\xi)$
for some $v>u$ such that $p(v)\le l+1$, then we get
$$
\pm\beta=\pm\bbeta_j(\tau)=\pm\bbeta_{p(u)}(\phi(\xi))=\bbeta_u(\xi)=\pm\bbeta_v(\xi)=\pm\bbeta_{p(v)}(\phi(\xi))=\pm\bbeta_{p(v)}(\tau).
$$
This equality can not hold by the choice of $j$. Applying Lemma~\ref{lemma:w}, we get a contradiction with
the hypothesis that $(p,e,\phi)$ is weakly curve preserving.

Finally, consider the case $l+1\in\im p$. We set $k=p^{-1}(l+1)$. We have $k>i$.
The right-hand side of~(\ref{eq:19}) can be written as follows:
\begin{multline*}
b'_{l+1}=(\tilde\delta^{p(k)})^{-1}x_{\bbeta_{p(k)}(\tilde\delta)}(-\sigma_{\alpha'_{p(k)}}^{\tilde\delta,p(k)}\tilde d_{p(k)})
\cdots x_{\bbeta_{p(1)}(\tilde\delta)}(-\sigma_{\alpha'_{p(1)}}^{\tilde\delta,p(1)}\tilde d_{p(1)})\times\\
\shoveright{\times x_{\bbeta_{p(1)}(\delta)}(\sigma_{\alpha'_{p(1)}}^{\delta,p(1)}d_{p(1)})\cdots x_{\bbeta_{p(k)}(\delta)}(\sigma_{\alpha'_{p(k)}}^{\delta,p(k)}d_{p(k)})\delta^{p(k)}}\\
\shoveleft{=(\tilde\delta^{p(k)})^{-1}\widetilde\gamma^k\cdot(\widetilde\gamma^k)^{-1}x_{\bbeta_k(\widetilde\gamma)}(-\sigma_{\alpha_k}^{\widetilde\gamma,k}\tilde c_k)
\cdots x_{\bbeta_1(\widetilde\gamma)}(-\sigma_{\alpha_1}^{\widetilde\gamma,1}\tilde c_1)\times}\\
\shoveright{\times x_{\bbeta_1(\gamma)}(\sigma_{\alpha_1}^{\gamma,1}c_1)\cdots x_{\bbeta_k(\gamma)}(\sigma_{\alpha_k}^{\gamma,k}c_k)\gamma^k\cdot(\gamma^k)^{-1}\delta^{p(k)}=(\tilde\delta^{p(k)})^{-1}\widetilde\gamma^k b_k (\gamma^k)^{-1}\delta^{p(k)}.}\\
\end{multline*}
To prove that $b'_{l+1}\in B$, we do the following calculation in $W$:
$$
(\tilde\delta^{p(k)})^{-1}\widetilde\gamma^k=(s_{\bbeta_{p(i)}(\delta)}^{\cond{p(i)\le p(k)}}\delta^{p(k)})^{-1}s_{\bbeta_i(\gamma)}^{\cond{i\le k}}\gamma^k
=(\delta^{p(k)})^{-1}\gamma^k.
$$
Hence in $G$, we get
$
b'_{l+1}=t(\delta^{p(k)})^{-1}\gamma^k b_k (\gamma^k)^{-1}\delta^{p(k)}
$
for some $t\in T$. So it suffices to prove that
$
(\delta^{p(k)})^{-1}\gamma^k\tau_k\cdots\tau_{i+1}\alpha_i>0
$, where $\tau_{i+1},\ldots,\tau_k$ are as in Corollary~\ref{corollary:1}. We define a gallery $\tau\in\Gamma_s$ by
completing the already existing values $\tau_{i+1},\ldots,\tau_k$ by $\tau_m=\gamma_m$ for $m\in[1,r]\setminus(i,k]$.
Therefore, we have $\tau=\f_{i_1}\cdots\f_{i_q}\gamma$ for some $i_1,\ldots,i_q\in(i,k]$, whence
\begin{multline*}
(\delta^{p(k)})^{-1}\gamma^k\tau_k\cdots\tau_{i+1}\alpha_i=(\delta^{p(k)})^{-1}\gamma^k(\tau^k)^{-1}\tau^i\alpha_i
=-(\delta^{p(k)})^{-1}\gamma^k((\f_{i_1}\cdots\f_{i_q}\gamma)^k)^{-1}\bbeta_i(\tau)\\
=-(\delta^{p(k)})^{-1}\gamma^k(s_{\bbeta_{i_1}(\f_{i_2}\cdots\f_{i_q}\gamma)}s_{\bbeta_{i_2}(\f_{i_3}\cdots\f_{i_q}\gamma)}\cdots s_{\bbeta_{i_q}(\gamma)}\gamma^k)^{-1}\bbeta_i(\tau)\\
=-(s_{\bbeta_{p(i_1)}(\f_{p(i_2)}\cdots\f_{p(i_q)}\delta)}s_{\bbeta_{p(i_2)}(\f_{p(i_3)}\cdots\f_{p(i_q)}\delta)}\cdots s_{\bbeta_{p(i_q)}(\delta)}\delta^{p(k)})^{-1}\bbeta_{p(i)}(\phi(\tau))\\
=-((\f_{p(i_1)}\cdots\f_{p(i_q)}\delta)^{p(k)})^{-1}\bbeta_{p(i)}(\phi(\tau))
=-(\phi(\f_{i_1}\cdots\f_{i_q}\gamma)^{p(k)})^{-1}\bbeta_{p(i)}(\phi(\tau))\\
=-(\phi(\tau)^{p(k)})^{-1}\bbeta_{p(i)}(\phi(\tau)).
\end{multline*}
By Corollary~\ref{corollary:1}, we have
$-(\tau^k)^{-1}\bbeta_i(\tau)=\tau_k\cdots\tau_{i+1}\alpha_i>0$.
Thus it suffices to prove the implication
$$
(\tau^k)^{-1}\bbeta_i(\tau)<0\Rightarrow(\phi(\tau)^{p(k)})^{-1}\bbeta_{p(i)}(\phi(\tau))<0
$$
for any $\tau\in\Gamma_s$. As $(\tau^i)^{-1}\bbeta_i(\tau)=-\alpha_i<0$, the left-hand side is equivalent to
the fact that $L_{\bbeta_i(\tau)}$ does not separate $\tau^iC$ and $\tau^kC$.
Interpreting similarly the right-hand side, we get an equivalent implication
$$
L_\beta\text{ does not separate }\tau^iC\text{ and }\tau^kC\Rightarrow L_\beta\text{ does not separate }\phi(\tau)^{p(i)}C\text{ and }\phi(\tau)^{p(k)}C,
$$
where $\beta=\bbeta_i(\tau)=\bbeta_{p(i)}(\phi(\tau))$.

We will prove this implication inductively on $k-i$, the case $k=i$ being trivial. If $\bbeta_v(\tau)=\beta$ for some
$v\in(i,k]$, then $\bbeta_{p(v)}(\phi(\tau))=\beta$. By Proposition~\ref{proposition:2}, the hyperplane $L_\beta$ separates neither $\tau^iC$ and $\tau^vC$
nor $\phi(\tau)^{p(i)}C$ and $\phi(\tau)^{p(v)}C$. Thus the implication for the pair $(i,k)$ follows from
the implication for the pair $(v,k)$, which is known to be true by the inductive hypothesis.
Therefore, we will suppose in the rest of the proof that there is no such $v$.

Suppose that $L_\beta$ does not separate $\tau^iC$ and $\tau^kC$ but separates $\phi(\tau)^{p(i)}C$ and $\phi(\tau)^{p(k)}C$.
If $\bbeta_k(\tau)=\pm\beta$ then we get $\bbeta_k(\tau)=\beta$ by Proposition~\ref{proposition:2}.
This case is handled by the previous paragraph.

Therefore, we suppose that $\bbeta_k(\tau)\ne\pm\beta$. In this case, $L_\beta$ does not separate $\tau^{k-1}C$ and $\tau^kC$.
Hence $L_\beta$ also does not separate $\tau^iC$ and $\tau^{k-1}C$. By the inductive hypothesis,
$L_\beta$ does not separate $\phi(\tau)^{p(i)}C$ and $\phi(\tau)^{p(k-1)}C$.
Therefore $L_\beta$ separates $\phi(\tau)^{p(k-1)}C$ and $\phi(\tau)^{p(k)}C$, whence there exists some
$q\in(p(k-1),p(k))$ such that $\bbeta_q(\phi(\tau))=-\beta$.


Let $v$ be the maximal element of $[i,k]$ such that $\bbeta_v(\tau)=\pm\beta$.
As $L_\beta$ does not separate $\tau^iC$ and $\tau^kC$, we get $\bbeta_v(\tau)=\beta$.
Our stipulation above implies that $v=i<k$. By Lemma~\ref{lemma:w}, the morphism $(p,e,\phi)$ is not
weakly curve preserving. This contradicts the hypothesis of the lemma. This proves the implication
and that $b'_{l+1}\in B$.
Lemma~\ref{lemma:x:1} implies now that~\ref{lemma:m:pp:1} holds for $b'_{l+1}$.
\end{proof}

\begin{theorem}\label{theorem:1}
Let $(p,w,\phi):s\to s'$ be a morphism of $\Fold$. Consider the following conditions:
\begin{enumerate}
\item\label{theorem:1:cond:1} $(p,w,\phi)$ is topological;
\item\label{theorem:1:cond:2} $(p,e,\phi):s\to s'$ is a weakly curve preserving morphism of $\Fold$ of positive sign;
\end{enumerate}
For any of root system of $G$,~\ref{theorem:1:cond:1} implies~\ref{theorem:1:cond:2}. If the root system of $G$ is simply laced,
then~\ref{theorem:1:cond:2} implies~\ref{theorem:1:cond:1}.
\end{theorem}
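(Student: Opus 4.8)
The plan is to treat the two implications separately, the first being a short deduction and the second a genuine construction that rests on the chart computations already carried out.

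For \ref{theorem:1:cond:1}$\Rightarrow$\ref{theorem:1:cond:2}, assume $(p,w,\phi)$ is topological and fix a continuous $T$-equivariant map $f\colon\SSigma_s\to\SSigma_{s'}$ with $f|_{\Gamma_s}=\phi$. Lemma~\ref{lemma:8} immediately gives that $(p,e,\phi)$ is a morphism of $\Fold$ of positive sign, so only weak curve preservation remains. For this, suppose $\gamma$ and $\f_i\gamma$ are joined by a $T$-curve on $\SSigma_s$. Proposition~\ref{proposition:T-curve} applied to $f$ forces the points $\phi(\gamma)=f(\gamma)$ and $\phi(\f_i\gamma)=f(\f_i\gamma)$ either to coincide or to be joined by a $T$-curve on $\SSigma_{s'}$; since $\phi$ is injective by Lemma~\ref{lemma:3} and $\gamma\ne\f_i\gamma$, the first alternative is excluded, so they are joined by a $T$-curve, which is exactly weak curve preservation. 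This direction uses nothing about the root system.

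For the converse I assume the root system is simply laced. First note that being topological depends only on $\phi$, since the defining condition $f|_{\Gamma_s}=\phi$ in Definition~\ref{def:1} does not involve $w$; hence it suffices to produce a continuous $T$-equivariant $f$ extending $\phi$, and I may work with the positive-sign, identical-rotation morphism $(p,e,\phi)$. I would define $f$ chart by chart, setting on each $U_s^\gamma\cong\C^r$
$$
f([c]_s^\gamma)=[d]_{s'}^{\phi(\gamma)},\qquad d_{p(k)}=\sigma_{\alpha'_{p(k)}}^{\phi(\gamma),p(k)}\sigma_{\alpha_k}^{\gamma,k}c_k,\quad d_l=0\ \text{for}\ l\notin\im p,
$$
a manifestly polynomial, hence continuous, map $U_s^\gamma\to U_{s'}^{\phi(\gamma)}$ which sends the fixed point $\gamma=[0]_s^\gamma$ to $\phi(\gamma)=[0]_{s'}^{\phi(\gamma)}$, so that $f|_{\Gamma_s}=\phi$ holds automatically.

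The heart of the matter is to check that these chartwise definitions $f_\gamma:=f|_{U_s^\gamma}$ glue, i.e.\ that they agree on every overlap $U_s^\gamma\cap U_s^\delta$. The decisive input is Lemma~\ref{lemma:m}, which says precisely that $f_\gamma$ and $f_{\f_i\gamma}$ agree on $U_s^\gamma\cap U_s^{\f_i\gamma}$; this is where both the weakly curve preserving hypothesis and the simply laced assumption are consumed. To pass from adjacent charts to an arbitrary overlap, I would use that any two galleries differ by a commuting family of folds at the positions where they disagree, and that for a fixed point $x\in U_s^\gamma\cap U_s^\delta$ one can order these folds so as to pass only through charts that still contain $x$ (membership in $U_s^{\f_i\eta}$ forcing the relevant coordinate of $x$ to be nonzero, by Corollary~\ref{lemma:ww}); applying Lemma~\ref{lemma:m} at each step then yields $f_\gamma(x)=f_\delta(x)$. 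I expect this gluing step---controlling exactly which charts contain a given point and organizing the chain of foldings through them---to be the main obstacle, all the analytic difficulty having already been packaged into Lemma~\ref{lemma:m}.

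It then remains to record $T$-equivariance, which is a direct check: the torus scales the $k$th coordinate on $U_s^\gamma$ by $\bbeta_k(\gamma)(t)$ and the $p(k)$th coordinate on $U_{s'}^{\phi(\gamma)}$ by $\bbeta_{p(k)}(\phi(\gamma))(t)$, and these coincide because the sign is positive and the rotation identical, so $\bbeta_{p(k)}(\phi(\gamma))=\bbeta_k(\gamma)$, while the coordinates off $\im p$ stay zero. Being continuous on each member of an open cover, the glued $f$ is continuous on $\SSigma_s$, it is $T$-equivariant, and it restricts to $\phi$ on $\Gamma_s$; hence $(p,w,\phi)$ is topological.
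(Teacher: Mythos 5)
Your treatment of \ref{theorem:1:cond:1}$\Rightarrow$\ref{theorem:1:cond:2} coincides with the paper's (Lemma~\ref{lemma:8} plus Proposition~\ref{proposition:T-curve}; your explicit appeal to injectivity of $\phi$ via Lemma~\ref{lemma:3} to exclude the ``images coincide'' alternative is a point the paper leaves implicit, and it is correct). Likewise, your chart-by-chart formula for $f$ is exactly the paper's map $\psi^\gamma$, and you invoke Lemma~\ref{lemma:m} for adjacent charts exactly as the paper does.

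The genuine gap is the gluing step, which you yourself flag as ``the main obstacle'' but do not close. You need: for every $x\in U^\gamma_s\cap U^\delta_s$ there is an ordering of the folds taking $\gamma$ to $\delta$ such that every intermediate chart contains $x$. This is nowhere proved, and the one justification you offer runs Corollary~\ref{lemma:ww} in the wrong direction: that corollary says membership in $U^{\f_i\eta}_s$ \emph{forces} the $i$th coordinate to be nonzero; to steer a chain of folds through charts containing $x$ you would need the converse, and the converse is false. Already for $G=\SL_2(\C)$, $s=(s_\alpha,s_\alpha)$, $\gamma=(e,e)$, the point $x=[c,-c]^\gamma_s$ with $c\ne0$ has both coordinates nonzero, yet $x\notin U^{\f_1\gamma}_s$: indeed $\pi_2(x)=x_{-\alpha}(c)x_{-\alpha}(-c)B=eB$, whereas $\pi_2$ of any point $[x_\alpha(d_1)s_\alpha,x_{-\alpha}(d_2)]$ of $U^{\f_1\gamma}_s$ equals $x_\alpha(d_1\pm d_2)s_\alpha B\ne eB$. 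So nonvanishing of coordinates gives no control over which charts contain $x$, membership in a non-adjacent chart is governed by nontrivial transition functions (Proposition~\ref{lemma:1}), and your chaining claim remains an open combinatorial assertion (allowing detours through galleries differing from both $\gamma$ and $\delta$ does not obviously help either). The paper bypasses this pointwise argument entirely with algebraic geometry: each $\psi^\gamma$ is a morphism of varieties; fold-adjacent ones agree on their overlap, hence all $\psi^\gamma$ agree on $U=\bigcap_{\eta\in\Gamma_s}U^\eta_s$, which is nonempty since $\BS(s)$ is irreducible; then for arbitrary $\gamma,\delta$ the agreement locus of $\psi^\gamma$ and $\psi^\delta$ is Zariski closed in the irreducible open set $U^\gamma_s\cap U^\delta_s$ and contains the dense subset $U$, so equals the whole overlap, after which the maps glue by the affine criterion. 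Since your chartwise maps are polynomial, this argument applies verbatim to your construction; substituting it for the unproved chaining claim (or actually proving that claim) is necessary to make your proof complete.
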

\begin{proof}  Suppose that~\ref{theorem:1:cond:1} holds. By Proposition~\ref{lemma:8}, the triple $(p,e,\phi)$ is a morphism of
$\Fold$ of positive sign. By Proposition~\ref{proposition:T-curve}, the morphism $(p,e,\phi)$ is curve preserving,
thus also weakly curve preserving.

Suppose now that the root system of $G$ is simply laced and~\ref{theorem:1:cond:2} holds.
We set $r=|s|$ and $r'=|s'|$ for brevity.
For any $\gamma\in\Gamma_s$, we define the map $\psi^\gamma:U^\gamma_s\to U^{\phi(\gamma)}_{s'}\hookrightarrow\SSigma_{s'}$ by
$$
\psi^\gamma([c_1,\ldots,c_r]^\gamma)=[d_1,\ldots,d_{r'}]^{\phi(\gamma)},\text{ where }
d_j=\left\{
\begin{array}{ll}
\sigma_{\alpha'_{p(k)}}^{\phi(\gamma),p(k)}\sigma_{\alpha_k}^{\gamma,k}c_k&\text{ if }j=p(k);\\[6pt]
0&\text{ if }j\notin\im p.
\end{array}
\right.
$$
This map is obviously a $T$-equivariant morphism of algebraic varieties. 
By Lemma~\ref{lemma:m}, the morphisms $\psi^\gamma$ and $\psi^{\f_i\gamma}$ coincide on the intersection
$U^\gamma_s\cap U^{\f_i\gamma}_s$. Therefore all morphisms $\psi^\gamma$ coincide on
$U=\bigcap_{\gamma\in\Gamma_s} U_s^\gamma$. This intersection is nonempty, as $\SSigma_s$ is irreducible.
Let us take any two galleries $\gamma,\delta\in\Gamma_s$.
The intersection $U_s^\gamma\cap U_s^\delta$ is irreducible in the Zarisski topology.
The restrictions of $\psi^\gamma$ and $\psi^\delta$ to this set coincide on a Zarisski closed subset, which contains~$U$.
By irreducibility the morphisms $\psi^\gamma$ and $\psi^\delta$ coincide on $U_s^\gamma\cap U_s^\delta$.
Therefore we can glue all morphisms $\psi^\gamma$ to one $T$-equivariant morphism $\psi$ of varieties (see~\cite[Proposition~2.3]{Hum}
about the affine criterion). Clearly, $\psi|_{\Gamma_s}=\phi$.
\end{proof}

\subsection{Intermediate categories} We are now ready to answer the question raised in the introduction about
the existence of a functor $\BS':\Seq'\to\Top(T)$ that makes Diagram~(\ref{eq:z}) commutative. We begin with the following
easy observation.

\begin{lemma}\label{lemma:z}
Let $c$ be a nonzero element of $S$ and $w$ be a nonidentical element of $W$. Then there exists $b\in S$ such that $w(bc)\ne(bc)$.
\end{lemma}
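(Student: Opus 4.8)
The plan is to reduce the statement to two structural facts about $S$: that it is an integral domain, and that the nonidentity element $w$ acts on it nontrivially. Recall that $S$ is the symmetric algebra $\mathrm{Sym}_k(V)$ of the free $k$-module $V=X(T)\otimes_\Z k$, i.e. a polynomial ring $k[x_1,\ldots,x_n]$ over the PID $k$; since a polynomial ring over a domain is again a domain, $S$ is an integral domain.

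Granting this, I would argue by contradiction. Suppose that $w(bc)=bc$ for \emph{every} $b\in S$. Taking $b=1$ gives $w(c)=c$. For an arbitrary $b$, using that $w$ acts as a ring automorphism of $S$, we get
\[
w(b)\,c=w(b)\,w(c)=w(bc)=bc,
\]
so $(w(b)-b)\,c=0$. Since $S$ is a domain and $c\neq 0$ by hypothesis, this forces $w(b)=b$ for all $b\in S$; that is, $w$ fixes $S$ pointwise.

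It remains to derive a contradiction by exhibiting an element of $S$ that $w$ moves. Because $S$ is generated over $k$ by its degree-one part $V$, it suffices to find $\lambda\in V$ with $w\lambda\neq\lambda$. Here I would invoke faithfulness of the $W$-action: $W$ embeds in $\mathrm{GL}(E)$ with $E=X(T)\otimes\R$, and $X(T)$ is a full-rank $W$-stable lattice in $E$, so $W$ acts faithfully on $X(T)$ over $\Z$; hence some character $\lambda\in X(T)$ satisfies $w\lambda\neq\lambda$. This contradicts the conclusion that $w$ fixes $S$ pointwise, so the assumption was false and the required $b$ exists.

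The main obstacle is precisely this last step: one must ensure that the motion $w\lambda\neq\lambda$, valid over $\Z$, survives after tensoring with $k$, i.e. that $w$ still acts nontrivially on $V=X(T)\otimes_\Z k$. This is automatic in characteristic $0$ but genuinely relies on the standing hypotheses on $k$ — it is exactly why $2$ is inverted when a component of type $C_n$ is present, since otherwise a reflection can collapse to the identity modulo $2$. I would therefore isolate the claim \emph{``$w$ acts nontrivially on $S$''} as the single point that uses the coefficient-ring assumption, the remainder of the argument being purely formal.
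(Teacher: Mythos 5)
Your proof is correct and is essentially the paper's own argument: both hinge on $S$ being an integral domain, so that once $w(c)=c$ is forced (your $b=1$ step) one can cancel $c$ and reduce the lemma to exhibiting a degree-one element of $S$ moved by $w$. The paper packages this as two cases --- take $b=1$ if $wc\ne c$, otherwise take $b=\alpha$ for a root $\alpha$ with $w\alpha\ne\alpha$ and cancel $c$ in $(w\alpha)c=\alpha c$ --- whereas you run a single contradiction argument with an arbitrary character $\lambda\in X(T)$; that difference is cosmetic. The one substantive remark: the ``obstacle'' you flag (that $w\lambda\ne\lambda$ over $\Z$ must survive tensoring with $k$) is not an extra gap of your argument relative to the paper's. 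The paper's contradiction $\alpha=w\alpha$ is an identity in $S$, i.e.\ in $X(T)\otimes_\Z k$, so the paper implicitly assumes exactly the same thing: that the root $\alpha$ and $w\alpha$, distinct in $E$, remain distinct after tensoring with $k$ --- which is precisely what can fail in characteristic $2$ (e.g.\ a long-root reflection in type $C_n$ acts trivially on $X(T)\otimes\mathbb{F}_2$), and is why the standing hypothesis on $k$ is needed. So your explicit isolation of that claim is, if anything, more careful than the paper's treatment, though like the paper you leave its verification under the stated hypotheses on $k$ to the reader.
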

\begin{proof}
If $wc\ne c$, then we can take $b=1$. Suppose that on the contrary $wc=c$. As $w\ne e$, there exists a root $\alpha\in R$
such that $w\alpha\ne\alpha$. Suppose that $w(\alpha c)=\alpha c$.
Then we get
$$
\alpha c=w(\alpha c)=(w\alpha)(wc)=(w\alpha)c.
$$
As there are no zero devisors in $S$, cancelling out $c\ne0$, we get a contradiction $\alpha=w\alpha$.
Hence we take $b=\alpha$ in this case.
\end{proof}

\begin{theorem}\label{theorem:cat}
%
Let $\Seq'$ be an intermediate category between $\Seq$ and $\Fold$.
Consider the following conditions:
\begin{enumerate}
\item\label{theorem:cat:cond:1} A functor $\BS'$ making Diagram~(\ref{eq:z}) commutative exists;
\item\label{theorem:cat:cond:2} All morphisms of the category $\Seq'$ are topological and of identical rotation
\end{enumerate}
For any of root system of $G$,~\ref{theorem:cat:cond:1} implies~\ref{theorem:cat:cond:2}.
If the root system of $G$ is simply laced,
then~\ref{theorem:cat:cond:2} implies~\ref{theorem:cat:cond:1}.

\end{theorem}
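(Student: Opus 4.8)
The plan is to read off from Diagram~(\ref{eq:z}) exactly what a functor $\BS'$ must be and then to match this against the two invariants of a morphism of $\Fold$, its rotation $w$ and its sign. Commutativity of the diagram means precisely that $\BS'$ restricts to $\BS$ on $\Seq$ and that $H^\bullet_T\circ\BS'=\widetilde H|_{\Seq'}$. In particular, for every morphism $(p,w,\phi):s\to s'$ of $\Seq'$ the continuous $T$-equivariant map $\BS'((p,w,\phi)):\BS(s)\to\BS(s')$ must induce on cohomology the operator whose description on $T$-fixed points, by Theorem~\ref{theorem:1:rocaam} and Diagram~(\ref{eq:zz}), is $g\mapsto[\gamma\mapsto w^{-1}(g(\phi(\gamma)))]$. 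Since the restriction $H_T^\bullet(\BS(s))\hookrightarrow H_T^\bullet(\Gamma_s)=\Map(\Gamma_s,S)$ is injective, everything can be tested on galleries, where $H_T^\bullet(f)$ is precomposition by $f^T=f|_{\Gamma_s}$.

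For the implication \ref{theorem:cat:cond:1}$\Rightarrow$\ref{theorem:cat:cond:2} (valid for any root system) I would fix a morphism $(p,w,\phi)$ of $\Seq'$ and put $f=\BS'((p,w,\phi))$. Comparing the two descriptions of $H_T^\bullet(f)$ on fixed points gives $g(f^T(\gamma))=w^{-1}(g(\phi(\gamma)))$ for all $g\in\X(s')$ and $\gamma\in\Gamma_s$. Testing this on the constant functions $\lambda\in S\subset\X(s')$ (which lie in $\X(s')$ as $S$-multiples of the unit) yields $\lambda=w^{-1}(\lambda)$ for every $\lambda\in S$, so $w=e$ by Lemma~\ref{lemma:z}; the rotation is forced to be identical. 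With $w=e$ the identity becomes $g(f^T(\gamma))=g(\phi(\gamma))$ for all $g\in\X(s')$, and since $\X(s')$ separates the points of $\Gamma_{s'}$ (after localization the restriction to fixed points is an isomorphism onto $\Map(\Gamma_{s'},Q)$) we conclude $f^T=\phi$. Hence $f$ is a continuous $T$-equivariant lift of $\phi$ and $(p,w,\phi)$ is topological.

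For the converse \ref{theorem:cat:cond:2}$\Rightarrow$\ref{theorem:cat:cond:1} in the simply laced case I would build $\BS'$ from the explicit lifts of the proof of Theorem~\ref{theorem:1}. On objects set $\BS'=\BS$; for a morphism $(p,e,\phi)$ of $\Seq'$ (topological of identical rotation by hypothesis) let $\BS'((p,e,\phi))$ be the $T$-equivariant morphism of varieties $\psi$ glued from the chart maps $\psi^\gamma:U^\gamma_s\to U^{\phi(\gamma)}_{s'}$, $[c]^\gamma\mapsto[d]^{\phi(\gamma)}$ with $d_{p(k)}=\sigma^{\phi(\gamma),p(k)}_{\alpha'_{p(k)}}\sigma^{\gamma,k}_{\alpha_k}c_k$ and $d_l=0$ for $l\notin\im p$. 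Functoriality is then a direct computation: $\psi$ carries $U^\gamma_s$ into $U^{\phi(\gamma)}_{s'}$, so for a second morphism $(p',e,\phi')$ the composite $\BS'((p',e,\phi'))\circ\BS'((p,e,\phi))$ is again chart-compatible and on $U^\gamma_s$ sends $c_k$ to $\sigma^{(\phi'\phi)(\gamma),p'p(k)}_{\alpha''_{p'p(k)}}\big(\sigma^{\phi(\gamma),p(k)}_{\alpha'_{p(k)}}\big)^2\sigma^{\gamma,k}_{\alpha_k}c_k$; as each $\sigma=\pm1$ squares to $1$, this equals the coordinate formula of $\BS'((p'p,e,\phi'\phi))$, and the identity morphism goes to the identity for the same reason. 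On fixed points $H_T^\bullet(\BS'((p,e,\phi)))$ is $g\mapsto g\circ\phi$, which (as $w=e$) is exactly the operator $?_{(p,e,\phi)}$ of Diagram~(\ref{eq:zz}); by injectivity of the restriction to fixed points the two coincide, giving $H_T^\bullet\circ\BS'=\widetilde H|_{\Seq'}$. Finally $\BS'|_{\Seq}=\BS$ holds because for $p\in\Mor(\Seq)$ the factors $\sigma^e_\beta=1$ coming from the inserted neutral entries collapse $\sigma^{\phi^p(\gamma),p(k)}_{\alpha'_{p(k)}}$ to $\sigma^{\gamma,k}_{\alpha_k}$, so the coefficient of $c_k$ is $(\sigma^{\gamma,k}_{\alpha_k})^2=1$ and $\psi$ reduces to the canonical embedding $\BS(p)$.

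The main obstacle is the rigidity that forces $w=e$: everything hinges on the fact that a genuine pullback $H_T^\bullet(f)$ is $S$-linear whereas the operator attached to $(p,w,\phi)$ is only $w^{-1}$-semilinear, so that Lemma~\ref{lemma:z} applied to constants kills the rotation. On the constructive side the delicate point is not the existence of lifts (supplied by Theorem~\ref{theorem:1}) but their functoriality, which survives only because the sign cocycle $\sigma^{\gamma,k}_{\alpha}$ takes values in $\{\pm1\}$ and the overcounted middle factor squares away.
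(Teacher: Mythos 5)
Your proof is correct, and for the implication \ref{theorem:cat:cond:2}$\Rightarrow$\ref{theorem:cat:cond:1} it is essentially identical to the paper's: $\BS'$ is defined on objects as $\BS$ and on morphisms by gluing the chart maps $\psi^\gamma$ from the proof of Theorem~\ref{theorem:1}; functoriality comes down to the cancellation of the middle factor $\big(\sigma^{\phi(\gamma),p(k)}_{\alpha'_{p(k)}}\big)^2=1$, and the two triangles of Diagram~(\ref{eq:z}) are verified exactly as you verify them (the left one via the collapse $\sigma^{\phi^p(\gamma),p(k)}_{\alpha'_{p(k)}}=\sigma^{\gamma,k}_{\alpha_k}$, the right one by restriction to fixed points and injectivity). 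For \ref{theorem:cat:cond:1}$\Rightarrow$\ref{theorem:cat:cond:2} you follow the same skeleton as the paper — equate $f^*=\BS'((p,w,\phi))^*$ with $\widetilde H((p,w,\phi))$ on fixed points and then invoke Lemma~\ref{lemma:z} — but you reorganize the two steps. The paper first builds a concentration element $v\in\X(s')$ vanishing at every gallery except $\phi(\gamma)$ (via the operators $\nabla$ of \cite{scbs}), deduces $f(\gamma)=\phi(\gamma)$ from $w^{-1}(v\circ\phi(\gamma))=v\circ f(\gamma)$, and only afterwards kills $w$ by applying Lemma~\ref{lemma:z} to $c=v(\phi(\gamma))$ and replacing $v$ by $bv$. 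You instead kill $w$ first by evaluating the identity $g(f^T(\gamma))=w^{-1}(g(\phi(\gamma)))$ at constant functions $\lambda\in S\subset\X(s')$, which requires no knowledge of $f^T$ and isolates the linear-versus-semilinear obstruction cleanly; then you recover $f^T=\phi$ from the fact that $\X(s')$ separates the points of $\Gamma_{s'}$. The one caveat is that your separation claim is justified by the localization theorem, which the paper never states explicitly (it relies on it only implicitly, e.g.\ for $D\X(s)$ and H\"arterich's criterion); to stay strictly inside the paper's toolkit you could replace that appeal by the same concentration elements $v$ the paper uses, which furnish the separating functions directly. Aside from this, the two proofs buy the same thing, with yours slightly shorter on the rigidity step.
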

\begin{proof}
First suppose that~\ref{theorem:cat:cond:1} holds.
As $\Obj(\Seq')=\Obj(\Seq)$, we get $\BS'(s)=\SSigma_s$ for any object $s$.

Let $(p,w,\phi):s\to s'$ be a morphism of $\Seq'$.
Consider the $T$-equivariant continuous map $f=\BS'((p,w,\phi)):\SSigma_s\to\SSigma_{s'}$.
We claim that $f|_{\Gamma_s}=\phi$.

We choose $v\in\X(s')$ so that $v(\phi(\gamma))\ne0$ but $v(\delta)=0$
for any $\delta\ne\phi(\gamma)$. This element can be written, for example, as follows:
$v=\nabla_{\phi(\gamma)_{|s'|}}\cdots\nabla_{\phi(\gamma)_2}\nabla_{\phi(\gamma)_1}(1)$,
where $\nabla_?$ is the operator of concentration defined in~\cite[Section 4.2]{scbs}.
By the commutativity of Diagram~(\ref{eq:z}), we get $f^*=\widetilde H((p,w,\phi))$.
Hence and from Diagram~(\ref{eq:zz}), we get the following commutative diagram:
$$
\begin{tikzcd}[column sep=6em]
H_T^\bullet(\SSigma_{s'})\arrow{r}{f^*}\arrow[hook]{d}&H_T^\bullet(\SSigma_{s})\arrow[hook]{d}\\
H_T^\bullet(\Gamma_{s'})\arrow{r}{?_{(p,w,\phi)}}&H_T^\bullet(\Gamma_{s})
\end{tikzcd}\qquad
$$
where the vertical arrow are the natural restrictions.

Let $u$ be the element of the upper left corner of this diagram that is mapped to $v$ by the left vertical arrow.
Starting from $u$ and going along the lower path, we get $w^{-1}(v\circ\phi(?))$ in the lower right corner.
Along the upper path, we get $f^*(u)$. To calculate this function, consider the following commutative diagram
of $T$-equivariant commutative maps:
$$
\begin{tikzcd}
\SSigma_s\arrow{r}{f}&\SSigma_{s'}\\
\Gamma_s\arrow{r}{f|_{\Gamma_s}}\arrow[hook]{u}&\Gamma_{s'}\arrow[hook]{u}
\end{tikzcd}
$$
Applying $H^\bullet_T$, we get the commutative diagram of cohomologies
$$
\begin{tikzcd}
H^\bullet_T(\SSigma_s')\arrow{r}{f^*}\arrow[hook]{d}&H^\bullet_T(\SSigma_s)\arrow[hook]{d}\\
H^\bullet_T(\Gamma_s')\arrow{r}{(f|_{\Gamma_s})^*}&H^\bullet_T(\Gamma_s)
\end{tikzcd}
$$
Hence $w^{-1}(v\circ\phi(?))=f^*(u)=(f|_{\Gamma_s})^*(v)=v\circ f$.
Evaluating both sides at $\gamma$, we get
\begin{equation}\label{eq:0}
w^{-1}(v\circ\phi(\gamma))=v\circ f(\gamma)
\end{equation}
Thus $\phi(\gamma)=f(\gamma)$, as otherwise we would get a contradiction $0\ne w^{-1}(v\circ\phi(\gamma))=v\circ f(\gamma)=0$.
So we get from~(\ref{eq:0}) that $v\circ\phi(\gamma)$ is $w$-stable.
However, we can replace $v$ by any product $bv$, where $b\in S\setminus 0$. If $w\ne e$, then Lemma~\ref{lemma:z} proves that
we can choose $v$ so that $v\circ\phi(\gamma)$ is not $w$-stable.
This contradiction proves that $w=e$.

Suppose now that the root system of $G$ is simply laced and~\ref{theorem:cat:cond:2} holds.
We are going to construct the functor $\BS'$ as follows.
For any sequence $s$ of simple roots, we set $\BS'(s)=\SSigma_s$ and for any morphism
$(p,e,\phi)$ of $\Fold$, we define $\BS'\big((p,e,\phi)\big)$ to be the morphism $\psi$ defined in the proof of Theorem~\ref{theorem:1}.

First, we need to check that $\BS'$ so defined is indeed a functor. If $(p,e,\phi)$ is an identity morphism, then it is obvious
that each map $\psi^\gamma$ defined in the proof of Theorem~\ref{theorem:1} is the identity map from $U^\gamma_s$ to itself.
Therefore their gluing $\psi$ is the identity map on $\SSigma_s$.

We consider now the composition of morphisms $(p,e,\phi):s\to s'$ and $(p',e,\phi'):s'\to s''$ of $\Seq'$.
Let $\psi^\gamma:U^\gamma_s\to U^{\phi(\gamma)}_{s'}$ be the maps for $(p,e,\phi)$,
$(\psi')^\delta:U^\delta_{s'}\to U^{\phi'(\delta)}_{s''}$ be the maps for $(p',e,\phi')$
and $\bar\psi^\gamma:U^\gamma_{s}\to U^{\phi'\phi(\gamma)}_{s''}$ be the maps for the
composition $(p'p,e,\phi'\phi)$ constructed
as in the proof of Theorem~\ref{theorem:1}. We claim that $\bar\psi^\gamma=(\psi')^{\phi(\gamma)}\psi^\gamma$.

Indeed, take a point $[c]^\gamma_s$ of $U^\gamma_s$. Let $\psi^\gamma([c]^\gamma_s)=[c']^{\phi(\gamma)}_{s'}$ and
$\psi^\gamma([c']^{\phi(\gamma)}_{s'})=[c'']^{\phi'\phi(\gamma)}_{s''}$ for the corresponding coordinates $c'_j$ and $c''_k$,
which are calculated by
$$
c'_j=\left\{
\begin{array}{ll}
\sigma_{\alpha'_{p(i)}}^{\phi(\gamma),p(i)}\sigma_{\alpha_i}^{\gamma,i}c_i&\text{ if }j=p(i);\\[6pt]
0&\text{ if }j\notin\im p,
\end{array}
\right.\qquad
c''_k=\left\{
\begin{array}{ll}
\sigma_{\alpha''_{p'(j)}}^{{\phi'\phi}(\gamma),p'(j)}\sigma_{\alpha'_j}^{\phi(\gamma),j}c'_j&\text{ if }k=p'(j);\\[6pt]
0&\text{ if }k\notin\im p',
\end{array}
\right.
$$
where $s_i=s_{\alpha_i}$, $s'_j=s_{\alpha'_j}$, $s''_k=s_{\alpha''_k}$ and the roots $\alpha_i,\alpha'_j,\alpha''_k$ are simple.
We have to calculate $c''_k$ via $c_i$.
From the above formulas, it is clear that $c''_k=0$ unless $k\in\im p'p$. In the last case, we denote $j=(p')^{-1}(k)$
and $i=p^{-1}(j)$. We have
$$
c''_k=\sigma_{\alpha''_{p'(j)}}^{{\phi'\phi}(\gamma),p'(j)}\sigma_{\alpha'_j}^{\phi(\gamma),j}c'_j
=\sigma_{\alpha''_{p'(j)}}^{{\phi'\phi}(\gamma),p'(j)}\sigma_{\alpha'_j}^{\phi(\gamma),j} \sigma_{\alpha'_{p(i)}}^{\phi(\gamma),p(i)}\sigma_{\alpha_i}^{\gamma,i}c_i
=\sigma_{\alpha''_{p'p(i)}}^{{\phi'\phi}(\gamma),p'p(i)}\sigma_{\alpha_i}^{\gamma,i}c_i,
$$
as required.

Let us check that Diagram~(\ref{eq:z}) is commutative. It is clearly so on the level of objects.
To check its commutativity on the level of morphisms, we first consider the left triangle of Diagram~(\ref{eq:z})
containing the dashed arrow. Let $p:s\to s'$ be a morphism of $\Seq$. By definition, it is a monotone map from $[1,r]$ to $[1,r']$
such that $\alpha_k=\alpha'_{p(k)}$, where $r=|s|$, $r'=|s'|$ and $s_k=s_{\alpha_k}$, $s'_j=s_{\alpha'_j}$ for
$\alpha_k,\alpha'_j\in\Pi$.

We have to restrict the morphism $\BS(p)$
to each chart $U_s^\gamma$ and prove that it coincides with the morphism $\psi^\gamma:U_s^\gamma\to U_{s'}^{\phi(\gamma)}$ defined
by $(p,e,\phi)$ as in Theorem~\ref{theorem:1}, where $\phi=\phi^p$ (see Section~\ref{widetildeH}). 
For any point $[c]^\gamma_s\in U^\gamma_s$, we get
\begin{multline*}
\BS(p)([c]^\gamma_s)
=[e,\ldots,e,\underbrace{x_{\gamma_1(-\alpha_1)}(c_1)\gamma_1}_{p(1)\text{th place}},e\ldots,e,\underbrace{x_{\gamma_r(-\alpha_r)}(c_r)\gamma_r}_{p(r)\text{th place}},e\ldots,e]\\
=[e,\ldots,e,\underbrace{x_{\phi(\gamma)_{p(1)}(-\alpha'_{p(1)})}(c_1)\phi(\gamma)_{p(1)}}_{p(1)\text{th place}},e\ldots,e,\underbrace{x_{\phi(\gamma)_{p(r)}(-\alpha'_{p(r)})}(c_r)\phi(\gamma)_{p(r)}}_{p(r)\text{th place}},e\ldots,e]
=[c']^{\phi(\gamma)}_{s'},
\end{multline*}
where
$$
c'_j=\left\{
\begin{array}{ll}
c_k&\text{ if }j=p(k);\\[6pt]
0&\text{ if }j\notin\im p.
\end{array}
\right.
$$
Comparing this formula with the one that defines $\psi^\gamma$ in the proof of Theorem~\ref{theorem:1}, we conclude that
it suffices to prove $\sigma_{\alpha'_{p(i)}}^{\phi(\gamma),p(i)}=\sigma_{\alpha_i}^{\gamma,i}$. This is however
clear from the definitions of $\sigma^{\gamma,i}_\alpha$ and $\phi=\phi^p$.

Finally, let us consider the right triangle of Diagram~(\ref{eq:z}) containing the dashed arrow.
We take any morphism $(p,e,\phi):s\to s'$ of $\Seq'$ and will prove that $\widetilde H((p,e,\phi))=\psi^*$,
where $\psi=\BS'((p,e,\phi))$. 
By (\ref{eq:zz}), we need to prove that the diagram
$$
\begin{tikzcd}[column sep=6em]
H_T^\bullet(\SSigma_{s'})\arrow{r}{\psi^*}\arrow[hook]{d}&H_T^\bullet(\SSigma_s)\arrow[hook]{d}\\
H_T^\bullet(\Gamma_{s'})\arrow{r}{?_{(p,e,\phi)}}&H_T^\bullet(\Gamma_s)
\end{tikzcd}
$$
is commutative. We have to prove that
$$
\psi^*(g)(\gamma)=(g|_{\Gamma_{s'}})_{(p,e,\phi)}(\gamma)=g(\phi(\gamma))
$$
for any $g\in H_T^\bullet(\SSigma_{s'})$ and $\gamma\in\Gamma_s$. This fact follows from the commutative diagram
$$
\begin{tikzcd}[column sep=6em]
H_T^\bullet(\SSigma_{s'})\arrow{r}{\psi^*}\arrow{d}&H_T^\bullet(\SSigma_s)\arrow{d}\\
H_T^\bullet(\{\phi(\gamma)\})\arrow[equal]{r}&H_T^\bullet(\{\gamma\})
\end{tikzcd}
$$
Here we identify the $T$-equivariant cohomologies of (different) points, as stipulated
in the introduction.
\end{proof}


\section{Appendix}

\subsection{Examples}

All examples below are for type $A_n$ with the simple roots
$\alpha_1,\ldots,\alpha_{n-1}$ ordered as in the following Dynkin diagram:

\begin{center}
\setlength{\unitlength}{1.2mm}
\begin{picture}(45,0)
\put(0,0){\circle{1}}
\put(10,0){\circle{1}}
\put(20,0){\circle{1}}
\put(40,0){\circle{1}}
\put(50,0){\circle{1}}
\put(0.5,0){\line(1,0){9}}
\put(10.5,0){\line(1,0){9}}
\put(40.5,0){\line(1,0){9}}
\put(20.5,0){\line(1,0){4}}
\put(39.5,0){\line(-1,0){4}}

\put(27.25,0){\circle{0}}
\put(30,0){\circle{0}}
\put(32.75,0){\circle{0}}

\put(-1,-3.5){$\scriptstyle\alpha_1$}
\put(9,-3.5){$\scriptstyle\alpha_2$}
\put(19,-3.5){$\scriptstyle\alpha_3$}
\put(49,-3.5){$\scriptstyle\alpha_{n-1}$}
\put(39,-3.5){$\scriptstyle\alpha_{n-2}$}
\end{picture}
\end{center}

\hspace{20mm}

\noindent
The simple reflections are $s_i=s_{\alpha_i}$.

\begin{example}\label{Ex:3}{\rm By definition, for any morphism $p:s\to s'$ of $\Seq$, we have $s'_{p(i)}=s_i$.
This is not in general true for morphisms of $\Fold$, as is shown by the following example: $s=(s_1,s_2)$,
$s'=(s_1,s_2,s_1)$, $p(1)=1$, $p(2)=3$ and $\phi$ is given by
$$
(e  ,e )\stackrel\phi\mapsto(s_1,s_2,e),\quad
(s_1,e)\stackrel\phi\mapsto(e,s_2,e),\quad
(e  ,s_2)\stackrel\phi\mapsto(s_1,s_2,s_1),\quad
(s_1,s_2)\stackrel\phi\mapsto(e,s_2,s_1).
$$
The triple $(p,e,\phi):s\to s'$ is a morphism of $\Fold$ having identical rotation and sign $(-1,1)$.
}
\end{example}

In the following examples, the maps between sets of fixed points are given only for one gallery,
as the remaining values can be reconstructed by condition~\ref{F:3} of Section~\ref{morph}.

\begin{example}\label{Ex:4}{\rm Let $s=(s_2,s_1)$, $s'=(s_1,s_2,s_1)$, $p(1)=1$, $p(2)=3$, $\phi((e,e))=(s_1,s_2,e)$
and $w=s_1s_2s_1$. Then $(p,w,\phi):s\to s'$ is a morphism of $\Fold$ of sign $(1,-1)$.}\end{example}

\begin{example}\label{Ex:5}{\rm Let $s=(s_1,s_2,s_3)$, $s'=(s_1,s_2,s_1,s_3,s_2)$, $p(1)=1$, $p(2)=3$, $p(3)=5$
and $\phi((e,e,e))=(s_1,s_2,e,s_3,e)$. Then $(p,e,\phi)$ is a morphism of $\Fold$ of sign $(-1,1,1)$.}
\end{example}

\begin{example}\label{Ex:6}{\rm Let $s=(s_2,s_3,s_1)$, $s'=(s_3,s_4,s_2,s_4,s_4)$, $p(1)=1$, $p(2)=3$, $p(3)=5$,
$w=(1,5)(2,3,4)$ and $\phi((e,e,e))=(s_3,s_4,s_2,s_4,e)$.
Then $(p,w,\phi)$ is a morphism of $\Fold$ of sign $(-1,1,-1)$.}
\end{example}

\begin{example}\label{Ex:7}{\rm Let $s=(s_4, s_3, s_3)$, $s'=(s_4, s_3, s_2, s_3, s_2, s_1)$,
$p(1)=1$, $p(2)=2$, $p(3)=5$ and $\phi((e,e,e))=(e, e, s_2, s_3, e, s_1)$.
All pairs $\gamma$ and $\f_i\gamma$ of $T$-fixed points of $\BS(s)$ are connected by $T$-curves
except the following two: $(e, e, s_3)$ and $(e, s_3, s_3)$; $(s_4, e, s_3)$ and $(s_4, s_3, s_3)$.
The triple $(p,e,\phi)$ is a topological morphism of $\Fold$.}
\end{example}

Note that the morphisms in Examples~\ref{Ex:3}--\ref{Ex:6} are not topological either because
of the wrong sign or because the triples $(p,e,\phi)$ are not morphisms of $\Fold$ (if $w\ne e$).
The following example shows that a morphism of $\Fold$ can be not topological because
it does not preserve $T$-curves.

\begin{example}\label{Ex:8}{\rm Let $s=(s_1,s_4,s_3)$, $s'=(s_1, s_4, s_4, s_1, s_3, s_4)$,
$p(1)=1$, $p(2)=3$, $p(3)=5$ and $\phi((e,e,e))=(e, e, e, s_1, e, s_4)$. Then
$(p,e,\phi)$ is a morphism of $\Fold$ of sign $(1,1,1)$, which is not topological.
Actually, this morphism is not weakly curve preserving as the points $\gamma$ and $\delta=\f_i\gamma$
are connected by a $T$-curve on $\BS(s)$ but the points $\phi(\gamma)$ and $\phi(\delta)$
are not connected by a $T$-curve on $\BS(s')$ in the following six cases:
$\gamma=(e,e,e)$, $\delta=(s_1,e,e)$; $\gamma=(e,e,e)$, $\delta=(e,s_4,e)$;
$\gamma=(e,e,s_3)$, $\delta=(s_1,e,s_3)$; $\gamma=(e,s_4,s_3)$, $\delta=(s_1,s_4,s_3)$;
$\gamma=(s_1,e,e)$, $\delta=(s_1,s_4,e)$; $\gamma=(e,s_4,e)$, $\delta=(s_1,s_4,e)$.
}
\end{example}

\subsection{Stabilization phenomenon} We are going to prove here that for any two $(p,w)$-pairs
$(\gamma,\delta)$ and $(\rho,\delta)$ with $\gamma\in\Gamma_s$ and $\rho\in\Gamma_t$, we get $s=t$.
To this end, we need the following result. In its proof, $\widehat x$ means the omission of
the factor $x$ in a product.

\begin{lemma}\label{lemma:1:new} Let $s=(s_1,\ldots,s_n)$ and $t=(t_1,\ldots,t_n)$ be two sequences
of simple reflections and $\gamma\in\Gamma_s,\rho\in\Gamma_t$ be two galleries.
Suppose that $\gamma^is_i(\gamma^i)^{-1}=\rho^i t_i(\rho^i)^{-1}$
for any $i=1,\ldots,n$. Then $s=t$.
\end{lemma}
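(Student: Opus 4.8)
The plan is to translate the hypothesis into root language. Writing $s_i=s_{\alpha_i}$ and $t_i=s_{\alpha_i'}$ with $\alpha_i,\alpha_i'\in\Pi$, I first note that $\gamma^is_i(\gamma^i)^{-1}=\gamma^{i-1}s_i(\gamma^{i-1})^{-1}=s_{\gamma^{i-1}\alpha_i}$ (the first equality because $\gamma_i\in\{e,s_i\}$ and $s_i$ conjugates itself trivially), and the same on the $\rho$-side; in the notation of Section~\ref{BS} these reflections are $s_{\bbeta_i(\gamma)}$ and $s_{\bbeta_i(\rho)}$. Hence the hypothesis is exactly $s_{\gamma^{i-1}\alpha_i}=s_{\rho^{i-1}\alpha_i'}$, i.e. $\gamma^{i-1}\alpha_i=\pm\,\rho^{i-1}\alpha_i'$ for every $i$. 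The goal $s=t$ is the statement $\alpha_i=\alpha_i'$ for all $i$, which I prove by induction on $i$. For $i=1$ we have $\gamma^0=\rho^0=e$, so $\alpha_1=\pm\alpha_1'$, and both being positive gives $\alpha_1=\alpha_1'$.

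For the inductive step assume $\alpha_j=\alpha_j'$ for all $j<i$; then $s_j=t_j$, so both $\gamma^{i-1}$ and $\rho^{i-1}$ lie in $W_J$, $J=\{\alpha_1,\dots,\alpha_{i-1}\}\subset\Pi$. Rewriting the step-$j$ relation with $s_j=t_j=s_{\alpha_j}$ shows that $h_{j-1}:=(\gamma^{j-1})^{-1}\rho^{j-1}$ commutes with $s_{\alpha_j}$ for $1\le j\le i-1$; since $\gamma_j,\rho_j\in\{e,s_{\alpha_j}\}$ this also yields $h_j=h_{j-1}s_{\alpha_j}^{\mu_j}=s_{\alpha_j}^{\mu_j}h_{j-1}$ with $\mu_j\in\{0,1\}$ (and $h_0=e$). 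I now prove, by a secondary induction on $j\le i-1$, the structural statement that $h_j=\prod_{\gamma\in\Omega_j}s_\gamma$ for a set $\Omega_j\subset\Pi$ of \emph{pairwise orthogonal} simple roots; such an $h_j$ is an involution acting by $-1$ on $V_j=\mathrm{span}\,\Omega_j$ and by $+1$ on $V_j^\perp$. Starting from $\Omega_0=\emptyset$: if $\mu_j=0$ nothing changes, while if $\mu_j=1$ the commutation $h_{j-1}\alpha_j=\pm\alpha_j$ forces either $\alpha_j\perp V_{j-1}$, in which case $\Omega_j=\Omega_{j-1}\cup\{\alpha_j\}$, or $\alpha_j\in V_{j-1}$; but a simple root lying in the span of the pairwise orthogonal distinct simple roots of $\Omega_{j-1}$ must be one of them, say $\gamma_0$, and then the factor cancels, $\Omega_j=\Omega_{j-1}\setminus\{\gamma_0\}$. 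In either case $\Omega_j$ stays pairwise orthogonal.

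Applying this with $j=i-1$, the element $w:=h_{i-1}$ acts by $-1$ on $V=\mathrm{span}\,\Omega_{i-1}$ and by $+1$ on $V^\perp$, and $w^{-1}=w$. The step-$i$ relation, rewritten via $h_{i-1}$, reads $w\alpha_i=\pm\alpha_i'$. Now $w\alpha_i=\alpha_i-\sum_{\gamma\in\Omega_{i-1}}\tfrac{2(\alpha_i,\gamma)}{(\gamma,\gamma)}\gamma$. If $\alpha_i\notin\Omega_{i-1}$, then, since distinct simple roots satisfy $(\alpha_i,\gamma)\le0$, every coefficient above is $\ge0$ and the coefficient of $\alpha_i$ is $1$, so $w\alpha_i$ is a positive root; being $\pm$ a simple root it must equal $\alpha_i$. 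If instead $\alpha_i=\gamma\in\Omega_{i-1}$, then $w\alpha_i=-\alpha_i$. In both cases $w\alpha_i=\pm\alpha_i$, whence $\alpha_i'=\alpha_i$, completing the induction and the proof.

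I expect the genuine difficulty to be concentrated in the repeated-reflection situation, and this is where a naive argument fails: a single wall relation $w\alpha_i=\pm\alpha_i'$ with $w\in W_J$ does \emph{not} by itself imply $\alpha_i=\alpha_i'$ (for example $s_1s_2$ carries $\alpha_1$ to $\alpha_2$ in type $A_2$). What saves the statement is that \emph{all} the wall relations hold at once and that every $s_j,t_j$ is truly a simple reflection; the orthogonality invariant $\Omega_j$ is the device that packages this global rigidity, and the delicate point is exactly to check that this invariant survives the cancellations $\alpha_j\in V_{j-1}$ that occur when a simple reflection is repeated.
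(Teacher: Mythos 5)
Your proof is correct, and it takes a genuinely different route from the paper's. The paper argues by induction on the length $n$: it picks a maximal prefix $[1,k]$ on which $s_i=t_i$, the galleries disagree ($\gamma_i\ne\rho_i$ for $i<k$) and the reflections pairwise commute, and then, in each of the cases $\gamma_k=\rho_k$ and $\gamma_k\ne\rho_k$, deletes one index (or a pair of indices carrying a repeated simple root) from both sequences and applies the inductive hypothesis to the shorter sequences. You instead run a single forward induction on the position $i$, carrying the discrepancy element $h_j=(\gamma^j)^{-1}\rho^j$ together with the structural invariant that it is a product of reflections in pairwise orthogonal simple roots: cancellations ($\mu_j=0$), orthogonal extensions, and repeated roots are all absorbed as updates of the set $\Omega_j$, and the concluding positivity argument (distinct simple roots have nonpositive inner product, and $\Pi$ is linearly independent) forces $\alpha_i'=\alpha_i$. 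The underlying rigidity exploited is the same: your orthogonality invariant is the global counterpart of the paper's commuting-prefix conditions (i)--(iii), and your expansion of $w\alpha_i$ plays exactly the role of the paper's root identity (\ref{eq:7}). What your organization buys is the elimination of the outer induction on $n$ and of the deletion bookkeeping (which in the paper requires re-verifying the hypotheses for the shortened sequences in two separate cases), plus a stronger, reusable intermediate fact, namely the explicit orthogonal-involution form of $(\gamma^j)^{-1}\rho^j$; the paper's version, by contrast, never needs to describe the structure of this element and stays entirely within statements about sequences. One cosmetic point: you overload $\gamma$ as both the gallery and a generic element of $\Omega_j$; rename one of them to avoid the clash with the paper's notation.
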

\begin{proof} Note that we actually have
\begin{equation}\label{eq:-3:new}
\gamma^{i-1}s_i(\gamma^{i-1})^{-1}=\rho^{i-1}t_i(\rho^{i-1})^{-1}.
\end{equation}
We apply the induction on $n$, the case $n=1$ being trivial.

Let $n>1$ and we suppose that the lemma is true for smaller values of this parameter.
We write $s_i=s_{\alpha_i}$ and $t_i=s_{\tau_i}$ for the corresponding simple roots $\alpha_i$
and $\tau_i$.

We choose the maximal $k=1,\ldots n$ such that the following conditions hold:
{\renewcommand{\labelenumi}{{\rm \theenumi}}
\renewcommand{\theenumi}{{\rm(\roman{enumi})}}
\begin{enumerate}
\item\label{P:1} $s_i=t_i$ for $1\le i\le k$;
\item\label{P:2} $\gamma_i\ne\rho_i$ for $1\le i<k$;
\item\label{P:3} $s_i$ and $s_j$ commute for $1\le i<j\le k$.
\end{enumerate}}
As we have already seen that $s_1=t_1$, the value $k=1$
satisfies all three conditions. Thus our $k$ is well-defined.
Moreover, we assume that $k<n$, as otherwise we would be done.

First suppose that $\gamma_k=\rho_k$. For $i=k+1,\ldots,n$, we get
$$
\gamma_1\cdots\widehat\gamma_k\cdots\gamma_{i-1}s_i\gamma_{i-1}\cdots\widehat\gamma_k\cdots\gamma_1=
\rho_1\cdots\widehat\rho_k\cdots\rho_{i-1}t_i\rho_{i-1}\cdots\widehat\rho_k\cdots\rho_1.
$$
by~(\ref{eq:-3:new}) and~\ref{P:3}. These equalities together with~(\ref{eq:-3:new}) for $i=1,\ldots,k-1$ imply
$s_i=t_i$ for $i\in[1,n]\setminus\{k\}$ by the inductive hypothesis. However $s_k=t_k$ by~\ref{P:1}.

Now suppose that $\gamma_k\ne\rho_k$. By~\ref{P:1}, we have $\gamma_i\rho_i=s_i$ for any $i=1,\ldots,k$.
Thus~(\ref{eq:-3:new}) for $i=k+1$ can be written as follows:
\begin{equation}\label{eq:6}
s_1s_2\cdots s_ks_{k+1}s_k\cdots s_2s_1=t_{k+1}.
\end{equation}
By~\ref{P:3}, we can write this equality in terms of roots as follows:
\begin{equation}\label{eq:7}
\alpha_{k+1}-\<\alpha_{k+1},\alpha_1\>\alpha_1\pm\cdots\pm\<\alpha_{k+1},\alpha_k\>\alpha_k=\pm\tau_{k+1}.
\end{equation}

If $\alpha_{k+1}=\alpha_j$ for some $j=1,\ldots,k$, then $s_{k+1}=s_j$. This element commutes with all elements $s_1,\ldots,s_k$.
By~(\ref{eq:6}), we get $s_{k+1}=t_{k+1}$.
Hence conditions~\ref{P:1}--\ref{P:3} hold for $k+1$ instead of $k$, which contradicts its maximality.

We can suppose now that $\alpha_{k+1}\ne\alpha_j$ for $j=1,\ldots,k$.
It follows from~(\ref{eq:7}) that $\alpha_{k+1}=\tau_{k+1}$.
and $s_{k+1}=t_{k+1}$. As $k$ is maximal satisfying conditions~\ref{P:1}--\ref{P:3}, the reflection $s_{k+1}$
does not commute with some $s_j$, where $j\le k$. Hence $\<\alpha_{k+1},\alpha_j\>\ne0$.
For~(\ref{eq:7}) to hold, there must exist one more occurrence of $\alpha_j$ among $\alpha_1,\ldots,\alpha_k$.
That is $\alpha_j=\alpha_q$, where without loss of generality $1\le j<q\le k$.

Let $i=q+1,\ldots,n$. By~(\ref{eq:-3:new}) and~\ref{P:3}, we get
$$
s_1\cdots s_q\gamma_{q+1}\cdots \gamma_{i-1}
s_i
\gamma_{i-1}\cdots\gamma_{q+1}s_q\cdots s_1
=\rho_{q+1}\cdots\rho_{i-1}
t_i
\rho_{i-1}\cdots\rho_{q+1}.
$$
As $s_q=s_j$, we get again by~\ref{P:3} that
$$
s_1\cdots\widehat s_j\cdots s_{q-1}\gamma_{q+1}\cdots \gamma_{i-1}
s_i
\gamma_{i-1}\cdots\gamma_{q+1}s_{q-1}\cdots\widehat s_j\cdots s_1
=\rho_{q+1}\cdots\rho_{i-1}
t_i
\rho_{i-1}\cdots\rho_{q+1},
$$
Applying~\ref{P:3} once more, we get
\begin{equation}\label{eq:-4}
\begin{array}{l}
\gamma_1\cdots\widehat\gamma_j\cdots\widehat\gamma_q\cdots\gamma_{i-1}
s_i\gamma_{i-1}\cdots\widehat\gamma_q\cdots\widehat\gamma_j\cdots\gamma_1=\\[6pt]
\hspace{180pt}=
\rho_1\cdots\widehat\rho_j\cdots\widehat\rho_q\cdots\rho_{i-1}
t_i\rho_{i-1}\cdots\widehat\rho_q\cdots\widehat\rho_j\cdots\rho_1.
\end{array}
\end{equation}
Now let $i=j+1,\ldots,q-1$. By~\ref{P:3} and~\ref{P:1}, we get
$$
\gamma_1\cdots\widehat\gamma_j\cdots\gamma_{i-1}
s_i\gamma_{i-1}\cdots\widehat\gamma_j\cdots\gamma_1=
\rho_1\cdots\widehat\rho_j\cdots\rho_{i-1}
t_i\rho_{i-1}\cdots\widehat\rho_j\cdots\rho_1.
$$
These equalities together with equalities~(\ref{eq:-4}) for $i=q+1,\ldots,n$ and equalities~(\ref{eq:-3:new}) for $i=1,\ldots,j-1$ imply
$s_i=t_i$ for $i\in[1,n]\setminus\{j,q\}$ by the inductive hypothesis.
However $s_j=t_j$ and $s_q=t_q$ by~\ref{P:1}.
\end{proof}

\begin{corollary}\label{corollary:2}
Let $s$, $\tilde s$, $s'$ be sequences of simple reflections such that $|s|=|\tilde s|\le |s'|$.
Let $\delta\in\Gamma_{s'}$, $p:[1,|s|]\to[1,|s'|]$ be a monotone embedding and $w\in W$.
If there exist two morphisms $(p,w,\phi):s\to s'$ and $(p,w,\widetilde\phi):\tilde s\to s'$ such that
$\delta\in\im\phi\cap\im\widetilde\phi$,
then $s=\tilde s$ and there exists an isomorphism $(\id,e,\psi):s\to s$ such that the following diagram is commutative:
$$
\begin{tikzcd}
s\arrow{rr}{(\id,e,\psi)}\arrow{dr}[swap]{(p,w,\phi)}&&\arrow{dl}{(p,w,\tilde\phi)}s\\
  &s'
\end{tikzcd}
$$
\end{corollary}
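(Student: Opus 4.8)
The plan is to realize $\delta$ by explicit galleries under both $\phi$ and $\widetilde\phi$, feed the resulting wall data into Lemma~\ref{lemma:1:new} to force $s=\tilde s$, and then build $\psi$ through the universal construction of Lemma~\ref{lemma:0}, checking commutativity and invertibility via the uniqueness clause of that lemma. The substantive content ($s=\tilde s$) is already packaged in Lemma~\ref{lemma:1:new}, so I expect no new hard computation; the only thing to watch is recognizing that every remaining verification reduces to uniqueness in Lemma~\ref{lemma:0}.

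First I would choose $\gamma\in\Gamma_s$ with $\phi(\gamma)=\delta$ and $\rho\in\Gamma_{\tilde s}$ with $\widetilde\phi(\rho)=\delta$, which exist since $\delta\in\im\phi\cap\im\widetilde\phi$. Writing $n=|s|=|\tilde s|$ and spelling out the group-theoretic form of the $(p,w)$-pair conditions for $(\gamma,\delta)$ and $(\rho,\delta)$, for every $i=1,\ldots,n$ both read
$$
\delta^{p(i)}s'_{p(i)}(\delta^{p(i)})^{-1}=w\gamma^is_i(\gamma^i)^{-1}w^{-1},\qquad \delta^{p(i)}s'_{p(i)}(\delta^{p(i)})^{-1}=w\rho^i\tilde s_i(\rho^i)^{-1}w^{-1}.
$$
Equating the right-hand sides and conjugating by $w^{-1}$ gives $\gamma^is_i(\gamma^i)^{-1}=\rho^i\tilde s_i(\rho^i)^{-1}$ for all $i$, which is exactly the hypothesis of Lemma~\ref{lemma:1:new} (with $t=\tilde s$); hence $s=\tilde s$. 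In particular $\rho\in\Gamma_s$, and reading the same equalities in the $\bbeta$-form yields $\bbeta_i(\rho)=\pm\bbeta_i(\gamma)$ for all $i$, so $(\gamma,\rho)$ is an $(\id,e)$-pair.

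Next I would apply Lemma~\ref{lemma:0} to this $(\id,e)$-pair to obtain the unique map $\psi:\Gamma_s\to\Gamma_s$ with $\psi(\gamma)=\rho$ for which $(\id,e,\psi):s\to s$ is a morphism of $\Fold$. Commutativity of the triangle means $\widetilde\phi\circ\psi=\phi$. By the composition formula, $(p,w,\widetilde\phi)\circ(\id,e,\psi)=(p,w,\widetilde\phi\psi)$; both this and $(p,w,\phi)$ are morphisms $s\to s'$ with the same $p$ and $w$, and they agree on the single gallery $\gamma$ since $\phi(\gamma)=\delta=\widetilde\phi(\rho)=\widetilde\phi(\psi(\gamma))$. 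The uniqueness in Lemma~\ref{lemma:0} then forces $\phi=\widetilde\phi\psi$, which is the required commutativity.

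Finally I would verify that $(\id,e,\psi)$ is an isomorphism. Since $\bbeta_i(\gamma)=\pm\bbeta_i(\rho)$ is symmetric, $(\rho,\gamma)$ is also an $(\id,e)$-pair, so Lemma~\ref{lemma:0} produces $(\id,e,\psi')$ with $\psi'(\rho)=\gamma$. The composite $(\id,e,\psi'\psi)$ sends $\gamma\mapsto\psi'(\rho)=\gamma$, hence agrees with the identity morphism $(\id,e,\id_{\Gamma_s})$ on $\gamma$ and equals it by uniqueness; symmetrically $(\id,e,\psi\psi')$ fixes $\rho$ and is the identity. Thus $(\id,e,\psi)$ is invertible, completing the argument.
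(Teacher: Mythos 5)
Your proposal is correct and takes essentially the same route as the paper: realize $\delta$ under both $\phi$ and $\widetilde\phi$, equate the two $(p,w)$-pair conditions and feed them into Lemma~\ref{lemma:1:new} to get $s=\tilde s$, then build $\psi$ from the resulting $(\id,e)$-pair via Lemma~\ref{lemma:0}. The only divergence is in the final bookkeeping: the paper proves commutativity by extending agreement at $\gamma$ through condition~\ref{F:3} and gets invertibility from injectivity (Lemma~\ref{lemma:3}) plus finiteness of $\Gamma_s$, whereas you obtain both from the uniqueness clause of Lemma~\ref{lemma:0} --- comparing $(p,w,\widetilde\phi\psi)$ with $(p,w,\phi)$, and constructing $\psi'$ from the reversed pair so that both composites equal the identity morphism --- which is an equally valid, slightly more self-contained packaging of the same idea.
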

\begin{proof} Let us write $\delta=\phi(\gamma)$ and $\delta=\widetilde\phi(\widetilde\gamma)$ for the corresponding
galleries $\gamma\in\Gamma_s$ and $\widetilde\gamma\in\Gamma_{\tilde s}$.
As $(\gamma,\delta)$ and $(\widetilde\gamma,\delta)$ are both $(p,w)$-pairs, the definition of Section~\ref{pw},
implies
$$
\gamma^is_i(\gamma^i)^{-1}=w^{-1}\delta^{p(i)}s'_{p(i)}(\delta^{p(i)})^{-1}w=\widetilde\gamma^i\tilde s_i(\widetilde\gamma^i)^{-1}.
$$
Hence $s=\tilde s$ by Lemma~\ref{lemma:1:new}. The above formula implies that $(\gamma,\widetilde\gamma)$ is an $(\id,e)$-pair.
By Lemma~\ref{lemma:0}, there exists a map $\psi:\Gamma_s\to\Gamma_s$ such that $\psi(\gamma)=\widetilde\gamma$
and $(\id,e,\psi):s\to s$ is a morphism of $\Fold$.
The equality $\widetilde\phi\psi(\gamma)=\phi(\gamma)$ can be extended to the equality of compositions $\widetilde\phi\psi=\phi$
by condition~\ref{F:3} of Section~\ref{morph}.

By Lemma~\ref{lemma:3}, the map $\psi$ is bijective. It is easy to prove that $(\id,e,\psi^{-1})$
is a morphism of $\Fold$, which is clearly inverse to $(\id,e,\psi)$
\end{proof}

\end{document}